\let\cal\mathcal
\newtheorem{theorem}{Theorem}
\newtheorem{lemma}[theorem]{Lemma}
\newtheorem{corollary}[theorem]{Corollary}
\newtheorem{proposition}[theorem]{Proposition}
\theoremstyle{remark}
\newtheorem{remark}[theorem]{Remark}
\theoremstyle{definition}
\newtheorem{definition}[theorem]{Definition}
\theoremstyle{remark}
\newtheorem{example}[theorem]{Example}
\numberwithin{equation}{section}
\numberwithin{theorem}{section}
\def\C{\mathbb C}
\def\Z{\mathbb Z}
\def\R{\mathbb R}
\def\M{\cal{M}}
\def\H{\cal{H}}
\def\ch{\raise 0.5ex \hbox{$\chi$}}
\def\T{\tau}
\def\E{\cal{E}}
\let\union\bigcup
\let\epsilon\varepsilon
\def\Re{\operatorname{Re}}
\renewcommand{\a}{\alpha}
\renewcommand{\b}{\beta}
\newcommand{\g}{\gamma}
\renewcommand{\l}{\lambda}
\newcommand{\s}{\sigma}
\newcommand{\N}{\cal{N}}
\newcommand{\h}{\mathsf{h}}
\newcommand{\alg}{\mathrm{aa}}
\newcommand{\bmo}{\mathsf{bmo}}
\newcommand{\BMO}{{\mathcal {BMO}}}
\begin{document}

\title[Interpolation]{Interpolation between  noncommutative martingale Hardy  and BMO spaces: the case $\mathbf{0<p<1}$}

\author[Randrianantoanina]{Narcisse Randrianantoanina}
\address{Department of Mathematics, Miami University, Oxford,
Ohio 45056, USA}
 \email{randrin@miamioh.edu}

\date{\today}

\subjclass{Primary: 46L53,  46B70.  Secondary: 46L52, 46A13, 46A16, 60G50}
\keywords{Noncommutative martingales; martingale Hardy spaces; interpolation spaces}

\begin{abstract}  Let $\M$ be a semifinite von Nemann algebra equipped with an increasing filtration $(\M_n)_{n\geq 1}$  of (semifinite) von Neumann subalgebras  of $\M$.  For
  $0<p <\infty$,  let $\h_p^c(\M)$  denote the  noncommutative  column  conditioned martingale Hardy space  and $\bmo^c(\M)$ denote the column \lq\lq little\rq\rq  \  martingale BMO space associated with  the filtration $(\M_n)_{n\geq 1}$.

We prove the following real interpolation identity:
 if $0<p <\infty$ and   $0<\theta<1$, then  for   $1/r=(1-\theta)/p$, 
  \[
  \big(\h_p^c(\M), \bmo^c(\M)\big)_{\theta, r}=\h_{r}^c(\M),
  \]
  with equivalent quasi norms.

For the case of complex interpolation, we obtain that if $0<p<q<\infty$  and $0<\theta<1$, then  for $1/r =(1-\theta)/p +\theta/q$,
\[
 \big[\h_p^c(\M), \h_q^c(\M)\big]_{\theta}=\h_{r}^c(\M)
\]
with equivalent quasi norms. 

These  extend  previously known results from  $p\geq 1$ to the full range $0<p<\infty$. Other related spaces   such as spaces of adapted sequences  and Junge's noncommutative conditioned $L_p$-spaces are also  shown to form interpolation scale for the  full range $0<p<\infty$ when either the real method or the complex  method is used.  Our method of proof is based on  a new algebraic atomic decomposition for Orlicz space version of 
Junge's noncommutative conditioned $L_p$-spaces. 

We apply these results to derive various inequalities for martingales in noncommutative  symmetric  quasi-Banach spaces.

\end{abstract}

\maketitle

\section{Introduction}

Hardy space theory takes on many forms and appears in many aspects of mathematics such as harmonic analysis, PDE's, functional analysis, probability theory, and many others. Interpolation spaces  between Hardy spaces in various contexts have a long history.  We refer to the articles \cite{Fefferman-Stein, Jones, Pisier-int} for some background on
interpolations between classical Hardy spaces from harmonic analysis and \cite{Jason-Jones,  Weisz} for  interpolations between Hardy spaces from martingale theory.  On the other hand, the theory of noncommutative martingales has  seen rapid development in many directions. Indeed,  since the establishment of the noncommutative Burkholder-Gundy inequalities in \cite{PX}, many classical inequalities are now understood for this context.  We  refer to the book \cite[Chap.~14]{Pisier-Martingale} for  a summary of some of the  main  inequalities from noncommutative martingale theory.
Further references relevant to our purpose are  \cite{Bekjan-Chen-Perrin-Y,  Chen-Ran-Xu, Ju, JX, Musat-inter, Ran-Wu-Zhou}.
The main focus of the present article is on Hardy spaces arising from noncommutative martingale theory. More specifically,  column/row Hardy  spaces defined from column/row conditioned square functions initiated by Junge and Xu   in \cite{JX} in connection with the noncommutative Burkholder/Rosenthal inequalities. These spaces are generally  referred to  as conditioned Hardy spaces and its column (resp.  row) version  is usually  denoted by $\h_p^c$ (resp.  $\h_p^r$). We would like to  emphasize that  this  particular class  of martingale Hardy spaces  is  instrumental in   classical theory. We refer to  the monograph \cite{Weisz} for more in depth treatment of the classical setting. Likewise,   the Hardy spaces $\h_p^c$ and $\h_p^r$ are  proven to be fundamental objects  in  various aspects of the new progress made in the noncommutative martingale theory during  the last several years. For instance, the formulation of the  noncommutative Burkholder inequality for the case $1<p<2$  was mainly  due to  a reformulation of the case $2\leq p<\infty$ as equivalence of norms involving  the spaces $\h_p^c$ and $\h_p^r$. The asymmetric Doob maximal inequalities in \cite{Hong-Junge-Parcet}  were derived from properties of $\h_p^c$ and $\h_p^r$ where $1\leq p<2$.
The general theme of the present article is on interpolation spaces  for the quasi-Banach  space couple   $(\h_p^c, \h_q^c)$ when  $0<p<q<\infty$.

To motivate our consideration, let us review some interpolation results from classical martingale theory. Let $(\Omega, \cal{F},\mathbb{P})$ be a probability space and $(\cal{F}_n)_{n\geq 1}$ be  an increasing sequence of $\sigma$-subfields of $\cal{F}$  satisfying  the condition $\cal{F}=\sigma(\union_{n\geq 1} \cal{F}_n)$.
For $0<p \leq \infty$, denote by $\H_p(\Omega)$ (resp. $\h_p(\Omega)$) the martingale Hardy space  defined by square functions (resp. conditioned square functions)  and $\BMO(\Omega)$  (resp. $\bmo(\Omega)$) the martingale BMO  (resp. martingale little BMO) space associated with the filtration $(\cal{F}_n)_{n\geq 1}$. We refer to \cite{GA,Weisz} for precise definitions  and properties of these spaces along with discussions on their importance for the classical theory. It is well established in the literature that the spaces $\BMO(\Omega)$ and $\bmo(\Omega)$ play important  role in interpolation
theory  as  they may be used as  natural substitutes for $L_\infty(\Omega)$.
Our motivation comes from  the following  three classical   results  that involved  martingale BMO spaces as one of the endpoints of interpolations:
 
\begin{equation}\label{classical-BMO}
\big[\H_1(\Omega) , \BMO(\Omega)\big]_{\theta} = \H_r(\Omega), \quad 0<\theta<1  \ \text{and}\  \frac{1}{r}= 1-\theta;
\end{equation}

\begin{equation}\label{H-BMO}
\big[\H_p(\Omega) , \BMO(\Omega) \big]_{\theta} \subsetneq \H_r(\Omega), \quad  0<p<1,\ 0<\theta<1,\  \text{and} \  \frac{1}{r}= \frac{1-\theta}{p}<1;
\end{equation}

\begin{equation}\label{h-BMO}
\big(\h_p(\Omega) , \bmo(\Omega) \big)_{\theta, r} = \h_r(\Omega), \quad   0<p<\infty, \  0<\theta<1, \ \text{and}\  \frac{1}{r}= \frac{1-\theta}{p},
\end{equation}
where $[\cdot,\cdot]_\theta$ (resp. $(\cdot,\cdot)_{\theta, r}$) denotes  the complex (resp. real) interpolation method.  The identity \eqref{classical-BMO}  and the  inclusion  \eqref{H-BMO} were   obtained by Janson and Jones in \cite{Jason-Jones} while \eqref{h-BMO}  was established in its present form  by Weisz in \cite{Weisz3}.
It is a natural question to consider if the three assertions stated above still  hold for the noncommutative setting. The first result in this direction is due to Musat in  \cite{Musat-inter} who proved  a  noncommutative analogue of \eqref{classical-BMO} where as introduced  in \cite{PX}, the noncommutative Hardy space $\H_1$ is the sum  of the column version and the row version and the BMO space is defined as the intersection of column BMO and the row BMO. Later, Bekjan  {\it et al.} established in \cite{Bekjan-Chen-Perrin-Y} that the noncommutative analogue of \eqref{h-BMO} holds  for the Banach space range. That is,   \eqref{h-BMO} remains valid for the interpolation couple $(\h_1^c, \bmo^c)$. To the best of our knowledge,  \cite{Bekjan-Chen-Perrin-Y} and \cite{Musat-inter}  are the only articles available in the literature that contain substantial  advances in the study of interpolation spaces of  noncommutative martingale Hardy spaces to date. 
The present paper extends the interpolation results from \cite{Bekjan-Chen-Perrin-Y}  to the case $0<p<1$ thus providing a full  noncommutative generalization of \eqref{h-BMO}  for column/row spaces. We refer to  Theorem~\ref{main-interpolation} below for  detailed  formulation.  We also obtained  interpolation spaces between  spaces of adapted sequences which may be viewed as the right substitute for \eqref{classical-BMO}  when $0<p<1$. This is stated in Theorem~\ref{interpolation-adapted}. We should point out  that the  result  on the spaces of adapted sequences appears to be new even for the classical setting. Moreover,  the noncommutative analogue of \eqref{H-BMO} can be easily deduced from the  result on adapted sequences.

Our method of proof is very different from \cite{Bekjan-Chen-Perrin-Y, Musat-inter}. In fact, the techniques used in  both \cite{Bekjan-Chen-Perrin-Y} and \cite{ Musat-inter} appear to work only for Banach couples.  Our main objective   is  to  compare  $K$-functionals for the couple $(\h_p^c, \h_q^c)$ where $0<p<q<\infty$ to those associated to the well-known couple $(L_p, L_q)$  associated with appropriate   amplified  von Neumann algebras. In the classical setting,  this  type of reduction is usually achieved  through some strategic use  of stopped martingales. We note  that  a the time of this writing there is no  direct analogue of stopping times  available for the noncommutative setting but  the so-called Cuculescu projections (\cite{Cuc}) are often used as a substitute for stopping times.  However, 
Cuculescu projections do not appear to be efficient enough to provide the desired truncations. This makes our approach very 
 different  from the classical   setting as found in  \cite{Jason-Jones, Weisz3}.  Our method  is based on the so-called algebraic atomic decompositions. The notion of algebraic atoms were introduced  by Perrin in  \cite{Perrin2} for Junge's  noncommutative  conditioned $L_p$ spaces 
and the Hardy spaces $\h_p^c$ for $1\leq p<2$ and found to be instrumental in the study of Doob's maximal inequality for martingale in  noncommutative Hardy spaces (\cite{Hong-Junge-Parcet}). Algebraic atomic decompositions for   the Hardy space $\h_p^c$  with $0<p<1$  was recently studied in \cite{Chen-Ran-Xu} by  constructive approach.
Another recent development in this direction is that the noncommutative  Orlicz-Hardy space $\h_\Phi^c$  admits algebraic atomic decomposition for convex function $\Phi$ satisfying some natural conditions. Using insights from  the constructive approach used  in \cite{Chen-Ran-Xu} and \cite{Ran-Wu-Zhou}, we established  that    noncommutative conditioned  Orlicz-Hardy spaces
admit  version of algebraic  atomic decompositions  when the Orlicz function is $p$-convex and $q$-concave for $0<p<q<2$ (see Theorem~\ref{main-algebraic}   for details). These  more general atomic decompositions  turn out to be one of  the decisive tools we used  for our results on $K$-functionals for  the couple $(\h_p^c, \h_q^c)$ when the distance between $p$ and $q$ is sufficiently small.   The results on $K$-functionals coupled with the well-known  Wolff's  interpolation theorem  provide the full noncommutative generalization of \eqref{h-BMO}. The case of  spaces of adapted sequences  is handle with the same techniques. That is, estimating the $K$-functionals through some version of algebraic atomic decompositions.

\medskip

The paper is organized as follows. In the next section, we review the basics of 
  noncommutative symmetric  quasi-Banach spaces following the formulation of \cite{Kalton-Sukochev, X} and give full detailed accounts of  the construction of  noncommutative  martingale conditioned Hardy  spaces associated with symmetric quasi-Banach spaces.  We also formulate and prove the algebraic  decomposition for  noncommutative conditioned spaces associated with Orlicz function spaces and for spaces of adapted sequences. 
     
  Section~3 contains  formulations and proofs of our principal results: noncommutative generalization of \eqref{H-BMO}, noncommutative generalization of \eqref{h-BMO}, and an extension of \eqref{classical-BMO}  to  the full range when spaces of adapted sequences are used.  
  
  In Section~4,  we explore  some further applications of  our results  and methods from  Section~2 and Section~3 to various inequalities involving  noncommutative martingale Hardy spaces  in the context of general symmetric spaces of measurable operators.


\section{Definitions and preliminary results}
Throughout, we adopt the notation  $A\lesssim_\alpha B$ to indicate that there is a constant $C_\alpha$ depending only on the parameter $\alpha$ such that  the inequality $A \leq C_\alpha B$ is satisfied. Similarly, $A\approx_\alpha B$ is used  if both $A\lesssim_\alpha B$ and $B \lesssim_\alpha A$ hold.

\subsection{Noncommutative symmetric spaces}
Throughout,   $\M$  denotes  a semifinite von Neumann algebra equipped with a faithful normal semifinite trace $\T$.  Let  $\widetilde{\M}$ denote  the associated topological $*$-algebra of $\T$-measurable operators in the sense of  \cite{FK}.  For $x \in \widetilde{\M}$, we recall   that its  \emph{generalized singular number} $\mu(x)$ 
is  the real-valued function defined by
\[
\mu_t(x)= \inf\big\{ s>0 : \T\big(\ch_{(s,\infty)}(|x|)\big)\leq t\big\}, \quad t>0,
\]
where $\ch_{(s,\infty)}(|x|)$ is the spectral projection of $|x|$ associated with the interval $(s,\infty)$.
We observe that
 if $\M$ is the abelian von Neumann algebra $L_\infty(0,\infty)$ with  the trace given by  integration  with respect to  the Lebesgue  measure, then $\widetilde{\M}$  becomes  the space of    measurable  complex functions  on $(0,\infty)$ which are bounded except on a set of finite measure and for  $f\in \widetilde{\M}$,  $\mu(f)$ is precisely  the usual decreasing rearrangement of  the function  $|f|$. We refer to \cite{PX3} for more information on noncommutative integration. 

We denote by  $L_0$,  the space of measurable functions on the interval $(0,\infty)$. 
Recall that a  quasi-Banach function space  $(E,\|\cdot\|_E)$ of measurable functions  on the interval $(0,\infty)$ is called \emph{symmetric} if for any $g \in E$ and any $f \in L_0$ with $\mu(f) \leq \mu(g)$, we have $f \in E$ and $\|f\|_E \leq \|g\|_E$. Throughout, all function spaces are assumed to be defined on the interval $(0,\infty)$.

Let  $E$ be a symmetric quasi-Banach function space. We  define the  corresponding  noncommutative space by setting:
\begin{equation*}
E(\M, \T) = \Big\{ x \in
\widetilde{M}\ : \ \mu(x) \in E \Big\}. 
\end{equation*}
Equipped with the  quasi-norm
$\|x\|_{E(\M,\T)} := \| \mu(x)\|_E$,   the linear space $E(\M,\T)$ becomes a complex quasi-Banach space (\cite{Kalton-Sukochev,X}) and is usually referred to as the \emph{noncommutative symmetric space} associated with $(\M,\T)$ corresponding to  $E$. 
 We remark that  if $0< p<\infty$ and $E=L_p$, then $E(\M, \T)$ is exactly   the usual noncommutative $L_p$-space  $L_p(\M,\T)$ associated with  $(\M,\T)$. In the sequel, $E(\M,\T)$ will be abbreviated to $E(\M)$. 
 
 Other  classes of examples that are relevant for our purpose are the class of Orlicz spaces and the class of Lorentz spaces. We review these two classes for convenience.
 
 A function $\Phi: [0,\infty) \to [0,\infty)$ is  called an \emph{Orlicz  function} whenever it is strictly increasing, continuous,  $\Phi(0)=0$, and $\lim_{u\to \infty}\Phi(u)=\infty$.
 The \emph{Orlicz space} $L_\Phi$ is the collection of all  $f\in L_0$ for which 
   there exists a constant $c$  such that   $I_\Phi(|f|/c)<\infty$  where the modular functional $I_\Phi(\cdot)$ is defined by:
 \[
 I_\Phi(|g|)=\int_0^\infty \Phi(|g(t)|)\ dt, \quad g \in L_0.
 \]
The space $L_\Phi$ is equipped  with the Luxemburg  quasi-norm:
\[
\big\|f \big\|_{L_\Phi}= \inf\big\{ c>0 : I_\Phi(|f|/c) \leq 1\big\}.
\] 
If $\Phi$ is convex then $L_\Phi$ is a symmetric Banach function space. However, we do not restrict ourselves to just the case of normed spaces. 
We refer to \cite{Turpin1} for Orlicz spaces associated  with non necessarily convex functions. 

We recall that for $0<p \leq q<\infty$, $\Phi$ is called  \emph{$p$-convex} (resp., \emph{$q$-concave}) if  the function $t\mapsto \Phi(t^{1/p})$ (resp., $t\mapsto \Phi(t^{1/q})$) is convex (resp., concave).
Below, we only consider  Orlicz spaces associated  with Orlicz  functions  that are  $p$-convex and $q$-concave for some $0<p\leq q<2$.
 
 \medskip
 
For the next relevant example,  assume that  $0<p,q\leq \infty$. The \emph{Lorentz space} $L_{p,q}$ is the space  of all $f \in L_0$ for which $\|f\|_{p,q}<\infty$ where
 \begin{equation*}
 \big\|f \big\|_{p,q}  =\begin{cases}
 \left(\displaystyle{\int_{0}^\infty \mu_{t}^{q}(f)\
d(t^{q/p})}\right)^{1/q},  &0< q < \infty; \\
\displaystyle{\sup_{t >0} t^{1/p} \mu_t(f)}, &q= \infty.
\end{cases} 
 \end{equation*}
If $1\leq q\leq p <\infty$ or $p=q=\infty$, then $L_{p,q}$ is a symmetric Banach function space.  If $1<p<\infty$ and $p\leq q\leq \infty$, then $L_{p,q}$ can be equivalently renormed to become a symmetric Banach function (\cite[Theorem~4.6]{BENSHA}). In general, $L_{p,q}$ is only a symmetric quasi-Banach function space.

 Both  noncommutative Orlicz spaces and  noncommutative Lorentz spaces will be heavily involved throughout the paper.
 
\subsection{Martingale Hardy spaces and conditioned spaces}

In the sequel, we always denote by $(\M_n)_{n \geq 1}$ an
increasing sequence of von Neumann subalgebras of ${\M}$
whose union  is w*-dense in
$\M$. For  every $n\geq 1$,  we assume that there is a trace preserving conditional expectation $\E_n$
from ${\M}$ onto  ${\M}_n$.

\begin{definition}
A sequence $x = (x_n)_{n\geq 1}$ in $L_1(\M)+\M$ is called \emph{a
noncommutative martingale} with respect to $({\M}_n)_{n \geq
1}$ if $\mathcal{E}_n (x_{n+1}) = x_n$ for every $n \geq 1$.
\end{definition}

Let $E$ be a symmetric quasi-Banach function space  and $x=(x_n)_{n\geq 1}$ be a martingale. If  for every $n\geq 1$, $x_n \in E(\M_n)$, then we  say that  $(x_n)_{n\geq 1}$ is an $E(\M)$-martingale.  In this case, we set
\begin{equation*}\| x \|_{E(\M)}= \sup_{n \geq 1} \|
x_n \|_{E(\M)}.
\end{equation*}
If $\| x \|_{E(\M)} < \infty$, then $x$   will be called
a bounded $E(\M)$-martingale.

For a martingale $x=(x_n)_{n\geq 1}$, we set $dx_n=x_n-x_{n-1}$  for $n\geq 1$ with the usual convention that $x_0=0$. The sequence $dx=(dx_n)_{n\geq 1}$ is called the \emph{martingale difference sequence} of $x$. A martingale $x$  is called a \emph{finite martingale} if there exists $N$ such that $dx_n=0$ for all $n\geq N$.

Let us now  review some basic  definitions related to   martingale Hardy  spaces associated to   noncommutative   symmetric spaces. 

Following \cite{PX}, we define  the  \emph{column square functions} of a given
  martingale $x = (x_k)$ by setting:
 \[
 S_{c,n} (x) = \Big ( \sum^n_{k = 1} |dx_k |^2 \Big )^{1/2}, \quad
 S_c (x) = \Big ( \sum^{\infty}_{k = 1} |dx_k |^2 \Big )^{1/2}\,.
 \]
The conditioned versions were introduced in \cite{JX}. For  a given $L_2(\M) +\M$-martingale $(x_k)_{k\geq 1}$, we set 
  \[
 s_{c,n} (x) = \Big ( \sum^n_{k = 1} \E_{k-1}|dx_k |^2 \Big )^{1/2}, \quad
 s_c (x) = \Big ( \sum^{\infty}_{k = 1} \E_{k-1}|dx_k |^2 \Big )^{1/2}\,.
 \]
 The operator $s_c(x)$ is  called  the \emph{column conditioned square function} of $x$.
  For convenience, we will use the notation 
  \[
  \cal{S}_{c,n}(a)= \Big ( \sum^n_{k = 1} |a_k |^2 \Big )^{1/2}, \quad
 \cal{S}_c (a) = \Big ( \sum^{\infty}_{k = 1} |a_k |^2 \Big )^{1/2}\,.
 \]
and  
 \[
  \s_{c,n} (b) = \Big ( \sum^{n}_{k = 1} \E_{k-1}|b_k |^2 \Big )^{1/2}, \quad
 \s_c (b) = \Big ( \sum^{\infty}_{k = 1} \E_{k-1}|b_k |^2 \Big )^{1/2}
 \]
 for sequences $a=(a_k)_{k\geq 1}$ in $L_1(\M)+\M$  and $b=(b_k)_{k\geq 1}$ in $L_2(\M) +\M$   that are not necessarily  martingale difference sequences.  
 It is worth pointing out that the infinite sums of positive operators stated above may not always make sense as operators  but we only consider below special cases where they do converge in the sense of  the measure topology.
 
 We will now  describe various  noncommutative martingale Hardy spaces associated with symmetric quasi-Banach  function spaces.
 
Assume  that $E$ is a symmetric  quasi-Banach function space. We denote by $\cal{F}_E$ the collection  of all finite martingales in $E(\M)$.  
For  $x=(x_k)_{k\geq 1} \in \cal{F}_E$, we set:
\[
\| x \|_{\mathcal{H}_E^c}= \| \cal{S}_c(x)\|_{E(\M)}\,. \]
Then $(\cal{F}_E, \|\cdot\|_{\cal{H}_E^c})$ is a quasi-normed space.
 If we denote by $(e_{i,j})_{i,j \geq 1}$   the family of unit matrices in $B(\ell_2(\mathbb{N}))$, then 
 the  correspondence  $x\mapsto \sum_{k\geq 1} dx_k \otimes e_{k,1}$ maps $\cal{F}_E$ isometrically into 
a (not necessarily closed) linear subspace of $E(\M\overline{\otimes} B(\ell_2(\mathbb{N})))$.
We define the  column Hardy space
  $\mathcal{H}_E^c (\mathcal{M})$ to  be the completion  of $(\cal{F}_E, \|\cdot\|_{\cal{H}_E^c})$.   It then follows that 
$\H_E^c(\M)$  embeds isometrically into a closed subspace of  the quasi-Banach space $E(\M\overline{\otimes} B(\ell_2(\mathbb{N})))$.

 In the sequel, we will also make use of the  more general space $E(\M;\ell_2^c)$ which is defined as the set of all sequences  $a=(a_k)$ in $E(\M)$ for which $\cal{S}_c(a) \in E(\M)$.  In this case, we set
\[
\big\|a \big\|_{E(\M;\ell_2^c)} = \|\cal{S}_c(a)  \|_{E(\M)}.
\]
 Under the above quasi-norm, one can easily see that  $E(\M;\ell_2^c)$ is a quasi-Banach space. 
  The closed  subspace of $E(\M;\ell_2^c)$  consisting of adapted sequences will be denoted by $E^{\rm ad}(\M;\ell_2^c)$. That is,
\[
E^{\rm ad}(\M;\ell_2^c)=\Big\{  (a_n)_{n\geq 1} \in E(\M;\ell_2^c) :  \forall n\geq 1, a_n \in E(\M_n) \Big\}.
\]
Note that for  $1<p<\infty$,  it follows from the noncommutative  Stein inequality that $L_p^{\rm ad}(\M;\ell_2^c)$ is a complemented subspace of  $L_p(\M;\ell_2^c)$.  One should not expect  such complementation if one merely assume that $E$ is quasi-Banach symmetric  function space.


\bigskip

Next, we will discuss conditioned versions of the  spaces defined earlier.
Consider the linear space $\cal{FS}$ consisting  of all $x \in \M$ such that there exists a projection $e\in \M_1$, $\T(e)<\infty$, and $x=exe$. We should note that if $\M$ is finite,  then $\cal{FS}=\M$. 
 For every $n\geq 1$ and $0<p \leq \infty$,  we define the  space $L_p^c(\M,\E_n)$ to be the completion of $\cal{FS}$ with respect to the quasi-norm:
 \[
 \big\|x\big\|_{L_p^c(\M,\E_n)} =\big\|\E_n(x^*x) \big\|_{p/2}^{1/2}.
 \]
 We would like to emphasize  here that if $x=exe \in\cal{FS}$  is as described above,  then $\E_n(x^*x)=e\E_n(x^*x)e$ is a well-defined operator in $\M$ and since $\T(e)<\infty$,  it follows that $\E_n(x^*x) \in L_{p/2}(\M)$.
 
 According to \cite{Ju},  for every $0<p\leq \infty$, there exists an isometric right $\M_n$-module map  $u_{n,p}: L_p^c(\M, \E_n) \to L_p(\M_n;\ell_2^c)$ such that
\begin{equation}\label{u}
u_{n,p}(x)^* u_{n,q}(y)=\E_n(x^*y) \otimes e_{1,1},
\end{equation}
whenever $x\in L_p^c(\M;\E_n)$, $y \in L_q^c(\M;\E_n)$, and  $1/p +1/q \leq 1$. 
An important fact about these maps is that they are independent of $p$ as the index $p$ in the presentation of \cite{Ju} was only needed to accommodate  the non-tracial case. Below, we will simply use $u_n$ for $u_{n,p}$.

For $0<p\leq \infty$, the range of $u_n$ is complemented in $L_p(\M_n,\ell_2^c)$. In fact,  as proved in  \cite[Proposition~2.8(iii)]{Ju},
 there exists a contractive projection $\cal{Q}_{n}$ from  $L_p(\M_n ; \ell_2^c)$ onto the range of  $u_{n}$ such that for every $\xi \in L_p(\M_n;\ell_2^c)$,
 \[
 \cal{Q}_{n}(\xi)^*\cal{Q}_{n}(\xi)\leq  \xi^*\xi.
 \]
This fact will be used in the sequel.

Let  $\mathfrak{F}$  be the collection of all finite sequences $a=(a_n)_{n\geq 1}$ in $\mathcal{FS}$. For $0<p<\infty$, we  defined the \emph{conditioned space}
  $L_p^{\rm cond}(\M; \ell_2^c)$ to be   the completion  of  the linear space $\mathfrak{F}$  with respect to the quasi-norm:
\begin{equation}\label{conditioned-norm}
\big\| a \big\|_{L_p^{\rm cond}(\M; \ell_2^c)} = \big\| \sigma_c(a)\big\|_p
\end{equation}
(here, we take $\E_0=\E_1$).
According to \cite{Ju},  $L_p^{\rm cond}(\M;\ell_2^c)$ can be isometrically embedded into an $L_p$-space associated to  a semifinite von  Neumann algebra  by means of the following map: 
\[
U : L_p^{\rm cond}(\M; \ell_2^c) \to L_p(\M \overline{\otimes} B(\ell_2(\mathbb{N}^2)))\]
defined by setting: 
\[
U((a_n)_{n\geq 1}) = \sum_{n\geq 1} u_{n-1}(a_n) \otimes e_{n,1}, \quad (a_n)_{n\geq 1} \in \mathfrak{F}.
\]
The range  of $U$ may be viewed as a double indexed sequences $(x_{n,k})$  such that $x_{n,k} \in L_p(\M_n)$ for all $k\geq 1$. As  an operator affiliated with $\M \overline{\otimes} B(\ell_2(\mathbb{N}^2))$,  this may be expressed as 
$\sum_{n,k} x_{n,k} \otimes e_{k,1} \otimes e_{n,1}$. 
It is immediate   from \eqref{u}  
   that if $(a_n)_{n\geq 1} \in \mathfrak{F}$ and $(b_n)_{n\geq 1} \in \mathfrak{F}$, then
\begin{equation}\label{key-identity}
U( (a_n))^* U((b_n)) =\big(\sum_{n\geq 1} \E_{n-1}(a_n^* b_n)\big) \otimes e_{1,1} \otimes e_{1,1}.
\end{equation}
In particular, if $(a_n)_{n\geq 1}  \in \mathfrak{F}$ then  $\| (a_n) \|_{L_p^{\rm cond}(\M; \ell_2^c)}=\|U( (a_n))\|_p$ and 
hence  $U$ is indeed an isometry.  

 Now, we  generalize  the notion of  conditioned spaces to the setting of symmetric spaces of operators. This is done in steps.
 
 $\bullet$  Assume  first that  $E$ is a symmetric quasi-Banach function space satisfying  $L_p \cap L_\infty \subseteq E\subseteq L_p +L_\infty$ for some $0<p<\infty$ and $L_p  \cap L_\infty$ is dense in $E$.  This is the case for instance when $E$ is a separable fully symmetric quasi-Banach function space.     For a given 
   sequence $a=(a_n)_{n\geq 1} \in \mathfrak{F}$, we set:
\[
\big\| (a_n) \big\|_{E^{\rm cond}(\M; \ell_2^c)} = \big\| \sigma_c(a)\big\|_{E(\M)}=\big\| U( (a_n))\big\|_{E(\M \overline{\otimes} B(\ell_2(\mathbb{N}^2)))}.
\]
This is well-defined and  induces   a  quasi-norm  on the linear space $\mathfrak{F}$.  We define   the quasi-Banach space $E^{\rm cond}(\M;  \ell_2^c)$ to be the completion of  the quasi normed space $(\mathfrak{F}, \|\cdot\|_{E^{\rm cond}(\M; \ell_2^c)} )$.  The space $E^{\rm cond}(\M;\ell_2^c)$ will be called \emph{the  column conditioned space associated with $E$}.
It is clear that  
 $U$ extends to an isometry from  $E^{\rm cond}(\M;\ell_2^c)$ into $E(\M \overline{\otimes} B(\ell_2(\mathbb{N}^2)))$ which we will still denote by $U$. 
 
 Below, we use the notation $E^{0,\rm cond}(\M;  \ell_2^c)$  for  the closure of the linear space
 $\mathfrak{F}^0=\{a=(a_n) \in \mathfrak{F}: a_1=0\}$ in  $E^{\rm cond}(\M;  \ell_2^c)$.  One can easily see that for $a=(a_n)_{n\geq 1} \in \mathfrak{F}$, we have
 \[
 \max\Big\{ \big\| \E_1(|a_1|^2)^{1/2} \big\|_{E(\M_1)},  \big\| (a_n)_{n\geq 2}\big\|_{E^{\rm cond}(\M;\ell_2^c)} \Big\} \leq \big\| (a_n)_{n\geq 1}\big\|_{E^{\rm cond}(\M;\ell_2^c)}. 
 \]
This shows that we have the direct sum
\begin{equation}\label{direct-sum}
E^{\rm cond}(\M;\ell_2^c)= E^c(\M;\E_1) \oplus E^{0,\rm cond}(\M;\ell_2^c).
\end{equation}
 
$\bullet$  Assume now that $E\subseteq L_p +L_q$  for some $0<p,q<\infty$ that is not necessarily separable. We set 
\[
E^{\rm cond}(\M; \ell_2^c)=\Big\{ x \in (L_p +L_q)^{\rm cond}(\M;\ell_2^c) : U(x) \in E(\M \overline{\otimes} B(\ell_2(\mathbb N^2)) )\Big\}
\]
 equipped with the  quasi-norm:
 \[
 \big\|x \big\|_{E^{\rm cond}(\M; \ell_2^c)} =\big\| U(x) \big\|_{E(\M \overline{\otimes} B(\ell_2(\mathbb{N}^2)))}. 
 \]
 It is clear that $E^{\rm cond}(\M; \ell_2^c)$ as defined is  a linear quasi-normed space. We claim that it is complete. Indeed,  if $(x_\nu)_{\nu\geq 1}$ is  a Cauchy
 sequence in $E^{\rm cond}(\M; \ell_2^c)$, then it  converges to some $x \in (L_p +L_q)^{\rm cond}(\M;\ell_2^c)$. Since $( U(x_\nu))_{\nu\geq 1}$ is also a Cauchy sequence in the quasi-Banach space $E(\M \overline{\otimes} B(\ell_2(\mathbb{N}^2)))$ and $E(\M \overline{\otimes} B(\ell_2(\mathbb{N}^2))) \subseteq (L_p +L_q)(\M \overline{\otimes} B(\ell_2(\mathbb{N}^2)))$, it follows that $x \in E^{\rm cond}(\M; \ell_2^c)$ and $(x_\nu)_{\nu\geq 1}$  converges to $x$ in $E^{\rm cond}(\M; \ell_2^c)$. We should note here that if  $L_p \cap L_\infty$ is dense in $E$ then the above definition coincides  with the one described in the previous bullet. We also define
\[
E^{0, \rm cond}(\M; \ell_2^c)=(L_p +L_q)^{0, \rm cond}(\M; \ell_2^c) \cap E^{\rm cond}(\M; \ell_2^c).
\]
The direct sum stated in \eqref{direct-sum} still applies.

\begin{remark}
At the time of this writing, we do not know  of any suitable definition  for conditioned space associated with $L_p  + L_\infty$ when $0<p<2$.  It is also unclear if  our definition of $E^{\rm cond}(\M;\ell_2^c)$ for non separable space $E\subset L_p + L_q$ when $0<p<2$ is independent of the isometry $U$.
\end{remark}
  
\medskip
  
 We now recall the construction of column  conditioned martingale Hardy spaces.
  As in the conditioned spaces,  we describe the noncommutative  conditioned Hardy spaces in steps. Let $\mathfrak{F}(M)$ be the collection of all finite martingale  $(x_n)_{1\leq n \leq N}$   for which  $ x_N  \in \cal{FS}$. We can easily see that for every $0<p\leq \infty$, $\mathfrak{F}(M) \subseteq  \h_p^c(\M)$. As in the case of conditioned spaces, $\mathfrak{F}(M)$ is dense in $\h_p^c(\M)$ when $0<p<\infty$.

$\bullet$  First, assume that $E\subseteq  L_2 + L_\infty$. In this case, column conditioned square functions are well-defined for  bounded martingales in $E(\M)$. We define
$\h_E^c(\M)$ to be the collection of all  bounded martingale $x$ in $E(\M)$ for which $s_c(x) \in E(\M)$. We equip $\h_E^c(\M)$ with the norm:
\[
 \big\|x \big\|_{\h_E^c} = \big\|s_c(x)\big\|_{E(\M)}.
 \]
 One can easily verify that $(\h_E^c(\M), \|\cdot\|_{\h_E^c})$ is complete.

$\bullet$  Next, we consider  quasi-Banach space  $E$  such  that  $L_p\cap L_\infty$ is dense in $E$ for some $0<p<\infty$. This is the case if $E$ is separable.
 Let $x \in \mathfrak{F}(M)$.  As noted above, $s_c(x) \in L_p(\M) \cap \M$. In particular, $s_c(x) \in E(\M)$.   We equip $\mathfrak{F}(M)$  with the quasi-norm
 \[
 \big\|x \big\|_{\h_E^c} = \big\|s_c(x)\big\|_{E(\M)}=\big\|(dx_n)\big\|_{E^{\rm cond}(\M;\ell_2^c)}.
 \]
 The column conditioned Hardy space  $\h_E^c(\M)$ is the completion of $(\mathfrak{F}(M), \|\cdot\|_{\h_E^c})$.   Clearly, the map  $x\mapsto (dx_n)$ (from $\mathfrak{F}(M)$ into 
 $\mathfrak{F}$)  extends to   an isometry from $\h_E^c(\M)$ into $E^{\rm cond}(\M;\ell_2^c)$ which we denote by $\mathcal{D}_c$. In particular, $\h_E^c(\M)$ is isometrically isomorphic to a  subspace of $E(\M\overline{\otimes}B(\ell_2(\mathbb{N}^2)))$ via the isometry $U\cal{D}_c$. We should note here that if $L_2 \cap L_\infty$ is dense in $E$ and $E\subseteq L_2 +L_\infty$, then the two definitions provide the same space.
 
 $\bullet$ Assume now that $E \subset L_q + L_q$ for $0<p,q<\infty$. As in the case of conditioned spaces, we set
 \[
 \h_E^c(\M)=\Big\{ x \in \h_{L_p +L_q}^c(\M) : U\cal{D}_c(x) \in E(\M \overline{\otimes} B(\ell_2(\mathbb N^2)) )\Big\}
 \]
equipped with the quasi-norm: 
 \[
 \big\|x \big\|_{\h_E^c} =\big\| U\cal{D}_c(x) \big\|_{E(\M \overline{\otimes} B(\ell_2(\mathbb{N}^2)))}. 
 \]
Since the operator $U$ is independent of the index, one can easily see that the  space $\h_E^c(\M)$  is independent of $p$ and $q$. Moreover, one can verify as in the case of conditioned spaces that the quasi-normed space  $(\h_E^c(\M), \|\cdot\|_{\h_E^c})$ is complete. Furthermore, if   $E$ is such that $L_2 \cap L_\infty$ is dense in $E$ then $\h_E^c(\M)$ coincides with the one defined through completion considered in the second bullet.

 The following fact will be used in the sequel.
 \begin{lemma}\label{complemented}
 Let  $0<p<q<\infty$ and assume that $E\subseteq L_p +L_q$. 
 The Hardy space $\h_E^c(\M)$ is  $1$-complemented  in $E^{\rm cond}(\M;\ell_2^c)$. More precisely,
 there is an  onto    map $\Pi :   E^{\rm cond}(\M;\ell_2^c) \to \h_E^c(\M)$ with 
 $\|\Pi\|=1$ and $\Pi {\cal D}_c $ is the identity  map in $\h_E^c(\M)$.
 \end{lemma}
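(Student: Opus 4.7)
The plan is to construct $\Pi$ via the natural martingale-difference projection at the level of sequences. For a finite sequence $a=(a_n)_{n\geq 1}\in\mathfrak{F}$, I would set
\[
(\Pi a)_1 := \E_1(a_1), \qquad (\Pi a)_n := \E_n(a_n) - \E_{n-1}(a_n) \text{ for } n\geq 2.
\]
Since each $a_n$ has the form $e a_n e$ with $e$ a finite trace projection in $\M_1 \subseteq \M_n$, the operators $\E_n(a_n)$ and $\E_{n-1}(a_n)$ both sit in $\cal{FS}$, so $\Pi a\in\mathfrak{F}$. By construction $(\Pi a)_n\in\M_n$ and $\E_{n-1}((\Pi a)_n)=0$ for $n\geq 2$, so $\Pi a$ is the difference sequence of a finite martingale; identifying $\h_E^c(\M)$ with its image under $\cal{D}_c$, this places $\Pi a$ inside $\cal{D}_c(\h_E^c(\M))$. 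Moreover, if $a=\cal{D}_c(x)$ is itself a martingale difference sequence, then $\E_n(a_n)=a_n$ and $\E_{n-1}(a_n)=0$ for $n\geq 2$, whence $\Pi\cal{D}_c = \mathrm{id}$ on $\h_E^c(\M)$, so $\Pi$ is idempotent and onto.

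The crucial step is to show $\|\Pi\|=1$, which I would derive from the pointwise operator inequality $\sigma_c(\Pi a)^2 \leq \sigma_c(a)^2$. The first coordinate is controlled by the Kadison--Schwarz inequality for the conditional expectation $\E_1$, giving $|\E_1(a_1)|^2 \leq \E_1(|a_1|^2)$. For $n\geq 2$, a direct expansion using $\E_{n-1}\E_n=\E_{n-1}$ yields
\[
\E_{n-1}\bigl(|(\Pi a)_n|^2\bigr) = \E_{n-1}\bigl(|\E_n(a_n)|^2\bigr) - |\E_{n-1}(a_n)|^2,
\]
which is dominated by $\E_{n-1}(|a_n|^2)$ after one more application of Kadison--Schwarz and the tower property. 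Summing in $n$ and invoking identity \eqref{key-identity} then gives $U(\Pi a)^*U(\Pi a) \leq U(a)^*U(a)$, hence $|U(\Pi a)|\leq |U(a)|$ by operator monotonicity of the square root. The ideal property of symmetric quasi-norms (together with $\mu(U(\Pi a))\leq \mu(U(a))$) delivers $\|\Pi a\|_{E^{\rm cond}(\M;\ell_2^c)}\leq \|a\|_{E^{\rm cond}(\M;\ell_2^c)}$ for any symmetric quasi-Banach function space $E$.

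I would then extend $\Pi$ from $\mathfrak{F}$ to the whole space by density. For $E$ in the separable range of the second bullet this is immediate. For general $E\subseteq L_p+L_q$ as in the third bullet, I would first run the argument in $(L_p+L_q)^{\rm cond}(\M;\ell_2^c)$ (where $\mathfrak{F}$ is dense) and then restrict to $E^{\rm cond}(\M;\ell_2^c)$; since the operator inequality $|U(\Pi a)|\leq |U(a)|$ is insensitive to the choice of symmetric quasi-norm, the restriction is again a contraction. To confirm that the image lies in $\h_E^c(\M)$ rather than in some enveloping space, I would use that $\cal{D}_c$ is an isometric embedding of $\h_E^c(\M)$ into $E^{\rm cond}(\M;\ell_2^c)$, so a Cauchy approximation of $\Pi a$ lifts to a Cauchy sequence of martingales in $\h_E^c(\M)$ whose limit has difference sequence $\Pi a$.

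I expect the main technical obstacle to be handling the density and limiting arguments cleanly in the non-separable quasi-Banach setting, together with correctly accommodating the convention $\E_0=\E_1$ so that $(\Pi a)_1$ reproduces the first martingale difference rather than vanishing; the operator-level inequality itself is essentially a two-line application of Kadison--Schwarz, and all other assertions of the lemma follow from the identities $\Pi\cal{D}_c=\mathrm{id}$ and $\|\Pi\|\leq 1$.
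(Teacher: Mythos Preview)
Your proposal is correct and follows essentially the same route as the paper: both define $\Pi$ via $(\Pi a)_n=\E_n(a_n)-\E_{n-1}(a_n)$, invoke the Kadison--Schwarz inequality to obtain the operator inequality $\sigma_c(\Pi a)\le \sigma_c(a)$ (equivalently $|U\cal{D}_c(\Pi a)|\le |U(a)|$), extend by density in the separable case, and pass through $(L_p+L_q)^{\rm cond}(\M;\ell_2^c)$ for general $E$. Your explicit handling of the first coordinate and the expansion $\E_{n-1}|(\Pi a)_n|^2=\E_{n-1}|\E_n(a_n)|^2-|\E_{n-1}(a_n)|^2$ are just more detailed renderings of what the paper compresses into a single citation of Kadison--Schwarz.
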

\begin{proof} Let $n\geq 1$ and  $b \in \mathcal{FS}$. Using the  Kadison-Schwarz inequality for conditional expectations, we have
\begin{equation}\label{KS}
\E_{n-1}(|\E_n(b)-\E_{n-1}(b)|^2) \leq \E_{n-1}(|b|^2).
\end{equation}
We define $\Pi: \mathfrak{F} \to \h_E^c(\M)$  by setting:
\[
a=(a_n)_{n\geq 1}  \mapsto \sum_{n\geq 1} \E_n(a_n)- \E_{n-1}(a_n).
\]
It follows from \eqref{KS} that $s_c(\Pi(a)) \leq  \sigma_c(a)$ for every $a\in \mathfrak{F}$.
If $\mathfrak{F}$ is dense in $E^{\rm cond}(\M;\ell_2^c)$, then  $\Pi$ extends to a bounded linear map from $E^{\rm cond}(\M;\ell_2^c)$ onto $\h_E^c(\M)$ with $\|\Pi:  E^{\rm cond}(\M;\ell_2^c) \to \h_E^c(\M)\| \leq 1$. Clearly, if $x \in \h_E^c(\M)$,  we have $\Pi{\cal D}_c (x)=x$. This verifies the lemma for the case where $E$ is  separable. 

For the general case, 
we observe   that  from the fact that $\mathfrak{F}$ is dense in $(L_p +L_q)^{\rm cond}(\M;\ell_2^c)$,  the inequality on conditioned square functions above   can be restated as:
\begin{equation}\label{KS2}
\big| U{\cal D}_c(\Pi( y)) \big| \leq \big| U( y)\big|, \quad y\in (L_p + L_q)^{\rm cond}(\M;\ell_2^c).
\end{equation}

If $y \in E^{\rm cond}(\M;\ell_2^c)$, then  by definition  $y \in (L_p +L_q)^{\rm cond}(\M;\ell_2^c)$ and $U(y) \in E(\M \overline{\otimes} B(\ell_2(\mathbb{N}^2)))$.
By the separable case, $\Pi(y) \in \h_{L_p +L_q}^c(\M)$. Moreover, it follows from 
\eqref{KS2} that $U{\cal D}(\Pi(y)) \in E(\M \overline{\otimes} B(\ell_2(\mathbb{N}^2)))$.  This means, $\Pi(y)\in \h_E^c(\M)$ with $\|\Pi(y)\|_{\h_E^c}\leq \| y\|_{E^{\rm cond}(\M;\ell_2^c)}$.
\end{proof} 
 
 
  We refer to \cite{Cad-Ricard, Dirksen2, Jiao2, Jiao-Sukochev-Zanin-Zhou, JX, RW, Ran-Wu-Xu} for more information on noncommutative  Hardy spaces associated with symmetric spaces of measurable operators. In the sequel,  noncommutative column Hardy spaces associated with Orlicz  space $L_\Phi$ will be denoted by $\H_\Phi^c(\M)$ and $\h_\Phi^c(\M)$ while those associated with the Lorentz space $L_{p,q}$ will be denoted by $\H_{p,q}^c(\M)$ and $\h_{p,q}^c(\M)$.

We  conclude this subsection with a  description of  the dual space of the  Hardy scpace $\h_1^c(\M)$. A  martingale $x$ belongs to  the  \emph{column  little bmo space}   denoted by $\bmo^c(\M)$ if 
\[
\big\| x \big\|_{\bmo^c}= \max\Big\{ \big\|x_1\big\|_\infty  ; \sup_{m} \sup_{1\leq n\leq m }\big\| \E_n(|x_m-x_n|^2) \big\|_\infty^{1/2} \Big\}.
\] 
The Banach  space $(\bmo^c(\M), \|\cdot\|_{\bmo^c})$ was introduced  in \cite{Perrin} for finite case where it was shown that  it    coincides with the dual of the Hardy space 
$\h_1^c(\M)$.  The proof given there can be easily generalized to the semifinite case. We record this for further use:
\begin{equation} \label{h1-dual}
(\h_1^c(\M))^* =\bmo^c(\M)
\end{equation} 
with equivalent norms.
\subsection{Atomic decompositions for  martingale  Orlicz-Hardy spaces}

In this subsection, we will analyze  conditioned spaces and  conditioned Hardy spaces associated  with Orlicz spaces.  More specifically,  we will  describe  a type of atomic decomposition  in the context of Orlicz spaces. Results from this subsection  play key role in the next section. Toward this end, we will start from setting up some notations.

Throughout this subsection,  we always assume that $0<p \leq q <2$ and $\Phi$ is an Orlicz function that is $p$-convex and $q$-concave. First, we fix a positive Borel measure $\mu$ on the interval $[0,\infty)$ so that:
\begin{equation}\label{measure}
\Phi(t)\approx_{p,q}\int_0^\infty \min\{(ts)^p, (ts)^q\} \ d\mu(s).
\end{equation}
The existence   of such integral representation was proved  for convex functions  (see the proof of \cite[Lemma~6.2]{Jiao-Sukochev-Zanin}).  The argument given in \cite{Jiao-Sukochev-Zanin}   can be readily adjusted to include the more general  case of $p$-convex functions when $0<p<1$. 

 Next, we fix an Orlicz function $\Theta$ so that $L_\Phi$ admits the factorization:
\begin{equation}\label{Theta}
L_\Phi=L_2 \odot L_\Theta
\end{equation}
where the product $L_2 \odot L_\Theta$ is  the collection of all function $f$ that admit  factorization$f=gh$  with $g\in L_2$ and $h\in L_{\Theta}$.  The quasi-norm
on $L_2\odot L_\Theta$ is given by:
\[
\big\|f\big\|_{L_2 \odot L_\Theta} :=\inf\left\{\big\|g\big\|_{L_2}\big\|h \big\|_{L_\Theta}: g\in L_2,  h\in L_\Theta, f=gh\right\}.
\]

We note that the Orlicz function  $\Theta$ can be taken  to be  the inverse of  the function $t\mapsto t^{-1/2}\Phi^{-1}(t)$ for  $t>0$. We refer to \cite{Maligranda2} for more details.

We will also make use of the following function:
\begin{equation}\label{psi}
\Psi(t)= \int_0^\infty (t^{2-p}s^{-p} + t^{2-q}s^{-q})^{-1}\ d\mu(s)
\end{equation}
where $\mu$ is the positive Borel measure from  the representation of $\Phi$ in 
\eqref{measure}.  The reason for the consideration of $\Psi$  is summarized in the next lemma. The first three items are  straightforward generalizations of \cite[Proposition~3.3]{Ran-Wu-Zhou} while the last item can be deduced as in \cite[Lemma~3.4]{Ran-Wu-Zhou}.
\begin{lemma}\label{psi-prop}
\begin{enumerate}[{\rm(i)}]
\item $\Psi(t) \approx_{p,q} t^{-2} \Phi(t)$;
\item $\Theta\big( \Psi(t)^{-1/2}\big) \approx_{p,q} \Phi(t)$;
\item $t \mapsto \Psi(t^{1/2})$ is  operator monotone decreasing;
\item for any increasing sequence of positive operators $a_n \uparrow a$, we have:
\[
\sum_{n\geq 1} \T\big( (a_{n+1}^2-a_n^2) \Psi(a_{n+1})\big) \lesssim_{p,q} \T\big(\Phi(a)\big).
\]
\end{enumerate}
\end{lemma}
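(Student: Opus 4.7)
The plan is to verify the four assertions directly from the integral representations \eqref{measure} of $\Phi$ and \eqref{psi} of $\Psi$, following the template of Proposition~3.3 and Lemma~3.4 of \cite{Ran-Wu-Zhou}, with the only novelty being that we must accommodate a merely $p$-convex $\Phi$ with $0<p<1$ rather than a convex one.

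For (i), I would apply the elementary two-sided bound $\tfrac{1}{2}\min(a^{-1},b^{-1}) \leq (a+b)^{-1} \leq \min(a^{-1},b^{-1})$ for $a,b>0$ to the integrand of \eqref{psi}, producing
\[
\Psi(t) \approx_{p,q} \int_0^\infty \min\bigl(t^{p-2}s^p,\, t^{q-2}s^q\bigr)\, d\mu(s) = t^{-2}\int_0^\infty \min\bigl((ts)^p, (ts)^q\bigr)\, d\mu(s) \approx_{p,q} t^{-2}\Phi(t),
\]
the last step being \eqref{measure}. For (ii), the defining relation for $\Theta$ can be rewritten as $\Theta\bigl(t\Phi(t)^{-1/2}\bigr)=\Phi(t)$; feeding in $\Psi(t)^{-1/2}\approx_{p,q} t\Phi(t)^{-1/2}$ from (i), and using that any $p$-convex $q$-concave Orlicz function satisfies $\Theta(cu)\approx\Theta(u)$ uniformly for $c$ in bounded subsets of $(0,\infty)$, yields $\Theta(\Psi(t)^{-1/2})\approx_{p,q}\Phi(t)$. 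For (iii), the substitution $t\mapsto t^{1/2}$ gives
\[
\Psi(t^{1/2}) = \int_0^\infty \bigl(t^{(2-p)/2}s^{-p} + t^{(2-q)/2}s^{-q}\bigr)^{-1}\, d\mu(s),
\]
whose integrand, being the reciprocal of a strictly positive operator monotone function of $t$ (both exponents $(2-p)/2,(2-q)/2$ lie in $(0,1]$, so L\"owner--Heinz applies), is operator monotone decreasing in $t$. Integrating against the positive measure $\mu$ preserves this property.

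For (iv), I would set $b_n=a_n^2$, $b=a^2$, and $\phi(t)=\Psi(t^{1/2})$, which is operator monotone decreasing on $(0,\infty)$ by (iii) with $\phi(0^+)=\infty$ and $\phi(\infty)=0$. A direct scalar computation using (i), the change of variables $t=u^2$, and the $p$-convex/$q$-concave growth of $\Phi$ yields the comparison for the antiderivative
\[
G(b):=\int_0^b \phi(t)\,dt \approx_{p,q} \int_0^{b^{1/2}}\frac{\Phi(u)}{u}\,du \approx_{p,q} \Phi(b^{1/2}),
\]
where the right-hand equivalence uses the upper bound $\Phi(u)\leq u^p\Phi(b^{1/2})/b^{p/2}$ on $[0,b^{1/2}]$ from $p$-convexity together with the matching lower bound from $q$-concavity. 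The remaining step, which is the main obstacle, is to establish the operator Riemann-sum estimate
\[
\sum_n \T\bigl((b_{n+1}-b_n)\phi(b_{n+1})\bigr) \leq \T\bigl(G(b)\bigr).
\]
In the scalar case this is a one-line monotonicity check, but the non-commutativity of $(b_{n+1}-b_n)$ with $\phi(b_{n+1})$ forces a detour. The route I propose is to invoke the L\"owner integral representation $\phi(t)=\int_0^\infty (t+\lambda)^{-1}\,d\tilde\nu(\lambda)$ to reduce the task to the single-kernel inequality
\[
\T\bigl((b_{n+1}-b_n)(b_{n+1}+\lambda)^{-1}\bigr)\leq \T\bigl(\log(b_{n+1}+\lambda)-\log(b_n+\lambda)\bigr),
\]
which is Klein's trace inequality applied to the convex function $-\log$ and is the operator lift of the scalar bound $1-u^{-1}\leq\log u$ for $u\geq 1$. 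Telescoping in $n$ gives $\sum_n \T\bigl((b_{n+1}-b_n)(b_{n+1}+\lambda)^{-1}\bigr)\leq \T\bigl(\log((b+\lambda)/\lambda)\bigr)$; integrating against $\tilde\nu$ and applying Fubini recovers $\T(G(b))$ on the right-hand side, completing the argument.
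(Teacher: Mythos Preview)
Your proposal is correct and follows exactly the route the paper indicates: parts (i)--(iii) are the straightforward generalizations of \cite[Proposition~3.3]{Ran-Wu-Zhou} and part (iv) is deduced as in \cite[Lemma~3.4]{Ran-Wu-Zhou}, the only adjustment being the passage from convex to $p$-convex $\Phi$. Your implementation of (iv) via the Stieltjes representation of the operator-monotone-decreasing function $\phi(t)=\Psi(t^{1/2})$ together with Klein's inequality for $-\log$ is a valid way to carry this out; just note that in the semifinite setting the telescoped quantities $\tau\bigl(\log(b_{n+1}+\lambda)-\log(b_n+\lambda)\bigr)$ must be read as traces of the (nonnegative, by operator monotonicity of $\log$) differences rather than as differences of possibly infinite traces, and one works throughout on the support projection of $b$.
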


We now introduce  a concept of atoms for conditioned space constructed from  the Orlicz function space $L_\Phi$.
\begin{definition}  A  sequence $x \in L_\Phi(\M;\ell_2^c)$ is called \emph{an algebraic $L_\Phi^{c,\rm cond}$-atom} if it admits a factorization $x=\a\ .\ \b$ where
\begin{enumerate}[{\rm (i)}]
\item $\a= \sum_{j<n} \a_{n,j} \otimes e_{n,j}$ is a strictly lower triangular matrix in $L_2(\M \overline{\otimes} B(\ell_2(\mathbb N)))$ with 
\[
\big\|\a\big\|_2 =\big( \sum_{j<n} \|\a_{n,j}\|_2^2 \big)^{1/2} \leq 1;
\]
\item $\b \in L_{\Theta}^{\rm ad}(\M;\ell_2^c)$  with $\big\| \b \big\|_{L_\Theta(\M;\ell_2^c)}\leq 1$.
\end{enumerate}
\end{definition}

The above definition was motivated by   the case of $L_r^{\rm cond}(\M;\ell_2^c)$ for $1\leq r<2$ introduced for the first time  in \cite{Perrin2} and explored further in \cite{Hong-Junge-Parcet}. We also refer to \cite{Chen-Ran-Xu} where the same concept  was considered for the  case of conditioned Hardy space $\h_r^c(\M)$ for  the range $0<r\leq 1$.  More recently, the case of   Orlicz-conditioned  Hardy space $\h_\varphi^c(\M)$ associated with convex Orlicz function $\varphi$ was formulated in \cite{Ran-Wu-Zhou} in the context of $\varphi$-moments.

We  should  note   that since strictly lower triangular matrices were used  in the definition of algebraic atoms, it follows that if $x=(x_n)_{n\geq 1}$ is an algebraic $L_\Phi^{c,\rm cond}$-atom then $x_1=0$. 

The next  lemma can be   deduced   as  in  
 the first part of \cite[Theorem~3.6.10]{Perrin2} using the factorization  $L_\Phi=L_2 \odot L_\Theta$.  We include the argument   for  the sake  of completeness.
 \begin{lemma}\label{atom-inclusion}
 Every algebraic $L_\Phi^{c,\rm cond}$-atom  belongs to  $L_\Phi^{0,\rm cond}(\M;\ell_2^c)$.  More precisely,
if $x$ is an algebraic  $L_\Phi^{c,\rm cond}$-atom then
$\displaystyle{
\|x \|_{L_{\Phi}^{\rm cond}(\M;\ell_2^c)} \leq 1}$.
\end{lemma}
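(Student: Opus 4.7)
The plan is to exploit the multiplicative factorization $L_\Phi = L_2 \odot L_\Theta$ of \eqref{Theta} by realizing $U(x)$ as an operator product $\widetilde A\,\widetilde B$ in a suitably enlarged algebra, with $\|\widetilde A\|_{L_2}\leq 1$ and $\|\widetilde B\|_{L_\Theta}\leq 1$; the noncommutative H\"older inequality will then deliver $\|U(x)\|_{L_\Phi}\leq 1$, and the conclusion follows from the isometry property of $U$ together with \eqref{key-identity}.

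First I unpack $x = \a\cdot\b$ coordinatewise. Strict lower triangularity of $\a$ yields $x_n = \sum_{j<n}\a_{n,j}\b_j$ (in particular $x_1=0$). Since $\b$ is adapted, $\b_j \in \M_j \subseteq \M_{n-1}$ whenever $j<n$, so the right $\M_{n-1}$-module property of $u_{n-1}$ extracted from \eqref{u} gives $u_{n-1}(x_n) = \sum_{j<n} u_{n-1}(\a_{n,j})\,\b_j$. I then work inside $\M \overline{\otimes} B(\ell_2(\mathbb N)) \overline{\otimes} B(\ell_2(\mathbb N))$, using the first $B(\ell_2)$-factor for the column index built into each $u_{n-1}$ and the second to absorb the $(n,j)$-indexing. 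Setting
\[
\widetilde A := \sum_{n,\,j<n} u_{n-1}(\a_{n,j}) \otimes e_{n,j}, \qquad \widetilde B := \sum_j \b_j \otimes e_{1,1}\otimes e_{j,1},
\]
a direct multiplication using $e_{n,j}\,e_{j',1} = \delta_{j,j'}\,e_{n,1}$ gives $\widetilde A\,\widetilde B = \sum_n u_{n-1}(x_n)\otimes e_{n,1} = U(x)$.

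Next I bound the two factors. Specialising \eqref{u} to $p=q=2$ and using the trace-preservation of $\E_{n-1}$ shows that each $u_{n-1}$ is an $L_2$-isometry, so
\[
\|\widetilde A\|_2^2 = \sum_{n,\,j<n}\|u_{n-1}(\a_{n,j})\|_2^2 = \sum_{n,\,j<n}\|\a_{n,j}\|_2^2 = \|\a\|_2^2 \leq 1.
\]
For $\widetilde B$, a direct calculation gives $|\widetilde B|^2 = (\sum_j|\b_j|^2)\otimes e_{1,1}\otimes e_{1,1}$, a rank-one-supported operator in the auxiliary factors, so
\[
\|\widetilde B\|_{L_\Theta} = \Big\|\Big(\sum_j |\b_j|^2\Big)^{1/2}\Big\|_{L_\Theta(\M)} = \|\b\|_{L_\Theta(\M;\ell_2^c)} \leq 1.
\]

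Finally, the noncommutative H\"older inequality associated with the Lozanovski-type factorization $L_\Phi = L_2 \odot L_\Theta$ yields $\|U(x)\|_{L_\Phi} \leq \|\widetilde A\|_{L_2}\,\|\widetilde B\|_{L_\Theta} \leq 1$, giving the required bound. The delicate point is exactly this last inequality: for the $p$-convex, $q$-concave Orlicz setting with $0<p\leq q<2$, one has to verify that the factorization $L_\Phi = L_2 \odot L_\Theta$ transfers to the noncommutative algebra with H\"older constant exactly $1$ rather than merely up to a $p,q$-dependent multiplicative constant. This rests on the specific choice $\Theta^{-1}(t) = t^{-1/2}\Phi^{-1}(t)$ and the standard Young-function calculus from \cite{Maligranda2}, with extra care in the non-convex range $p<1$; once that is in hand, the remainder of the argument is purely algebraic bookkeeping in $\M \overline{\otimes} B(\ell_2) \overline{\otimes} B(\ell_2)$.
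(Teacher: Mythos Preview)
Your argument is correct and is essentially the paper's own proof: both realize $U(x)$ (equivalently $\sigma_c(x)\otimes e_{1,1}\otimes e_{1,1}$) as a product $\widetilde A\,\widetilde B$ in $\M\overline\otimes B(\ell_2)\overline\otimes B(\ell_2)$, with $\widetilde A$ built from the $u_{n-1}(\alpha_{n,j})$ and $\widetilde B$ the column of the $\beta_j$'s, and then apply the H\"older inequality coming from $L_\Phi=L_2\odot L_\Theta$. The only cosmetic difference is that you factor $U(x)$ directly via the right-module property of $u_{n-1}$ (which, strictly speaking, requires extending that property from $b\in\M_{n-1}$ to $\beta_j\in L_\Theta(\M_{j})$, a routine density step your phrase ``$\beta_j\in\M_j$'' elides), whereas the paper instead expands $\E_{n-1}|x_n|^2$ and invokes \eqref{u} to obtain the identical factorization of $|U(x)|$.
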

\begin{proof}
Let $x=\a\ .\ \b$ be an algebraic $L_\Phi^{c,\rm cond}$-atom. As observed earlier,   $x_1=0$ and  for  $n\geq 2$,
$x_n= \sum_{j<n} \a_{n,j} \b_j$.
Since $\b$ is adapted, it follows that
\[
\E_{n-1}|x_n|^2 =\sum_{m,j<n}  \b_m^* \E_{n-1}(\a_{n,m}^* \a_{n,j}) \b_j.
\]
From the property of the module map $u_{n-1}$, we have
\begin{align*}
\E_{n-1}|x_n|^2 \otimes e_{1,1}  &= \sum_{m,j <n}  (\b_m^* \otimes e_{1,1}) . u_{n-1}(\a_{n,m})^* u_{n-1}(\a_{n,j})  . (\b_j \otimes e_{1,1}) \\
&=| \sum_{j<n} u_{n-1}(\a_{n,j}).(\b_j \otimes e_{1,1})|^2.
\end{align*}
This implies that
\[
\sigma_c^2(x) \otimes e_{1,1} =\sum_{n\geq 2}
| \sum_{j<n} u_{n-1}(\a_{n,j}).(\b_j \otimes e_{1,1})|^2.
\]
We write further that 
\[
\sigma_c^2(x) \otimes e_{1,1} \otimes e_{1,1} =| \sum_{n\geq 2}
 \sum_{j<n} [u_{n-1}(\a_{n,j}).(\b_j \otimes e_{1,1})] \otimes e_{n,1} |^2.
\]
This allows us to deduce that
\[
\sigma_c(x) \otimes e_{1,1} \otimes e_{1,1} = \big|\big(u_{n-1}(\a_{n,j}) \big)_{j<n}\ . \ \sum_{k\geq 1} \b_k \otimes e_{1,1} \otimes  e_{k,1}  \big|
\]
where the strictly lower triangular  matrix $\widehat{\a}=(\big(u_{n-1}(\a_{n,j}) )_{j<n}$ takes its values  in $L_2(\M \overline{\otimes}  B(\ell_2(\mathbb{N})))$. We may view 
$\widehat{\a}$ as an operator affiliated with  $\mathfrak{M}=\M \overline{\otimes}  B(\ell_2(\mathbb{N}))) \overline{\otimes}  B(\ell_2(\mathbb{N})))$. If $\mu(\cdot)$  denote  the generalized  singular  number relative  to $\mathfrak{M}$ equipped with its natural trace, then
\[
\big\| \sigma_c(x)\big\|_{L_\Phi(\M)} = \|\mu(\sigma_c(x) \otimes e_{1,1} \otimes e_{1,1}) \big\|_{L_\Phi}.
\]
It follows from \cite[Theorem~4.2]{FK} that
\begin{align*}
\big\| \sigma_c(x)\big\|_{L_\Phi(\M)} 
&\leq  \|\mu(\widehat{\a}) \big\|_{2} .\big\| \mu\big( \sum_{k\geq 1} \b_k \otimes e_{1,1} \otimes e_{k,1}\big) \big\|_{L_\Theta}\\
&= \big\| \widehat{\a}\big\|_{2} . \big\| \big(\sum_{k\geq 1} |\b_k|^2\big)^{1/2} \big\|_{L_\Theta}\\
&\leq  \big(\sum_{j<n} \| \a_{n,j}\|_2^2 \big)^{1/2}  . \big\| \big(\sum_{k\geq 1} |\b_k|^2\big)^{1/2} \big\|_{L_\Theta}.
\end{align*} 
This proves that $\| x \|_{L_\Phi^{\rm cond}(\M;\ell_2^c)}\leq 1$.
\end{proof}
Using the above notion of atoms, we may naturally  consider the following concept  of atomic decompositions:
\begin{definition}
 A sequence  $x \in L_\Phi(\M;\ell_2^c)$ is said to admit an \emph{algebraic $L_\Phi^{c,\rm cond}$-atomic decomposition} if 
\[
x =\sum_{k} \lambda_k a^{(k)},
\] 
where for each $k$, $a^{(k)}$ is either an algebraic $L_\Phi^{c,\rm cond}$-atom   or $a^{(k)}$  belongs to the unit ball of  the conditioned space $L_\Phi^c(\M;\E_1)$ and $\lambda_k \in \mathbb{C}$ satisfying $\sum_k|\lambda_k|^p <\infty$ for $0<p<1$ and $\sum_k|\lambda_k| <\infty$ for $1\leq p<2$. Since $\Phi$ is $p$-convex and algebraic $L_\Phi^{c,\rm cond}$-atoms belong  to the unit ball of $L_\Phi^{\rm cond}(\M;\ell_2^c)$, it follows that  if $x$ admits an algebraic $L_\Phi^{c,\rm cond}$-atomic decomposition then it belongs to $L_\Phi^{\rm cond}(\M;\ell_2^c)$.
\end{definition}

Following \cite{Perrin2},  the corresponding \emph{ algebraic atomic column conditioned space} $L_{\Phi,\alg}^{ \rm cond}(\M;\ell_2^c)$ is defined to be  the completion of  the space of all $x$ that  admit  algebraic $L_\Phi^{c,\rm cond}$-atomic decompositions  in the space $L_\Phi^{\rm cond}(\M;\ell_2^c)$. If $x$ admits an  algebraic $L_\Phi^{c,\rm cond}$-atomic decomposition, we set:
\[
\|x\|_{L_{\Phi,\alg}^{\rm cond}(\M;\ell_2^c)}=\inf\Big(\sum_k |\lambda_k|^p\Big)^{1/p}
\quad \text{for} \ 0<p\leq 1\]
and
\[
\|x\|_{L_{\Phi,\alg}^{\rm cond}(\M;\ell_2^c)}=\inf\sum_k |\lambda_k|
\quad \text{for} \ 1<p<2,\]
where the infimum is taken over all decompositions of $x$ as described above. We refer  the reader to \cite{Chen-Ran-Xu} for a  more  in depth  discussion on  the need to separate the two cases.

The following result generalizes  the atomic decomposition of conditioned $L_p$-spaces from \cite{Perrin2}  in two directions: it is valid for  Orlicz spaces and also cover the quasi-Banach space range.  This will play a crucial role in the next section.  The approach of \cite{Perrin2} was by duality which is not  applicable to the present situation since we are dealing with not necessarily convex functions. Our constructive proof given  below combined ideas  from  \cite{Chen-Ran-Xu}  and  the case of Hardy spaces associated with convex Orlicz functions considered in \cite{Ran-Wu-Zhou}. This may be of independent interest.
\begin{theorem}\label{main-algebraic}
Let $\Phi$ be  $p$-convex and $q$-concave for $0<p\leq q<2$. Then the two spaces
$
L_{\Phi,\alg}^{ \rm cond}(\M;\ell_2^c)$ and  $ L_\Phi^{\rm cond}(\M;\ell_2^c)$ coincide (with constant of isomorphism depending only on $p$ and $q$).

More precisely,  every  $ x \in \mathfrak{F}^0$ admits a factorization $x= \a \, .\, \b$  where  $\a$   is a strictly lower triangular matrix  in $L_2(\M \overline{\otimes} B(\ell_2(\mathbb{N})))$ and $\b  \in L_\Theta^{\rm ad}(\M,\ell_2^c)$ satisfying:
\[
\big\| \a\big\|_2 \, .\, \big\| \b\big\|_{L_\Theta(\M;\ell_2^c)} \lesssim_{p,q} \big\| x \big\|_{L_\Phi^{\rm cond}(\M;\ell_2^c)}.
\]
\end{theorem}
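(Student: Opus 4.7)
The plan is to produce the factorization $x=\alpha\cdot\beta$ explicitly, with both factors built from functional calculus applied to the partial conditioned square functions $c_n := \s_{c,n+1}(x)\in\M_n$ (with $c_0:=0$). Lemma~\ref{psi-prop} is tailored precisely for this: part~(iii) guarantees positivity of the telescoping terms defining $\beta$, part~(ii) converts an $L_\Theta$-modular bound for $\beta$ into a $\Phi$-modular bound, and part~(iv) supplies the $L_2$-bound for $\alpha$.

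Concretely, the construction I have in mind is
\[
\beta_j := \bigl(\Psi(c_j)^{-1} - \Psi(c_{j-1})^{-1}\bigr)^{1/2}\in\M_j \qquad (j\geq 1),
\]
interpreted with Moore--Penrose pseudoinverses (so that $\Psi(0)^{-1}:=0$), together with the strictly lower triangular coefficients
\[
\alpha_{n,j} := x_n\,\Psi(c_{n-1})\,\beta_j \qquad (1\leq j<n).
\]
Operator monotonicity of $t\mapsto\Psi(t^{1/2})$ from Lemma~\ref{psi-prop}(iii) gives $\Psi(c_{j-1})^{-1}\leq\Psi(c_j)^{-1}$, so the $\beta_j$ are well-defined positive operators which telescope to $\mathcal{S}_c(\beta)^2 = \Psi(c_\infty)^{-1}$. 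The factorization $x_n = \sum_{j<n}\alpha_{n,j}\beta_j$ then reduces to $\sum_{j<n}\beta_j^2 = \Psi(c_{n-1})^{-1}$ (telescoping) combined with the support-projection identity $x_n = x_n P_{n-1}$, where $P_{n-1}$ is the support of $c_{n-1}$; the latter holds because $\E_{n-1}|x_n|^2\leq c_{n-1}^2$.

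For the norm estimates, cyclicity of the trace and the adaptedness of $\Psi(c_{n-1})$ give, after summation in $j$ and absorption of the telescoping sum,
\[
\|\alpha\|_2^2 = \sum_{n\geq 2}\T\bigl((c_{n-1}^2 - c_{n-2}^2)\,\Psi(c_{n-1})\bigr) \lesssim_{p,q} \T\bigl(\Phi(c_\infty)\bigr)
\]
by Lemma~\ref{psi-prop}(iv) applied to $a_n := c_{n-1}$. For $\beta$, Lemma~\ref{psi-prop}(ii) yields $\T\bigl(\Theta(\mathcal{S}_c(\beta))\bigr) = \T\bigl(\Theta(\Psi(c_\infty)^{-1/2})\bigr) \approx_{p,q} \T\bigl(\Phi(c_\infty)\bigr)$. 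Converting these modular estimates into Luxemburg quasi-norm bounds (through the Boyd-type indices induced by the $p$-convexity and $q$-concavity of $\Phi$, and the correspondingly induced indices of $\Theta$) and combining them gives $\|\alpha\|_2\cdot\|\beta\|_{L_\Theta(\M;\ell_2^c)} \lesssim_{p,q} \|x\|_{L_\Phi^{\rm cond}(\M;\ell_2^c)}$. The algebraic atomic decomposition statement follows by viewing $x/(\|\alpha\|_2\|\beta\|_{L_\Theta})$ as a single algebraic $L_\Phi^{c,\rm cond}$-atom; the reverse containment is Lemma~\ref{atom-inclusion}.

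The principal technical obstacle is the functional calculus on possibly non-invertible operators: since $p<2$ one has $\Psi(0)=\infty$, so the individual factors $\Psi(c_{n-1})$, $\Psi(c_{n-1})\beta_j$, and $x_n\Psi(c_{n-1})$ need not be bounded operators. Only certain combinations stay well-behaved as $L_2$-operators, and the support-projection identity $x_n = x_nP_{n-1}$ plays the essential role of absorbing the singular part of $\Psi(c_{n-1})$. In practice, a regularization $c_n^2\leadsto c_n^2 + \e^2$ followed by $\e\downarrow 0$, in the spirit of the constructive arguments of \cite{Chen-Ran-Xu, Ran-Wu-Zhou}, offers a clean path through this difficulty, at the cost of tracking a lower-order telescoping correction that vanishes in the limit.
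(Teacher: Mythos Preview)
Your construction is essentially identical to the paper's: the same formulas for $\alpha_{n,j}$ and $\beta_j$ via $\Psi$ applied to the partial conditioned square functions, the same telescoping computation for $\|\alpha\|_2^2$, and the same appeals to Lemma~\ref{psi-prop}(ii)--(iv). The only cosmetic differences are that the paper builds in a scaling parameter $\lambda$ from the start (choosing $\lambda^{-1}=K_{p,q}^{1/p}\|\sigma_c(x)\|_{L_\Phi}$ so that $\tau(\Phi(K_{p,q}^{1/p}\lambda\sigma_c(x)))\leq 1$, which cleanly replaces your a~posteriori ``Boyd-type'' modular-to-norm conversion) and handles the non-invertibility of the $\sigma_{c,j}$ by an initial approximation rather than via support projections and an $\varepsilon$-regularization.
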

\begin{proof}
First, we recall from earlier discussion   that $L_{\Phi,\alg}^{ \rm cond}(\M;\ell_2^c)  \subseteq L_\Phi^{\rm cond}(\M;\ell_2^c)$. Thus,  we only need to verify one inclusion.
This will  be deduced from  the second part of the theorem.

 Let $ x=(x_n)_{n=1}^N \in \mathfrak{F}^0$.   The construction below is an adaptation of the argument used in \cite{Ran-Wu-Zhou}. By definition,  $x_1=0$ and there exists a projection $e\in \M_1$ with $\T(e)<\infty$  such that  for $2\leq n\leq N$, $x_n=ex_n e$.
 
  First, we note that for $j\geq 2$,
we have $\sigma_{c,j}(x) \in e\M_{j-1}e$. By approximation, we  may assume that each  of the $\sigma_{c,j}(x)$'s  is invertible with bounded inverse in $e\M e$.  Below, we simply write $\sigma_j$ for $\sigma_{c,j}(x)$ and the function $\Psi$ is as defined in \eqref{psi}.  Let $\l>0$ to be determined later. For $n\geq 2$, we write
\begin{align*}
x_n  &=x_n \Psi(\l\sigma_n)\Psi(\l\sigma_n)^{-1}\\
&=x_n\Psi(\l\sigma_n)\big[ \Psi(\l\sigma_2)^{-1} + \sum_{3\leq m\leq n} \Psi(\l\sigma_m)^{-1}-\Psi(\l\sigma_{m-1})^{-1}\big]\\
&=x_n \Psi(\l\sigma_n)\Psi(\l\sigma_2)^{-1} + \sum_{2\leq j<n} x_n \Psi(\l\sigma_n)\big(\Psi(\l\sigma_{j+1})^{-1} -\Psi(\l\sigma_j)^{-1}\big).
\end{align*}
We define the  strictly lower triangular matrix $\a$ by  setting:
\begin{equation}\label{spliting-1}
\begin{cases}
 \alpha_{n,1} &:=x_n\Psi(\l\sigma_n)\Psi(\l\sigma_2)^{-1/2};\\
 \alpha_{n,j}&:=\displaystyle{x_n\Psi(\l\sigma_n)\left(\Psi(\l\sigma_{j+1})^{-1}-\Psi(\l\sigma_{j})^{-1}\right)^{1/2},\quad\text{for $2 \leq j<n$}.}
 \end{cases}
 \end{equation}
 We should point out here that since the function $t\mapsto \Psi(t^{1/2})$   is operator monotone decreasing and $\lambda^2\sigma_{j}^2 \leq \lambda^2 \sigma_{j+1}^2$, we have $\Psi(\lambda\sigma_{j+1}) \leq  \Psi(\lambda\sigma_{j})$. Taking inverses, 
 $\Psi(\l\sigma_{j+1})^{-1}-\Psi(\l\sigma_{j})^{-1}\geq 0$. Thus,  taking $1/2$-power in the expression above is  justified.

 The column sequence is defined by setting:
 \begin{equation}\label{spliting-2}
 \begin{cases}
 \beta_{1,1} &:=\displaystyle{\Psi(\l\sigma_2)^{-1/2};}\\
 \beta_{m,1}&:=\displaystyle{\left(\Psi(\l\sigma_{m+1})^{-1}-\Psi(\l\sigma_{m})^{-1}\right)^{1/2}, \quad \text{for  $m\geq 2$}}.
 \end{cases}
 \end{equation}
 Then $\alpha=(\alpha_{n,j})_{j<n}$ is a strictly lower triangular matrix and   $\b=(\b_{m,1})_{m\geq 1}$ is an adapted  sequence. Moreover,   for every $n\geq 2$, it clearly follows from  the definition that
 \[
 (\alpha.\beta)_{n,1}= \sum_{j\geq 1} \alpha_{n,j}. \beta_{j,1}=x_n.
 \]
 That is,  we have the factorization $x= \a\, .\,  \b$.
 We claim that  the product $\a\, . \, \b$  satisfies the desired norm estimate.  We begin with the $L_2$-norm of $\a$. 
 \begin{align*}
 \big\|\a\big\|_2^2 &=\sum_{n\geq 2} \sum_{1\leq j<n} \big\|\a_{n,j}\big\|_2^2\\
 &=\sum_{n\geq 2} \T\big(\Psi(\l\sigma_n) |x_n|^2 \Psi(\l\sigma_n) \big[ \Psi(\l\sigma_2)^{-1} +\sum_{2\leq j<n} \Psi(\l\sigma_{j+1})^{-1}-\Psi(\l\sigma_j)^{-1}\big]\big)\\
 &=\sum_{n\geq 2}\T\big(|x_n|^2\Psi(\l\sigma_n)\big).
 \end{align*}
 Since $(\Psi(\l \sigma_n))$ is a predictable sequence, we have 
 \begin{align*}
 \big\|\a\big\|_2^2&=\sum_{n\geq 2} \T\big(\E_{n-1}(|x_n|^2)  \Psi(\l\sigma_n) \big)\\
 &=\sum_{n\geq 2} \T\big( (\sigma_n^2- \sigma_{n-1}^2)\Psi(\l\sigma_n)\big)\\
  &=\l^{-2}\sum_{n\geq 2} \T\big( ((\l\sigma_n)^2-(\l \sigma_{n-1})^2)\Psi(\l\sigma_n)\big).
 \end{align*}
 We may deduce from Lemma~\ref{psi-prop}(iv) that there is a constant $C_{p,q}$ so that 
 \begin{equation}\label{alpha}
 \big\|\a\big\|_2^2 \leq C_{p,q} \l^{-2} \T\big(\Phi(\l\sigma_c(x))\big). 
\end{equation}
 On the other hand, Lemma~\ref{psi-prop}(ii) implies that there exists a constant $C_{p,q}'$ so that
 \begin{equation}\label{beta}
 \T\Big[ \Theta\Big( \big(\sum_{m\geq 1} |\beta_{m,1}|^2\big)^{1/2} \Big)\Big] =\T \Big[
 \Theta\Big( \Psi (\l\sigma_c(x))^{-1/2} \Big)\Big] \leq C_{p,q}'\T\big(\Phi(\l\sigma_c(x))\big).
 \end{equation}
 Let $K_{p,q}=\max\{C_{p,q}, C_{p,q}'\} +1$.
 Since $t\mapsto t^{-p}\Phi(t)$ is non-decreasing, one can easily verify that 
  $\Phi(t)\leq K_{p,q}^{-1} \Phi(K_{p,q}^{1/p}t)$. Using this fact, we get from \eqref{alpha} and \eqref{beta} that
 \begin{equation*}
 \big\|\a\big\|_2^2 \leq  \l^{-2} \T\big(\Phi( K_{p,q}^{1/p}\l\sigma_c(x))\big)
\end{equation*} 
and
\begin{equation*}
 \T\Big[ \Theta\Big( \big(\sum_{m\geq 1} |\beta_{m,1}|^2\big)^{1/2} \Big)\Big] \leq \T\big(\Phi( K_{p,q}^{1/p}\l\sigma_c(x))\big).
\end{equation*}
Choose $\lambda$ so that $\T\big(\Phi( K_{p,q}^{1/p}\l\sigma_c(x))\big)\leq 1$. This can be achieved with $\l^{-1}= K_{p,q}^{1/p} \big\|\sigma_c(x) \big\|_{L_\Phi(\M)}$.
With the above choice of $\lambda$, we clearly have 
$
\big\| \a\big\|_2 \leq  K_{p,q}^{1/p} \big\| x \big\|_{L_\Phi^{\rm cond}(\M;\ell_2^c)}$ and $\big\|\b\big\|_{L_\Theta(\M;\ell_2^c)} \leq 1$.
This proves the desired estimate and therefore the second part of the theorem.

To conclude the proof,  we apply  the case of $\mathfrak{F}^0$ with  the direct sum \eqref{direct-sum}. We see that every $y \in \mathfrak{F}$ admits an algebraic $L_\Phi^{c,\rm cond}$-atomic decomposition  with $\|y\|_{L_{\Phi,\alg}^{\rm cond}(\M;\ell_2^c)} \lesssim_{p,q} \|y\|_{L_\Phi^{\rm cond}(\M;\ell_2^c)}$. Since $\mathfrak{F}$ is dense in $L_\Phi^{\rm cond}(\M;\ell_2^c)$, it follows that
$L_{\Phi,\alg}^{\rm cond}(\M;\ell_2^c)=L_\Phi^{\rm cond}(\M;\ell_2^c)$.
 \end{proof}
 \begin{remark}\label{moment-algebraic}
 Using $\lambda=1$ in the proof above, we also obtain a moment version of the preceding theorem: given a sequence $x=(x_n)_{n\geq 1} \in \mathfrak{F}^0$,  the column matrix $\overline{x}=\sum_{n\geq 1} x_n \otimes e_{n,1}$ admits a factorization $\overline{x}=\alpha\ .\ \beta$  with $\alpha$ is a strictly lower triangular in $L_2(\M \overline{\otimes} B(\ell_2(\mathbb N)))$ and $\beta$ is an adapted  column matrix satisfying
 \[
 \big\|\alpha\big\|_2 + \T\big[\Theta\big( (\sum_{n\geq 1} |\beta_{n,1}|^2 )^{1/2}\big)\big] \lesssim_{p,q} \T\big[\Phi\big( \sigma_c(x)\big)\big]. 
 \]
 
 \end{remark}

We consider now the version of algebraic atomic decomposition for martingale Hardy  spaces. The next consideration  generalizes  notions from \cite{Chen-Ran-Xu}  and \cite{Ran-Wu-Zhou} for  non necessarily convex Orlicz functions.
 
 \begin{definition}\label{alg}
 Let $0<p< q<2$ and $\Phi$ be  an Orlicz function that is $p$-convex and $q$-concave.
 An operator $x \in L_\Phi(\M)$ is called an {\em algebraic $\h_\Phi^c$-atom}, whenever it can be written in the form $x=\sum_{n\geq 1} y_nb_n$, with $y_n$ and $b_n$ satisfying the following conditions:
\begin{enumerate}[{\rm (i)}]

\item  $\mathcal{E}_n (y_n) =0$ and $b_n \in L_\Theta(\M_n)$ for all $n\geq 1$;

\item  $\displaystyle{\sum_{n\geq 1} \big\|y_n\big\|_2^2 \leq 1}$ and $\displaystyle{\Big\| \Big( \sum_{n\geq 1} |b_n|^2 \Big)^{1/2}\Big\|_{L_\Theta(\M)} \leq 1}$.
\end{enumerate}
\end{definition}

Following \cite{Chen-Ran-Xu, Ran-Wu-Zhou}, this concept of atoms naturally  leads to the consideration  of the corresponding  atomic decomposition for   conditioned Orlicz-Hardy spaces:
 we say that an operator $x \in L_\Phi(\M)$ admits an algebraic $\h_\Phi^c$-atomic decomposition if 
\[
x =\sum_{k} \lambda_k a_k,
\] 
where for each $k$, $a_k$ is an algebraic $\h_\Phi^c$-atom  or an element of the unit ball of $L_\Phi(\M_1)$, and $\lambda_k \in \mathbb{C}$ satisfying $\sum_k|\lambda_k|^p <\infty$ for $0<p \leq 1$ and 
 $\sum_k|\lambda_k|<\infty$ for $1<p<2$. The corresponding \emph{ algebraic atomic column martingale Hardy space} $\h_{\Phi,\alg}^c(\M)$ is defined to be  the space of all $x$ which admit a algebraic $\h_\Phi^c$-atomic decomposition and is equipped with
\[
\|x\|_{\h_{\Phi,\alg}^c}=\inf\Big(\sum_k |\lambda_k|^p\Big)^{1/p} \quad \text{for $0<p\le1$}
\]
and
\[
\|x\|_{\h_{\Phi,\alg}^c}=\inf\,\sum_k |\lambda_k| \quad \text{for $1< p<2$},
\]
where the infimum are   taken over all  decompositions of $x$ as described above.
 
 We note that the above concepts were introduced in \cite{Ran-Wu-Zhou} for  the case where $\Phi$ is a convex function. Our main focus here is the case where $\Phi$ is $p$-convex with $0<p<1$.

 The next result is an extension of \cite[Theorem~3.10]{Chen-Ran-Xu} to the case of Orlicz function spaces. It follows immediately from Theorem~\ref{main-algebraic} and   the complementation  result stated in Lemma~\ref{complemented}.
 
 \begin{corollary}\label{decomp-marting}
Let $0<p<q<2$ and $\Phi$ is an Orlicz function that is $p$-convex and $q$-concave. Then
 \[
 \h_\Phi^{c}(\M)= \h_{\Phi,\alg}^c(\M).
 \]
 More precisely, if $ x \in \h_\Phi^c(\M)$, then $x$ admits a  unique decomposition
 $x=x_1 +y$ where $x_1 \in L_\Phi(\M_1)$ and $y$ is a scalar multiple of an algebraic $\h_{\Phi}^c$-atom. 
 \end{corollary}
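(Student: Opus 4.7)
The strategy is to transfer the factorization from Theorem~\ref{main-algebraic} to martingales via the contractive projection $\Pi$ from Lemma~\ref{complemented}. The first observation is that $\Pi$ respects the direct sum \eqref{direct-sum}: sequences of the form $(a_1,0,0,\ldots)$ in $L_\Phi^c(\M;\E_1)$ map under $\Pi$ to the constant martingale $a_1 \in L_\Phi(\M_1) \subseteq \h_\Phi^c(\M)$, while $L_\Phi^{0,\rm cond}(\M;\ell_2^c)$ maps into the closed subspace of $\h_\Phi^c(\M)$ consisting of martingales vanishing at $n=1$. It therefore suffices to prove the equivalence of the $\h_\Phi^c$ and $\h_{\Phi,\alg}^c$ quasi-norms on the subspace of martingales with $x_1 = 0$.

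For the inclusion $\h_\Phi^c(\M) \subseteq \h_{\Phi,\alg}^c(\M)$, by density one reduces to a finite martingale $x \in \mathfrak{F}(M)$ with $x_1 = 0$, whose difference sequence then lies in $\mathfrak{F}^0$. Theorem~\ref{main-algebraic} supplies a factorization $dx_n = \sum_{j<n}\alpha_{n,j}\beta_j$, with $\alpha$ strictly lower triangular in $L_2(\M \overline{\otimes} B(\ell_2(\mathbb N)))$, $\beta$ adapted in $L_\Theta^{\rm ad}(\M;\ell_2^c)$, and $\|\alpha\|_2\,\|\beta\|_{L_\Theta(\M;\ell_2^c)} \lesssim_{p,q} \|x\|_{\h_\Phi^c}$. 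I then set $y_j := \sum_{n>j}(\E_n - \E_{n-1})(\alpha_{n,j})$ and $b_j := \beta_j$. Three points require checking: (i) $\E_j(y_j) = 0$, immediate because each summand is a martingale difference at a level $n>j$; (ii) $\sum_j\|y_j\|_2^2 \leq \|\alpha\|_2^2$, which follows from orthogonality of martingale differences across distinct levels together with the Kadison--Schwarz bound \eqref{KS}; (iii) the reconstruction identity $\sum_j y_j b_j = x$, obtained from the commutation $(\E_n-\E_{n-1})(\alpha_{n,j}\beta_j) = (\E_n - \E_{n-1})(\alpha_{n,j})\beta_j$ valid because $\beta_j \in \M_j \subseteq \M_{n-1}$ for $j<n$, which collapses to $\sum_{j<n}(\E_n - \E_{n-1})(\alpha_{n,j})\beta_j = (\E_n - \E_{n-1})(dx_n) = dx_n$ and then sums over $n$. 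Rescaling by $\lambda := \|\alpha\|_2\,\|\beta\|_{L_\Theta(\M;\ell_2^c)}$ exhibits $x$ as a scalar multiple of an algebraic $\h_\Phi^c$-atom.

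For the reverse inclusion $\h_{\Phi,\alg}^c(\M) \subseteq \h_\Phi^c(\M)$, given an atom $x = \sum_j y_j b_j$, I invert the construction by setting $\alpha_{n,j} := (\E_n - \E_{n-1})(y_j)$ for $j<n$ and $\beta_j := b_j$. The martingale decomposition $y_j = \sum_{n>j}(\E_n - \E_{n-1})(y_j)$ (valid since $\E_j(y_j) = 0$) gives $\|\alpha\|_2^2 = \sum_j\|y_j\|_2^2 \leq 1$ by $L_2$-orthogonality, so $\alpha\cdot\beta$ is an algebraic $L_\Phi^{c,\rm cond}$-atom and Lemma~\ref{atom-inclusion} yields $\|\alpha\cdot\beta\|_{L_\Phi^{\rm cond}(\M;\ell_2^c)} \leq 1$. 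The same commutation identity combined with telescoping then produces $\Pi(\alpha\cdot\beta) = x$, and the norm bound $\|\Pi\| \leq 1$ from Lemma~\ref{complemented} delivers $\|x\|_{\h_\Phi^c} \leq 1$. Uniqueness of the decomposition $x = x_1 + y$ is immediate from $x_1 = \E_1(x)$.

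The main technical point I anticipate is the rigorous justification of the infinite telescoping sums defining $y_j$ and the reconstruction identity in the quasi-Banach regime $0<p<1$. The plan is to handle this by first establishing everything for finite martingales in $\mathfrak{F}(M)$, where all sums truncate, and then extending to general $x \in \h_\Phi^c(\M)$ by density together with the uniform atom-norm estimate just obtained, mirroring the closure step used at the end of the proof of Theorem~\ref{main-algebraic}.
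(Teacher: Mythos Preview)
Your proposal is correct and follows essentially the same route as the paper: both transfer the factorization from Theorem~\ref{main-algebraic} through the projection $\Pi$ of Lemma~\ref{complemented}, observing that $\Pi$ sends an algebraic $L_\Phi^{c,\rm cond}$-atom $\alpha\cdot\beta$ to $\sum_k a_k\beta_k$ with $a_k=\sum_{n>k}d_n(\alpha_{n,k})$, which is an algebraic $\h_\Phi^c$-atom. Your write-up is more explicit in spelling out the reverse inclusion (reconstructing an $L_\Phi^{c,\rm cond}$-atom from an $\h_\Phi^c$-atom via $\alpha_{n,j}=(\E_n-\E_{n-1})(y_j)$ and then applying $\Pi$), whereas the paper leaves this direction implicit; but the underlying mechanism is identical.
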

 Indeed, if $\Pi: L_\Phi^{\rm cond}(\M;\ell_2^c) \to \h_\Phi^c(\M)$ denotes the  norm  one  projection described in  Lemma~\ref{complemented} and $x=\a\, . \, \beta$  is a algebraic $L_\Phi^{c,{ \rm cond}}$-atom with $\a$ being a strictly lower triangular matrix in  $L_2(\M \overline{\otimes} B(\ell_2(\mathbb N)))$ and $\b$ is an adapted column matrix in $L_{\Theta}(\M;\ell_2^c)$, then  we have 
 \begin{align*}
 \Pi(x) &= \sum_n \sum_{n>k} d_n(\a_{n,k}) \beta_{k,1}\\
 &=\sum_k \big(\sum_{n>k} d_n(\a_{n,k}) \big) \beta_{k,1}\\
 &=\sum_k  a_k \beta_{k,1}.
 \end{align*}
 Clearly, we have for every $
 k\geq 1$, $\E_k(a_k)=0$. Moreover,  $\sum_{k\geq 1} \|a_k\|_2^2 \leq  \|\a\|_2^2$. This shows  in particular that $\Pi(x)$ is an algebraic $\h_\Phi^c$-atom.   The assertions in the corollary follow from combining this fact with Theorem~\ref{main-algebraic}  and direct sum. \qed
 
 \medskip
 
 We conclude this section  with   a   companion of Theorem~\ref{main-algebraic} for spaces of adapted sequences. It may be  viewed  as  the algebraic atomic decompositions  for  spaces of adapted sequences. This will be used in the next section.
  
 \begin{proposition}\label{factorization-adapted} Let $\Phi$ be a $p$-convex and $q$-concave Orlicz function for $0<p<q<2$. If $x=(x_n)_{n\geq 1}$ is a sequence in $L_\Phi^{\rm ad}(\M;\ell_2^c)$
 then there exists a sequence $\b=(\beta_n)_{n\geq 1}$ in $L_\Theta^{\rm ad}(\M;\ell_2^c)$ and a  lower triangular matrix $\a$ with the following properties:
 \begin{enumerate}[{\rm (i)}]
\item $\a \in L_2(\M\overline{\otimes} B(\ell_2(\mathbb N)))$;
\item for every $1\leq j\leq n$,  $\a_{n,j} \in L_2(\M_n)$;
\item $x=\a\,  . \, \b$;
\item $\big\|\a \big\|_2 \, .  \, \big \|\b \big\|_{L_\Theta(\M;\ell_2^c)} \lesssim_{p,q} \big\| x\big\|_{L_\Phi(\M;\ell_2^c)}$.
 \end{enumerate}
 Conversely, any sequence $x$ admitting  a factorization  as above belongs to $L_\Phi^{\rm ad}(\M;\ell_2^c)$ with
 \[
 \big\| x\big\|_{L_\Phi(\M;\ell_2^c)} \leq \big\|\a \big\|_2 \, .  \, \big \|\b \big\|_{L_\Theta(\M;\ell_2^c)}.
 \]
 \end{proposition}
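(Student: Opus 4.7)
The plan is to mirror the proof of Theorem~\ref{main-algebraic}, but using the unconditioned partial square functions $S_n := \cal{S}_{c,n}(x)$ in place of $\sigma_{c,n}(x)$, and producing a (non-strictly) lower triangular matrix. The crucial observation is that, because $x=(x_n)$ is adapted, each $S_n$ already lies in $\M_n$, so the functional calculus element $\Psi(\l S_n)$ also lies in $\M_n$; this lets one include the diagonal layer at $j=n$ instead of being forced to start the telescoping at $j=n-1$ as in the conditioned case.

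For the forward direction, by a standard approximation one may first assume $x$ is a finite sequence with each $S_n$ invertible in $\M_n$ with bounded inverse (e.g.\ by passing to the support of $S_N\in\M_N$ and replacing $S_n$ by $(S_n^2+\epsilon)^{1/2}$ for small $\epsilon>0$). Fix $\l>0$ to be chosen later, and set
\begin{equation*}
\a_{n,1}=x_n\Psi(\l S_n)\Psi(\l S_1)^{-1/2},\quad \a_{n,j}=x_n\Psi(\l S_n)\bigl(\Psi(\l S_j)^{-1}-\Psi(\l S_{j-1})^{-1}\bigr)^{1/2}\quad(2\leq j\leq n),
\end{equation*}
\begin{equation*}
\b_1=\Psi(\l S_1)^{-1/2},\quad \b_m=\bigl(\Psi(\l S_m)^{-1}-\Psi(\l S_{m-1})^{-1}\bigr)^{1/2}\quad(m\geq 2).
\end{equation*}
Operator-monotone decrease of $t\mapsto\Psi(t^{1/2})$ (Lemma~\ref{psi-prop}(iii)) makes the displayed differences positive, and the telescoping identity $\Psi(\l S_1)^{-1}+\sum_{j=2}^{n}(\Psi(\l S_j)^{-1}-\Psi(\l S_{j-1})^{-1})=\Psi(\l S_n)^{-1}$ yields $\sum_{j\leq n}\a_{n,j}\b_j=x_n$, which is (iii). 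Adaptedness of $x$ places $\a_{n,j}$ in $\M_n$ (using $j\leq n$) and $\b_m$ in $\M_m$, giving (ii) and adaptedness of $\b$. For the norm bound, the telescoping collapse gives
\begin{equation*}
\|\a\|_2^2=\sum_{n\geq 1}\T\bigl(|x_n|^2\Psi(\l S_n)\bigr)=\sum_{n\geq 1}\T\bigl((S_n^2-S_{n-1}^2)\Psi(\l S_n)\bigr)
\end{equation*}
with the convention $S_0=0$; the tail $n\geq 2$ is handled by Lemma~\ref{psi-prop}(iv) applied to $a_n=\l S_n$ (after a shift of indices), and the $n=1$ term by $\Psi(t)\approx t^{-2}\Phi(t)$ from Lemma~\ref{psi-prop}(i). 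A further telescoping gives $\cal{S}_c(\b)=\Psi(\l\cal{S}_c(x))^{-1/2}$, so Lemma~\ref{psi-prop}(ii) yields $\T(\Theta(\cal{S}_c(\b)))\lesssim_{p,q}\T(\Phi(\l\cal{S}_c(x)))$. Choosing $\l^{-1}\approx_{p,q}\|x\|_{L_\Phi(\M;\ell_2^c)}$ delivers (iv), and a density argument as at the end of the proof of Theorem~\ref{main-algebraic} extends the factorization to arbitrary $x\in L_\Phi^{\rm ad}(\M;\ell_2^c)$.

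For the converse, identify $\tilde{x}=\sum_n x_n\otimes e_{n,1}$ and $\tilde{\b}=\sum_m\b_m\otimes e_{m,1}$ as column vectors affiliated with $\M\,\overline{\otimes}\,B(\ell_2(\mathbb{N}))$. Viewing $\a$ as a matrix acting on the corresponding column, the matrix-column product identity $\tilde{x}=\a\tilde{\b}$ is immediate from (iii), and the noncommutative H\"older inequality for the factorization $L_\Phi=L_2\odot L_\Theta$ (as used in the proof of Lemma~\ref{atom-inclusion} via \cite[Theorem~4.2]{FK}) yields
\begin{equation*}
\|\cal{S}_c(x)\|_{L_\Phi(\M)}=\|\tilde{x}\|_{L_\Phi}\leq\|\a\|_2\,\|\tilde{\b}\|_{L_\Theta}=\|\a\|_2\,\|\b\|_{L_\Theta(\M;\ell_2^c)};
\end{equation*}
adaptedness $x_n=\sum_{j\leq n}\a_{n,j}\b_j\in L_\Phi(\M_n)$ is automatic from (ii) and adaptedness of $\b$. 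I expect the principal technical obstacle to be the approximation/density step in the forward direction: justifying the reduction to finite sequences with bounded invertible $S_n$'s and then passing to the limit while preserving the triangular structure and the norm estimate, paralleling the analogous point at the end of the proof of Theorem~\ref{main-algebraic}.
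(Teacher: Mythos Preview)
Your proposal is correct and follows essentially the same route as the paper: replace the conditioned square functions $\sigma_{c,n}(x)$ by the unconditioned ones $\cal{S}_{c,n}(x)$, drop the index shift so that $\a$ is lower (not strictly lower) triangular, and appeal to Lemma~\ref{psi-prop} and the choice of $\l$ exactly as in Theorem~\ref{main-algebraic}; the converse is handled via $L_\Phi=L_2\odot L_\Theta$ as in Lemma~\ref{atom-inclusion}. One minor point: the paper observes that because $\cal{S}_c(x)$ is well-defined for any $x\in L_\Phi^{\rm ad}(\M;\ell_2^c)$, the reduction to finite sequences is unnecessary here (only the invertibility perturbation is needed), so the density step you flag as the main obstacle does not actually arise.
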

 
 First, we note that since square functions are well-defined for  elements of $L_\Phi^{\rm ad}(\M; \ell_2^c)$, reduction to finite sequences  or sequences of finite supports is not necessary. 
 \begin{proof}[Sketch of the proof] Let $x=(x_n)_{n\geq 1}\in L_\Phi^{\rm ad}(\M;\ell_2^c)$. The construction is an adaptation of   the proof of Theorem~\ref{main-algebraic} but using square functions in place of conditioned square functions.
 
  For each $n\geq 1$,  $\cal{S}_{c,n}(x) \in L_{\Phi}(\M_n)$.  That is,  $(\cal{S}_{c,n}(x))_{n\geq 1}$ is an adapted sequence.  We simply write $\varsigma_n$ for $\cal{S}_{c,n}(x)$. As before, we  may assume that  the $\varsigma_n$'s are invertible with bounded inverse. 
  Similarly, $(\Psi(\varsigma_n))_{n\geq 1}$ is an adapted sequence with the $\Psi(\varsigma_n)$'s being invertible.
  
   As in the proof of Theorem~\ref{main-algebraic}, fix $\l>0$ and  set
\begin{equation*}
\begin{cases}
 \alpha_{n,1} &:=x_n\Psi(\l\varsigma_n)\Psi(\l\varsigma_1)^{-1/2};\\
 \alpha_{n,j}&:=\displaystyle{x_n\Psi(\l\varsigma_n)\left(\Psi(\l\varsigma_{j})^{-1}-\Psi(\l\varsigma_{j-1})^{-1}\right)^{1/2},\quad\text{for $2 \leq j \leq n$};}
 \end{cases}
 \end{equation*}
 and
 \begin{equation*}
 \begin{cases}
 \beta_{1,1} &:=\displaystyle{\Psi(\l\varsigma_1)^{-1/2};}\\
 \beta_{m,1}&:=\displaystyle{\left(\Psi(\l\varsigma_{m})^{-1}-\Psi(\l\varsigma_{m-1})^{-1}\right)^{1/2}, \quad \text{for  $m\geq 2$}}.
 \end{cases}
\end{equation*}
A slight difference here is that unlike in the proof of Theorem~\ref{main-algebraic},  we do not make the indexing shift. The factorization  $x=\a\, .\, \b$ is straightforward.  Clearly, $(\beta_{m,1})_{m\geq 1}$ is an adapted sequence. Moreover, as $x_n \in L_\Phi(\M_n)$, it  follows  that $\a_{n,j}$ is  affiliated with  $\M_n$ and  $(\a_{n,j})_{1\leq j\leq n}$ is a lower triangular matrix.

We may choose $\l$ exactly as in the proof of Theorem~\ref{main-algebraic}. That is, 
$\lambda^{-1}=K_{p,q}^{1/p} \| \cal{S}_c(x) \|_{L_\Phi(\M)}$. With this  choice, 
  the  verification of the fact that $\big\|\a\big\|_2\, . \,   \big\|\b\big\|_{L_\Theta(\M;\ell_2^c)} \lesssim_{p,q} \big\|x \big\|_{L_\Phi(\M;\ell_2^c)}$ follows the same reasoning  as in the proof of Theorem~\ref{main-algebraic} and is left to the reader.
  
  On the other hand, if $x$ admits a factorization $x=\a\, .\, \b$, then from the fact that $L_\Phi =L_2 \odot  L_\Theta$, we may conclude as in the proof of Lemma~\ref{atom-inclusion} that 
  \[
  \|x \|_{L_\Phi(\M;\ell_2^c)}  \leq \|\a\|_2\,  .  \,  \|\b\|_{L_\Theta(\M;\ell_2^c)}.
  \]
   The proof is complete.
 \end{proof}
 
 \begin{remark} Using factorizations of operator-valued  triangular matrices (or more generally, elements of Hardy spaces associated with semifinite version of subdiagonal algebras in the sense of Arveson), the existence of a factorization $x=\a\, . \, \b$,  where $\a$ is a lower triangular and $\b$  is a column matrix, is clear. We refer to \cite{PX} for this fact. The main point of Proposition~\ref{factorization-adapted} is that when $x$ is an adapted sequence, we can choose the sequence $\b$ to be adapted and the matrix $\a$ to satisfy  
 the extra property stated in item~(ii). These additional  facts  are very  important  in the next section.
 \end{remark}
%

\section{Interpolation of conditioned spaces}
This section is devoted to  interpolation spaces  between  noncommutative column/row conditioned Hardy spaces and related spaces.
Our main references for interpolation theory are the books  \cite{BENSHA,BL, KPS}.

 Since we will be  concerned with $\h_p^c(\M)$ when $0<p<1$, we will consider  the more general framework of quasi-Banach spaces. We begin with some basic definitions.

  Let $(A_0, A_1)$ be a  compatible couple of quasi-Banach spaces  in the sense that both $A_0$ and $A_1$ embed continuously into some topological vector space $\mathcal{Z}$. This allows us to  define the spaces $A_0 \cap A_1$  and $A_0 +A_1$. These are quasi-Banach spaces when equipped with  quasi-norms:
\[
\big\|x  \big\|_{A_0 \cap A_1}=\max\Big\{ \big\|x  \big\|_{A_0 } , \big\|x  \big\|_{ A_1}\Big\}
\]
and
\[
\big\|x  \big\|_{A_0 + A_1}=\inf\Big\{ \big\|x_0  \big\|_{A_0 } + \big\|x_1  \big\|_{ A_1}: \, x=x_0 +x_1,\,  x_0 \in A_0,\,  x_1 \in A_1\Big\},
\]
respectively.  
\begin{definition}\label{def-interpolation}
A  quasi-Banach space $A$ is called  an \emph{interpolation space} for the couple $(A_0, A_1)$  if $A_0 \cap A_1 \subseteq A \subseteq A_0 +A_1$ and whenever  a bounded linear operator $T: A_0 +A_1\to A_0 +A_1$ is such that $T(A_0) \subseteq A_0$ and $T(A_1) \subseteq A_1$, we have $T(A)\subseteq A$
and 
\[
\big\|T:A\to A \big\|\leq c\max\left\{\big\|T :A_0 \to A_0\big\| , \big\|T: A_1 \to A_1\big\| \right\}
\] 
for some constant $c$. 
\end{definition}
If $A$ is an interpolation space for the couple $(A_0, A_1)$, we write $A\in {\rm Int}(A_0,A_1)$. Below, we are mostly interested in  two  well-known specific interpolation methods generally referred to as  real  method and complex method.
  

We begin with   a short  discussion   of the  \emph{real interpolation} method.
 A fundamental notion  for the construction of   real interpolation spaces is the  \emph{$K$-functional} which we now describe. For $x \in A_0 +A_1$, we define the $K$-functional  by setting for $t>0$,
\[
K(x, t) =K\big(x,t; A_0,A_1\big)=\inf\Big\{ \big\|x_0  \big\|_{A_0 } + t\big\|x_1  \big\|_{ A_1}:\,  x=x_0 +x_1,\,  x_0 \in A_0,\,  x_1 \in A_1\Big\}.
\]
Note that  each $t>0$, $x \mapsto K(x,t)$ gives an equivalent  quasi-norm on $A_0  +A_1$. There is also the dual notion of $J$-functionals which is defined for $y \in A_0 \cap A_1$ and $t>0$,
\[
J(y, t) =J\big(y,t; A_0,A_1\big)=\max\Big\{ \big\| y  \big\|_{A_0 } , t\big\|y \big\|_{ A_1}\Big\}.
\]

If $0<\theta<1$ and $0< \g<\infty$, we recall the real interpolation space $A_{\theta, \g}=(A_0, A_1)_{\theta, \g}$ by $x \in A_{\theta,\g}$ if and only if

\[
\big\| x \big\|_{(A_0,A_1)_{\theta, \g}} =\Big( \int_0^\infty \big(t^{-\theta}K\big(x, t; A_0,A_1\big)\big)^{\g }\ \frac{dt}{t} \Big)^{1/\g} <\infty.
\]
If $\g=\infty$, we define $ x \in A_{\theta,\infty}$ if and only if 
\[
\big\| x\big\|_{(A_0, A_1)_{\theta, \infty}}= \sup_{t>0} t^{-\theta} K(x, t; A_0, A_1)<\infty.
\]
For $0<\theta<1$ and $0<\g\leq \infty$, $\|\cdot\|_{\theta,\g}$ is a  quasi-norm and $(A_{\theta,\g}, \|\cdot\|_{\theta,\g})$ is a quasi-Banach space. Moreover,
the space $A_{\theta,\g}$ is an interpolation space for the couple $(A_0, A_1)$  in the sense of Definition~\ref{def-interpolation}. There is also an equivalent description of $A_{\theta,\gamma}$  using the $J$-functionals for which we refer to \cite{BL, KPS} for the exact formulation. 

\smallskip

It is worth noting  that the  real interpolation method is well understood for the couple $(L_{p_0}, L_{p_1})$ for both   the classical case and the  noncommutative case. We record here that
Lorentz spaces can be realized as  real interpolation spaces  for  the couple $(L_{p_0}, L_{p_1})$. More precisely, 
 if $\cal N$  is a semifinite von Neumann algebra, $0<p_0<p_1\leq \infty$, $0<\theta<1$, and $0<q\leq \infty$ then,   up to equivalent  quasi-norms (independent of $\cal N$),  
\[
\big(L_{p_0}(\cal N), L_{p_1}(\cal N)\big)_{\theta,q}= L_{p,q}(\cal N)
\]
where $1/p=(1-\theta)p_0 +\theta/p_1$. In particular,  we have 
\[\big(L_{p_0}(\cal N), L_{p_1}(\cal N)\big)_{\theta,p}= L_{p}(\cal N)
\] with equivalent  quasi-norms. These facts can be found in \cite{PX}.

\medskip

Wolff's interpolation theorem will be used repeatedly throughout  the next subsection. We record it here for convenience.
\begin{theorem}[{\cite[Theorem~1]{Wolff}}] Let  $B_i$ ($i=1,2,3,4$) be quasi-Banach spaces such that $B_1 \cap B_4$ is dense in $B_j$ ($j=2,3$) and  satisfy:
\[
B_2=(B_1, B_3)_{\phi, r} \ \ \text{and}\ \ B_3=(B_2, B_4)_{\theta, q}
\]
for $0<\phi,\theta<1$ and  $0<r,q\leq \infty$. Then 
\[
B_2=(B_1, B_4)_{\xi,r} \ \ \text{and}\ \  B_3=(B_1, B_4)_{\zeta, q}
\]
where $\displaystyle{\xi=\frac{\phi\theta}{1-\phi +\phi\theta}}$ and $\displaystyle{\zeta=\frac{\theta}{1-\phi+\phi\theta}}$.
\end{theorem}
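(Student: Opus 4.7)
The plan is to reduce the two claimed identities to comparisons of $K$-functionals on the couple $(B_1, B_4)$, using Holmstedt's reiteration formula in conjunction with the two given interpolation identities. As a guiding principle, if the conclusion holds, then iterating real interpolation forces
\[
B_2 = (B_1, (B_1, B_4)_{\zeta, q})_{\phi, r} = (B_1, B_4)_{\phi\zeta, r}
\]
and
\[
B_3 = ((B_1, B_4)_{\xi, r}, B_4)_{\theta, q} = (B_1, B_4)_{(1-\theta)\xi + \theta, q},
\]
which pins down the two consistency relations $\xi = \phi\zeta$ and $\zeta = (1-\theta)\xi + \theta$. The unique solution is $\xi = \phi\theta/(1-\phi+\phi\theta)$ and $\zeta = \theta/(1-\phi+\phi\theta)$, matching the claim; this calculation already reveals that the content of the theorem is the mutual compatibility of the two reiteration identities, with the scale on $(B_1, B_4)$ reparametrized according to where $B_2$ and $B_3$ sit.

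I would first establish $B_3 = (B_1, B_4)_{\zeta, q}$ and then deduce $B_2 = (B_1, B_4)_{\xi, r}$ from it by standard reiteration. For the inclusion $(B_1, B_4)_{\zeta, q} \hookrightarrow B_3$, the plan is to invoke Holmstedt's formula to express $K(x, t; B_2, B_4)$ as an integral average of $K(x, \cdot; B_1, B_4)$ on an appropriately rescaled interval, exploiting that $B_2$ sits $\phi$ of the way between $B_1$ and $B_3$; then weight by $t^{-\theta}$ and integrate, matching the defining quasi-norm of $B_3 = (B_2, B_4)_{\theta, q}$. For the reverse inclusion $B_3 \hookrightarrow (B_1, B_4)_{\zeta, q}$, the cleanest route is the $J$-method: an element $x \in B_3$ admits a $J$-representation $x = \sum_k u_k$ at the $(B_2, B_4)$-scale; each $u_k \in B_2$ further admits a $J$-representation at the $(B_1, B_3)$-scale, which by iteration can be pushed to a $J$-representation at the $(B_1, B_4)$-scale with exponent $\zeta$. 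The density hypothesis that $B_1 \cap B_4$ is dense in $B_2$ and $B_3$ is exactly what permits these two-stage $J$-decompositions to remain valued in $B_1 \cap B_4$ rather than producing only limits in some larger completion.

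The main obstacle is the quasi-Banach range $0 < r, q \leq \infty$. The triangle inequality is no longer available, so every step of the Hardy-type estimation behind Holmstedt's formula must be redone using $p$-convexity of the quasi-norms, and the discrete summations underlying the $J$-method argument must be controlled by weighted $\ell^\gamma$-Hardy inequalities for all $\gamma > 0$. These estimates are standard but tedious; no genuinely new phenomenon appears relative to the Banach-space case, only heavier bookkeeping and quantitatively worse constants depending on the quasi-triangle moduli of $B_1, \ldots, B_4$. Once the first identity $B_3 = (B_1, B_4)_{\zeta, q}$ is in hand, substituting into $B_2 = (B_1, B_3)_{\phi, r}$ and applying the standard (Banach or quasi-Banach) reiteration theorem on the couple $(B_1, B_4)$ yields $B_2 = (B_1, B_4)_{\phi\zeta, r} = (B_1, B_4)_{\xi, r}$, completing the proof.
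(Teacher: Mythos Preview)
The paper does not supply its own proof of this statement: it is quoted as Wolff's theorem from \cite{Wolff} and used as a black box throughout Section~3. There is therefore no in-paper argument to compare against; I comment on your proposal directly.

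There is a circularity in your key step. You plan to ``invoke Holmstedt's formula to express $K(x, t; B_2, B_4)$ as an integral average of $K(x, \cdot; B_1, B_4)$.'' But Holmstedt's reiteration formula on the couple $(B_1, B_4)$ requires knowing in advance that $B_2 = (B_1, B_4)_{\xi', r'}$ for some $\xi'$, which is precisely the conclusion you are after. The only Holmstedt relations actually available from the hypotheses live on the couples $(B_1, B_3)$ and $(B_2, B_4)$; neither gives direct access to $K(\cdot, \cdot; B_1, B_4)$. Your phrase ``exploiting that $B_2$ sits $\phi$ of the way between $B_1$ and $B_3$'' confirms the slip: that locates $B_2$ on the $(B_1, B_3)$ scale, not on $(B_1, B_4)$. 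The $J$-method half of your plan has the same defect: a two-stage decomposition through $(B_1, B_3)$ and then $(B_2, B_4)$ does not collapse to a $(B_1, B_4)$-representation unless you already control $B_3$ in terms of $(B_1, B_4)$.

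The genuine content of Wolff's theorem is exactly that these two linked identities --- each defining one intermediate space in terms of the other --- can be disentangled. Wolff's actual argument is an iterative bootstrap: start from the crude embedding $B_3 \hookrightarrow B_1 + B_4$, substitute into $B_2 = (B_1, B_3)_{\phi, r}$ via reiteration to place $B_2$ inside some $(B_1, B_4)_{\mu_1, r}$, feed that into $B_3 = (B_2, B_4)_{\theta, q}$ to sharpen the embedding of $B_3$, and repeat; the exponents converge geometrically to $\xi$ and $\zeta$. The paper alludes to this structure later when it notes that the one-sided inclusion already follows from ``the first part of the proof'' in \cite{Wolff}. Your heuristic computation of $\xi$ and $\zeta$ via the consistency relations $\xi = \phi\zeta$ and $\zeta = (1-\theta)\xi + \theta$ is correct and useful as motivation, but it presupposes the result rather than proving it.
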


In order to make the presentation below more concise, we introduce the following terminology.
\begin{definition} 
A family of  quasi-Banach spaces $(A_{p,\g})_{p,\g \in (0,\infty]}$ is said to form a \emph{real interpolation scale} on an interval $I \subseteq \mathbb{R}\cup \{\infty\}$ if for every $p,q \in I$, $0<\g_1,\g_2,\g \leq \infty$, $0<\theta<1$, and $1/r=(1-\theta)p + \theta/q$,
\[
A_{r, \g}= (A_{p,\g_1}, A_{q,\g_2})_{\theta,\g}
\]
with equivalent quasi-norms.
\end{definition}

The next result may be viewed as a version of Wolff's interpolation theorem at the level of family of real interpolation scale.

\begin{lemma}\label{union}
 Assume that a family of quasi-Banach spaces $(A_{p,\g})_{p,\g \in (0,\infty]}$  forms a \emph{real interpolation scale} on  two  different intervals $I $ and $J$.  If  $|J \cap I |>1$,  then 
$(A_{p,\g})_{p,\g \in (0,\infty]}$  forms a real interpolation scale on $I\cup J$.
\end{lemma}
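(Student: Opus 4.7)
The plan is to invoke Wolff's interpolation theorem once or, at most, twice, using points of $I\cap J$ as bridges between the two given scales. Fix $p<q$ in $I\cup J$, secondary indices $\gamma_1,\gamma_2,\gamma\in(0,\infty]$, $\theta\in(0,1)$, and let $r$ satisfy $1/r=(1-\theta)/p+\theta/q$; the goal is
\[
(A_{p,\gamma_1},A_{q,\gamma_2})_{\theta,\gamma}=A_{r,\gamma}.
\]
If both $p,q\in I$ or both $p,q\in J$ this is the hypothesis, so we may assume (after possibly interchanging $I$ and $J$) that $p\in I\setminus J$ and $q\in J\setminus I$; then $I\cap J$ lies strictly between $p$ and $q$ and, by $|I\cap J|>1$, contains at least two distinct points.

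The core step is the following intermediate identification, which I would establish first. For any $u\in I$, $v\in J$ with $u<v$ and any $s\in I\cap J$ with $u<s<v$, and for every choice of secondary indices,
\[
A_{s,\rho}=(A_{u,\gamma_u},A_{v,\gamma_v})_{\xi,\rho},\qquad \xi=\frac{1/u-1/s}{1/u-1/v}.
\]
To prove it, choose an auxiliary $s'\in I\cap J$ distinct from $s$ (available by $|I\cap J|>1$). The scale on $I$ applied to $u,s,s'\in I$ gives $A_{s,\rho}=(A_{u,\gamma_u},A_{s',\rho'})_{\phi,\rho}$, while the scale on $J$ applied to $s,s',v\in J$ gives $A_{s',\rho'}=(A_{s,\rho},A_{v,\gamma_v})_{\theta',\rho'}$. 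Wolff's theorem then produces $A_{s,\rho}=(A_{u,\gamma_u},A_{v,\gamma_v})_{\xi,\rho}$ with $\xi=\phi\theta'/(1-\phi+\phi\theta')$, and a short algebraic simplification involving $1/u,1/s,1/s',1/v$ collapses this to the claimed scale-consistent value. The two orientations $s<s'$ and $s'<s$ are symmetric: one reads the conclusion off either the $B_2$-component or the $B_3$-component of Wolff.

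With the core step in hand I would handle the three possible positions of $r$. If $r\in I\cap J$, apply the core step directly with $u=p$, $v=q$, $s=r$. If $r\in I\setminus J$, then $r$ lies strictly below every point of $I\cap J$; pick $s\in I\cap J$ with $r<s$ and invoke the core step with $u=r$, $v=q$ to obtain
\[
A_{s,\rho}=(A_{r,\gamma},A_{q,\gamma_2})_{\phi_2,\rho},\qquad \phi_2=\frac{1/r-1/s}{1/r-1/q}.
\]
Combined with the $I$-scale identity $A_{r,\gamma}=(A_{p,\gamma_1},A_{s,\rho})_{\phi_1,\gamma}$ where $\phi_1=(1/p-1/r)/(1/p-1/s)$, a second application of Wolff to $(A_{p,\gamma_1},A_{r,\gamma},A_{s,\rho},A_{q,\gamma_2})$ yields $A_{r,\gamma}=(A_{p,\gamma_1},A_{q,\gamma_2})_{\xi,\gamma}$ with $\xi=\phi_1\phi_2/(1-\phi_1+\phi_1\phi_2)$, which simplifies once more to the target $\theta=(1/p-1/r)/(1/p-1/q)$. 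The case $r\in J\setminus I$ is entirely symmetric, with $I$ and $J$ interchanged and the conclusion read off from the $B_3$-part of Wolff.

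I expect the main obstacle to be organisational rather than conceptual: tracking the subcases defined by the position of $r$ relative to $I\cap J$, confirming that the density requirement $B_1\cap B_4\subseteq B_j$ $(j=2,3)$ in Wolff's theorem holds in each instance (inherited from the standard density of $A_0\cap A_1$ in real interpolation spaces with finite secondary index, the $\infty$-case being recovered from the finite case by routine limiting arguments), and performing the two parameter simplifications — both of which reduce to the single elementary identity
\[
(X-Y)(Z-W)-(X-Z)(Y-W)=(X-W)(Z-Y)
\]
applied to the appropriate reciprocals.
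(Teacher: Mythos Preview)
Your proposal is correct and follows essentially the same route as the paper: a three-case split according to the position of $r$ relative to $I\cap J$, each case settled by one or two applications of Wolff's theorem using points of $I\cap J$ as bridges. The only difference is organisational: you abstract the paper's Case~1 into a standalone ``core step'' (valid for any $u\in I$, $v\in J$, $s\in I\cap J$ between them, with an auxiliary $s'\in I\cap J$), whereas the paper argues Case~1 directly with the specific bridge point $w_2=\sup I$ and then invokes that case inside Cases~2 and~3; your flagging of the density hypothesis in Wolff is also a detail the paper leaves implicit.
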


\begin{proof}  We may assume that $I$ and $J$ are closed intervals. As $|I\cap J|>1$,  we may assume that $\sup I >\inf J $ and $I\cap J=[w_1, w_2]$ where $w_1=\inf J$ and $w_2=\sup  I$. Fix $p \in I \setminus J$, $q \in J\setminus  I$,  and $0<\g_1,\g_2,\g\leq \infty$. We divide the proof into three cases.

\smallskip

$\diamond$ Case~1.   Assume that  $r_1 \in (w_1, w_2)$.

Since both $p$ and $w_2$ belong to $I$, by assumption, for any given $0<\g_3\leq \infty$,
\[
A_{r_1,\g}=( A_{p,\g_1}, A_{w_2,\g_3})_{\theta_1, \g}
\] 
 where $1/{r_1}=(1-\theta_1)/p +\theta_1/{w_2}$.

On the other hand, as $r_1$ and $q$ belong  to $J$ and $r_1<w_2<q$,  the assumption also gives that 
\[
A_{w_2,\g_3} =(A_{r_1,\g}, A_{q,\g_2})_{\psi_1, \g_3}
\]
where $1/{w_2}= (1-\psi_1)/r_1 + \psi_1/q$.
Applying Wolff's interpolation theorem, with $B_1= A_{p,\g_1}$, $B_2=A_{r_1,\g}$,
$B_3=A_{w_2,\g_3}$, and $B_4=A_{q,\g_2}$, we deduce that 
\[
A_{r_1, \g}=  (A_{p,\g_1}, A_{q,\g_2})_{\theta, \g}
\]
where $\displaystyle{\theta=\frac{\theta_1\psi_1}{1-\theta_1 +\theta_1\psi_1}}$. One can readily  verify that $1/r_1=(1-\theta)/p +\theta/q$.

\medskip

$\diamond$ Case~2.  Assume that $r_2 \in (p,  w_1]$. 

 Fix $w_3 \in (w_1, w_2)$. Since $p<r_2<w_3$ and $p , w_3 \in I$, by assumption,
 we have for every $0<\g_3\leq \infty$,
 \[
A_{r_2,\g}=( A_{p,\g_1}, A_{w_3,\g_3})_{\theta_2, \g}
\] 
where $1/{r_2}=(1-\theta_2)/p + \theta_2/{w_3}$.  Using the previous case with $r_2$ in place of $p$ and $w_3$ in place of $r_1$, we have 
\[
A_{w_3,\g_3} =( A_{r_2,\g}, A_{q,\g_2})_{\psi_2, \g_3}
\]
with $1/w_3=(1-\psi_2)/{r_2} +  \psi_2/q$. 
 Using  Wolff's interpolation theorem with $B_1=A_{p,\g_1}$, $B_2=A_{r_2,\g}$,
 $B_3=A_{w_3,\g_3}$, and $B_4= A_{q,\g_2}$, we obtain that
 \[
 A_{r_2,\g}=(A_{p,\g_1}, A_{q,\g_2})_{\theta,\g}
 \]
where  $\displaystyle{\theta=\frac{\theta_2\psi_2}{1-\theta_2 +\theta_2\psi_2}}$. As before, one can verify that $1/{r_2}= (1-\theta)/p + \theta/q$.
 
 \medskip
 
 $\diamond$ Case~3. Assume $r_3 \in [w_2, q)$. 
 
  As in the previous case, let  $w_3  \in (w_1, w_2)$. Since $w_3, r_3, q \in I$, we have 
\[
A_{r_3,\g}=  (A_{w_3, \g_3},  A_{q,\g_2})_{\theta_3, \g}
\]
where $1/{r_3}= (1-\theta_3)/{w_3} + \theta_3/q$.
 Next, we apply  Case~1 with $w_3$ in  place of $r_1$ in order to get that
\[
A_{w_3, \g_3}=  ( A_{p,\g_1}, A_{q,\g_2})_{\psi_3, \g_3}
\]
with $1/{w_3}=(1-\theta_3)/p +\theta_3/q$. The desired statement can be deduced as in Case~2.

\medskip

Combining the three cases above, we may state that if  $p \in I\setminus J$,  $q\in J\setminus I$,  $0<\g_1,\g_2, \g\leq \infty$, $0<\theta<1$, and $1/r=(1-\theta)/p +\theta/q$, then
\[ 
A_{r,\g}=  (A_{p,\g_2}, A_{q,\g_2})_{\theta, \g},
\]
which is equivalent to  the statement  that  the family $(A_{p,\g})_{p, \g \in (0,\infty]}$ forms a real interpolation scale on  the interval $I\cup J$.
\end{proof}

\medskip

We now state the primary result of the paper. It is an extension of  \cite[Theorem~4.8]{Bekjan-Chen-Perrin-Y} to the full range $0<p<\infty$ and a noncommutative generalization of  \eqref{h-BMO} (see also \cite{Weisz3}).
 
\begin{theorem}\label{main-interpolation} If $0<\theta <1$, $0<p<\infty$, and  $0<\lambda, \g\leq \infty$,  then for $1/r=(1-\theta)/p$,
\[
\big( \h_{p,\lambda}^c(\M), \bmo^c(\M)\big)_{\theta, \g} = \h_{r,\g}^c(\M)
\]
with equivalent  quasi-norms.
\end{theorem}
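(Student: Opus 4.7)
\medskip

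\noindent\textbf{Proof plan for Theorem~\ref{main-interpolation}.}

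\smallskip
The plan is to reduce the theorem to the corresponding real interpolation identity for the couple $(\h_p^c(\M),\h_q^c(\M))$ with $0<p<q<\infty$, and then pass to the $\bmo^c$ endpoint via Wolff's theorem. More precisely, I first aim to prove
\begin{equation}\label{scale-aim}
\bigl(\h_{p,\lambda_0}^c(\M),\h_{q,\lambda_1}^c(\M)\bigr)_{\theta,\gamma}=\h_{r,\gamma}^c(\M),\qquad \tfrac{1}{r}=\tfrac{1-\theta}{p}+\tfrac{\theta}{q},
\end{equation}
for all $0<p<q<\infty$ and $0<\lambda_0,\lambda_1,\gamma\leq\infty$; granting this and the Bekjan--Chen--Perrin--Yin identity $(\h_1^c,\bmo^c)_{\theta,r}=\h_r^c$ for $1<r<\infty$, the full theorem follows by Wolff's interpolation theorem applied to a chain $\h_{p,\lambda}^c\to \h_{r_1}^c\to \bmo^c$ for a suitable $r_1\in(\max(p,1),\infty)$, then upgraded to the Lorentz index $\gamma$ by reiteration.

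\smallskip
For~\eqref{scale-aim} the strategy has two layers. The trivial inclusion is automatic: since the map $U\mathcal D_c$ embeds $\h_E^c(\M)$ isometrically into $E(\N)$ with $\N=\M\overline\otimes B(\ell_2(\mathbb N^2))$, functoriality of the real method yields
\[
\bigl(\h_{p,\lambda_0}^c(\M),\h_{q,\lambda_1}^c(\M)\bigr)_{\theta,\gamma}\hookrightarrow \bigl(L_{p,\lambda_0}(\N),L_{q,\lambda_1}(\N)\bigr)_{\theta,\gamma}=L_{r,\gamma}(\N),
\]
and then the definition of $\h_{r,\gamma}^c(\M)$ from the third bullet identifies the image as sitting inside $\h_{r,\gamma}^c(\M)$. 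The nontrivial direction requires a lower estimate on the $K$-functional $K(x,t;\h_p^c,\h_q^c)$. My plan is to produce such an estimate \emph{locally}, on short intervals $(p,q)\subset(0,2)$, via algebraic atomic decomposition. Given $x\in\h_{r,\gamma}^c(\M)$, I fix an Orlicz function $\Phi$ that is $p$-convex and $q$-concave with $L_\Phi\approx L_{r,\gamma}$ up to an equivalent quasi-norm, apply Corollary~\ref{decomp-marting} to split off the $L_\Phi(\M_1)$ component (treated separately with \eqref{direct-sum}) and then apply the factorization in Theorem~\ref{main-algebraic} to obtain $x=\Pi(\alpha\cdot\beta)$ with $\alpha$ strictly lower triangular in $L_2(\M\overline\otimes B(\ell_2))$, $\beta\in L_\Theta^{\mathrm{ad}}(\M;\ell_2^c)$, and $\|\alpha\|_2\cdot\|\beta\|_{L_\Theta(\M;\ell_2^c)}\lesssim_{p,q}\|x\|_{\h_{r,\gamma}^c}$, where $L_\Phi=L_2\odot L_\Theta$.

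\smallskip
Using the analogous factorizations $L_p=L_2\odot L_{\Theta_p}$ and $L_q=L_2\odot L_{\Theta_q}$ at the endpoints (with $\Theta_s=2s/(2-s)$), the bilinear map $\beta\mapsto\Pi(\alpha\cdot\beta)$ is bounded from $L_{\Theta_s}^{\mathrm{ad}}(\M;\ell_2^c)$ into $\h_s^c(\M)$ with norm $\lesssim\|\alpha\|_2$ for each $s\in\{p,q\}$; this is exactly the content of Proposition~\ref{factorization-adapted} combined with the projection estimate $s_c(\Pi(\alpha\cdot\beta))\leq\sigma_c(\alpha\cdot\beta)$ from the proof of Lemma~\ref{complemented}. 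Splitting $\beta=\beta_0^t+\beta_1^t$ optimally for $K(\beta,t;L_{\Theta_p}^{\mathrm{ad}}(\M;\ell_2^c),L_{\Theta_q}^{\mathrm{ad}}(\M;\ell_2^c))$ thus yields the $K$-functional estimate
\[
K(x,t;\h_p^c,\h_q^c)\lesssim_{p,q}\|\alpha\|_2\cdot K\bigl(\beta,t/\|\alpha\|_2;\,L_{\Theta_p}^{\mathrm{ad}}(\M;\ell_2^c),L_{\Theta_q}^{\mathrm{ad}}(\M;\ell_2^c)\bigr).
\]
Integrating against $t^{-\theta\gamma-1}dt$ and recognising the right-hand side as (a constant multiple of) $\|\beta\|_{L_\Theta(\M;\ell_2^c)}$ through the Lorentz representation of $L_\Theta$ between $L_{\Theta_p}$ and $L_{\Theta_q}$ gives $\|x\|_{(\h_p^c,\h_q^c)_{\theta,\gamma}}\lesssim_{p,q}\|x\|_{\h_{r,\gamma}^c}$, completing the local identity on $(p,q)\subset(0,2)$.

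\smallskip
Finally, the local identity is promoted to all of $(0,\infty)$ in two steps. On intervals in $(0,2)$ the Wolff-type Lemma~\ref{union} glues the locally established scale into a scale on $(0,2)$, since the hypothesis of Theorem~\ref{main-algebraic} tolerates any short subinterval. On $[1,\infty)$ the scale is already known from \cite{Bekjan-Chen-Perrin-Y}; gluing with the overlap in $(1,2)$ via Lemma~\ref{union} yields \eqref{scale-aim} on the full half-line. Then Wolff's theorem delivers the $\bmo^c$ endpoint. The main obstacle throughout is the local $K$-functional estimate: one must not only factor $x$ via atoms, but also check that the bilinear action $\beta\mapsto\Pi(\alpha\cdot\beta)$ is simultaneously bounded into $\h_p^c$ and $\h_q^c$ for the \emph{same} $\alpha$ with norm $\lesssim\|\alpha\|_2$, which is precisely what forces the invocation of the adapted-sequence factorization of Proposition~\ref{factorization-adapted} and the $p$-convex/$q$-concave hypothesis on the Orlicz function — and why the argument must be run locally and then patched by Lemma~\ref{union}.
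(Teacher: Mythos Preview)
Your global architecture — establish a local $K$-functional comparison, promote it to the full scale via Lemma~\ref{union}, reach the $\bmo^c$ endpoint by Wolff, and upgrade the Lorentz index by reiteration — is exactly the paper's. The genuine gap is in the local step. A single factorization $x=\Pi(\alpha\cdot\beta)$ from Theorem~\ref{main-algebraic} reduces the $K$-functional of $x$ for $(\h_p^c,\h_q^c)$ to the $K$-functional of the adapted sequence $\beta$ for the couple $\bigl(L_{\Theta_p}^{\rm ad}(\M;\ell_2^c),\,L_{\Theta_q}^{\rm ad}(\M;\ell_2^c)\bigr)$, with $\Theta_s=2s/(2-s)$. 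But this adapted-sequence $K$-functional is only computable (by complementation via the Stein inequality) when $\Theta_p,\Theta_q>1$, i.e.\ when $p,q>2/3$. On any subinterval with $p\le 2/3$ you are left with an adapted-sequence interpolation problem at indices $\Theta_p\le1$; that problem (the paper's Theorem~\ref{interpolation-adapted}) is exactly as hard as the one you started from and is proved there by the same machinery, not independently. One factorization step therefore merely transfers the difficulty from $\h^c$ to the adapted side without resolving it, and Lemma~\ref{union} cannot glue intervals below $2/3$ on which nothing has yet been established.

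The paper's remedy (Proposition~\ref{Key-lemma}) is to \emph{iterate} the factorization inside a single $K$-functional estimate. It works with the $t$-dependent Orlicz function $\Phi_t^{(p,q)}(u)=\min\{u^p,\,t^qu^q\}$ realizing $K(\cdot,t;L_p,L_q)$, applies Theorem~\ref{main-algebraic} once and then Proposition~\ref{factorization-adapted} repeatedly, each step raising the index pair via $1/p_m=1/p_{m-1}-1/2$, $1/q_m=1/q_{m-1}-1/2$, while keeping both indices in $(0,2)$ so that the atomic decompositions still apply; this is precisely why the restriction $2/(\nu+1)<p\le 2/\nu$, $p<q<2/(\nu-1)$ and the specific intervals $I_\nu$ are needed. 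After $\nu-1$ iterations both indices exceed $1$ and only then is Stein invoked to split the final adapted factor. A secondary issue: your choice of a single Orlicz $\Phi$ with $L_\Phi\approx L_{r,\gamma}$ is not available when $\gamma\neq r$, since Lorentz spaces are generally not Orlicz spaces; the paper sidesteps this by working at the $K$-functional level (one $\Phi_t^{(p,q)}$ for every $t$) and passing to arbitrary $\gamma$ by reiteration at the end.
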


Before we proceed, we should point out  that for $0<p<\infty$ and $0<\lambda \leq \infty$, the pair  $(\h_{p,\lambda}^c(\M), \bmo^c(\M))$  forms  a compatible couple. Indeed,  as described  in the preliminary
 section,  $\h_{p,\lambda}^c(\M)$ embeds isometrically into $L_{p,\lambda}(\M \overline{\otimes} B(\ell_2(\mathbb{N}^2)))$ which in turn is continuously embeded   into  the topological vector space $L_0(\M \overline{\otimes} B(\ell_2(\mathbb{N}^2)))$. Now we verify that $\bmo^c(\M)$ also embeds continuously into $L_0(\M \overline{\otimes} B(\ell_2(\mathbb{N}^2)))$.  This is immediate if $\M$ is finite as 
 $
 \bmo^c(\M) \subseteq \h_1^c(\M)  \subseteq L_0(\M \overline{\otimes} B(\ell_2(\mathbb{N}^2)))$.
 When $\M$ is infinite, this can be achieved  as follows:  since $\M_1$ is semifinite, choose a family of  mutually disjoint   finite projections $(e_j)_{j\in J}$ in $\M_1$ so that 
 $\sum_{j\in J} e_j={\bf 1}$ for the strong operator topology.
Let $x=(x_n)_{n\geq 1}  \in \bmo^c(\M)$ and for each $j\in J$, set $xe_j=(x_ne_j)_{n\geq 1}$. It is clear that $xe_j \in \bmo^c(\M)$ and $s_c^2(xe_j)=e_js_c^2(x) e_j$. Since $\T(e_j)<\infty$, $xe_j \in \h_2^c(\M)$. Therefore $U\cal{D}_c(x e_j)$ is well-defined in $L_0(\M \overline{\otimes} B(\ell_2(\mathbb{N}^2)))$. Moreover, the module property of $U$ implies that 
$U\cal{D}_c(xe_j) =U\cal{D}_c(xe_j) . (e_j \otimes Id)$ where $Id$ is the identity of 
$B(\ell_2(\mathbb{N}^2))$.  This implies that $\sum_{j\in J} U\cal{D}_c(xe_j)$ converges in measure. This shows in particular that the map $x \mapsto   \sum_{j\in J} U\cal{D}_c(xe_j)$  provides a continuous embedding of  $\bmo^c(\M)$ into $L_0(\M \overline{\otimes} B(\ell_2(\mathbb{N}^2)))$.
 
Similarly, if $0<r,s\leq \infty$,  then  $ \big( L_r^{\rm cond}(\M;\ell_2^c), L_s^{\rm cond}(\M;\ell_2^c)\big)$ is a compatible couple as both spaces embed continuously into 
$(L_r+L_s)^{\rm cond}(\M;\ell_2^c)$.

\bigskip

We need some preparation for the proof.  
We consider  the Orlicz  structure of the  symmetric function space $L_r +tL_s$ for  any given  $0<r<s<\infty$ and $t>0$.   To describe  this structure, we consider the following Orlicz function:
\begin{equation*}
\Phi_t^{(r,s)}(u)=\min\{u^r, t^su^s \} , \quad u\geq 0.
\end{equation*}

One can  verify that $\Phi_t^{(r,s)}(\cdot)$ is $r$-convex and $s$-concave. According to \cite[Lemma~3.2]{Hitc-SMS}, we have  for every $f\in L_r +L_s$ and $t>0$,  
\begin{equation}\label{norm-equiv}
2^{-1-1/r} \big\| f \big\|_{\Phi_t^{(r,s)}} \leq  K\big(f, t ; L_r, L_s\big) \leq 2 \big\| f \big\|_{\Phi_t^{(r,s)}}.
\end{equation}
 In particular,  the Orlicz space $L_{\Phi_t^{(r,s)}}$ coincides with  the space $L_r + t L_s$ with  isomorphism constant   depending only on the index $r$ and therefore  independent of $t$. 

Let $0<p,q,r,s<2$ and assume that $p<q$, $1/p=1/2 +1/r$, and $1/q=1/2 +1/s$.
One can easily check that  
\[
L_{p} + tL_{q}=L_2  \odot (L_{ r} + tL_{s}). 
\]
It follows  by identification  that  the following factorization holds for the corresponding  Orlicz spaces:
\[
L_{\Phi_t^{(p,q)}}= L_2 \odot L_{\Phi_t^{(r,s)}},
\]
with constants depending only on the indices $p$ and $q$. 

We will verify that the atomic decomposition results from  Theorem~\ref{main-algebraic} and Proposition~\ref{factorization-adapted} apply to this specific factorization. Ideally, we would like to have that the function $u\mapsto u^{-1/2}(\Phi_t^{(r,s)})^{-1}(u)$ is equivalent to $(\Phi_t^{(p,q)})^{-1}$ but  in order to avoid working with   inverse functions,  we will proceed directly  to  proving a corresponding result to Lemma~\ref{psi-prop}.  It also highlights the fact that for this particular case,  integral representations of the Orlicz functions involved  are  not needed.
To this end, set for $u >0$,
\[
\psi_t^{(p,q)}(u) := (u^{2-p} +t^{-q}u^{2-q})^{-1}.
\]
We have the following properties:
\begin{lemma}\label{psi-prop2}
\begin{enumerate}[{\rm(i)}]
\item $\psi_t^{(p,q)}(u) \approx_{p,q} u^{-2}\Phi_t^{(p,q)}(u)$;
\item $\Phi_t^{(r,s)} \big( (\psi_t^{(p,q)}(u))^{-1/2}\big) \lesssim_{p,q} \Phi_t^{(p,q)}(u)$;
\item $u \mapsto \psi_t^{(p,q)}(u)$ is operator monotone decreasing;
\item for any increasing sequence of  positive operators $a_n\uparrow a$, we have
\[
\sum_{n\geq 1} \T\big( (a_{n+1}^2 -a_n^2)\psi_t^{(p,q)}(a_{n+1})\big) \lesssim_{p,q} 
\T\big(\Phi_t^{(p,q)}(a)\big).
\]
\end{enumerate}
\end{lemma}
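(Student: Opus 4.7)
My plan to establish the four items is as follows.

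For (i), I would apply the elementary scalar identity $\frac{1}{a^{-1}+b^{-1}} = \frac{ab}{a+b}$, which always satisfies $\frac{1}{2}\min\{a,b\} \leq \frac{ab}{a+b} \leq \min\{a,b\}$ for positive $a,b$. Taking $a = u^{p-2}$ and $b = t^{q} u^{q-2}$ yields immediately $\psi_t^{(p,q)}(u) \approx \min\{u^{p-2}, t^{q}u^{q-2}\} = u^{-2}\min\{u^{p}, t^{q}u^{q}\} = u^{-2}\Phi_t^{(p,q)}(u)$, with absolute constants.

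For (ii), I would split the scalar analysis at the crossover point $u_\star := t^{-q/(q-p)}$ where $u^{p} = t^{q} u^{q}$. Set $v := (\psi_t^{(p,q)}(u))^{-1/2}$; by (i) we have $v^{2} \approx_{p,q} u^{2}/\Phi_t^{(p,q)}(u)$. In the regime $u \leq u_\star$ one gets $v \approx_{p,q} t^{-q/2} u^{(2-q)/2}$, and the relation $s(2-q)/2 = q$ (equivalent to $s = 2q/(2-q)$) gives $t^{s} v^{s} \approx_{p,q} t^{q} u^{q} = \Phi_t^{(p,q)}(u)$, so $\min\{v^{r}, t^{s} v^{s}\} \lesssim_{p,q} \Phi_t^{(p,q)}(u)$. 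In the symmetric regime $u \geq u_\star$ one obtains $v \approx_{p,q} u^{(2-p)/2}$, and $r(2-p)/2 = p$ (equivalent to $r = 2p/(2-p)$) gives $v^{r} \approx_{p,q} u^{p} = \Phi_t^{(p,q)}(u)$, concluding the claim.

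For (iii), I would rewrite $\psi_t^{(p,q)}(u^{1/2}) = (u^{1-p/2} + t^{-q} u^{1-q/2})^{-1}$. Since both exponents $1-p/2$ and $1-q/2$ lie in $(0,1)$, the L\"owner-Heinz theorem implies that each of $u \mapsto u^{1-p/2}$ and $u \mapsto u^{1-q/2}$ is operator monotone increasing on $(0,\infty)$. Their sum is operator monotone increasing and strictly positive, so its inverse is operator monotone decreasing; this is precisely the statement read in the form paralleling Lemma~\ref{psi-prop}(iii), which is what is actually used when applying (iv).

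For (iv), which I expect to be the main obstacle because $a_{n+1}^{2} - a_{n}^{2}$ and $\psi_t^{(p,q)}(a_{n+1})$ need not commute, I would follow the scheme used for Lemma~\ref{psi-prop}(iv), itself adapted from \cite[Lemma~3.4]{Ran-Wu-Zhou}. The key device is to use L\"owner-type integral representations of $u^{2-p}$ and $u^{2-q}$ to express $\psi_t^{(p,q)}$ as a positive superposition of resolvents $u/(u+s)$, after which the telescoping computation may be carried out legitimately inside the trace. Combined with (i) and the elementary scalar bound $\int_{0}^{a} x\,\psi_t^{(p,q)}(x)\,dx \lesssim_{p,q} \Phi_t^{(p,q)}(a)$ (verified by splitting the integral at $u_\star$), this yields the asserted trace inequality.
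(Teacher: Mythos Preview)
Your proposal is correct and follows essentially the same route as the paper. For (i) you use the harmonic-mean inequality where the paper uses the equivalent $\max$ inequality; for (ii) your split at $u_\star$ is exactly the paper's case analysis on which term realizes the minimum, exploiting the same identities $r(2-p)/2=p$ and $s(2-q)/2=q$; for (iii) you reproduce the paper's argument verbatim and, notably, you correctly observe that what is proved (and what is needed downstream) is operator monotonicity of $u\mapsto\psi_t^{(p,q)}(u^{1/2})$, paralleling Lemma~\ref{psi-prop}(iii); for (iv) both you and the paper defer to \cite[Lemma~3.4]{Ran-Wu-Zhou}, with your sketch of the resolvent-superposition technique being a reasonable elaboration of that reference.
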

\begin{proof}
For the first item $(i)$, we write $u^{-2}\Phi_t^{(p,q)}(u)=\min\{u^{-2+p}, t^q u^{-2+q}\}=\big(\max\{u^{2-p}, t^{-q}u^{2-p}\}\big)^{-1}$  and note that 
\[
2^{-1}(u^{2-p} + t^{-q}u^{2-p}) \leq \max\{u^{2-p}, t^{-q}u^{2-p} \} \leq u^{2-p} + t^{-q}u^{2-p}.
\]
It follows  from taking inverses that 
\[
\psi_t^{(p,q)}(u) \leq u^{-2}\Phi_t^{(p,q)}(u)\leq 2 \psi_t^{(p,q)}(u) .
\]
Next,  item $(iii)$ can be seen as follows: if  $a$ and $b$ are positive operators with $a\leq b$ then since $0<1-(p/2) < 1$ and $0<1-(q/2)<1$, we have
\[
a^{1-(p/2)} +t^q a^{1-(q/2)} \leq  b^{1-(p/2)} +t^q b^{1-(q/2)}.
\]
Taking  inverse operators, we see that
\[
\psi_t^{(p,q)}(b^{1/2}) \leq \psi_t^{(p,q)}(a^{1/2}).
\]
This shows that $u\mapsto \psi_t^{(p,q)}(u^{1/2})$ is operator monotone decreasing. 

With the equivalence $(i)$ on hand, the proof of $(iv)$ is identical to the proof of \cite[Lemma~3.4]{Ran-Wu-Zhou} so we leave it to the reader. It remains to verify $(ii)$. From the equivalence $(i)$, it suffices to prove that 
\[
\Phi_t^{(r,s)} \big( u(\Phi_t^{(p,q)}(u))^{-1/2}\big) \lesssim_{p,q} \Phi_t^{(p,q)}(u).
\]
Since $u(\Phi_t^{(p,q)}(u))^{-1/2}= \max\{u^{1-{p/2}}, t^{-q/2} u^{1-{q/2}}\}$,  we have
from the definition of  $\Phi_t^{(r,s)}(\cdot)$ that
\begin{equation}\label{phi-pr}
\Phi_t^{(r,s)} \big( u(\Phi_t^{(p,q)}(u))^{-1/2}\big)=
\min\Big\{\max\{(u^{1-{p/2}})^r, (t^{-q/2} u^{1-{q/2}})^r\}, \max\{t^s(u^{1-{p/2}})^s, t^s(t^{-q/2} u^{1-{q/2}})^s\}\Big\}.
\end{equation}
Note that $(u^{1-{p/2}})^r =u^p$ and $t^s(t^{-q/2} u^{1-{q/2}})^s=t^qu^q$.  If $u^{1-(p/2)} \geq  t^{-q/2} u^{1-{q/2}}$ then   $t^qu^q \geq u^p$. In particular, $\Phi_t^{(p,q)}(u)=u^p$.  Then \eqref{phi-pr} implies that 
\[
\Phi_t^{(r,s)} \big( u(\Phi_t^{(p,q)}(u))^{-1/2}\big)=\min\big\{u^p, t^s (u^{1-(p/2)})^s\big\}\leq \Phi_t^{(p,q)}(u).
\]
Similarly, if $u^{1-(p/2)} \leq   t^{-q/2} u^{1-{q/2}}$ then  $t^qu^q \leq u^p$ and therefore, $\Phi_t^{(p,q)}(u)=t^qu^q$. We can deduce from \eqref{phi-pr} that
\[
\Phi_t^{(r,s)} \big( u(\Phi_t^{(p,q)}(u))^{-1/2}\big)=\min\Big\{(t^{-q/2} u^{1-{q/2}})^r, t^q u^q\Big\}\leq  \Phi_t^{(p,q)}(u).
\]
This completes the proof.
\end{proof}

\bigskip


 The next proposition constitutes the decisive step toward   our proof of Theorem~\ref{main-interpolation}. We formulate it here for the  more general conditioned spaces.

\begin{proposition}\label{Key-lemma} Let $\nu$ be a positive integer with $\nu\geq 2$.  Assume that $2/(\nu +1) <p\leq 2/\nu$ and $p <q<2/(\nu-1)$. 
If  $x \in \mathfrak{F}$,  then for  every $t>0$,
\[
K\big(x,t; L_p^{\rm cond}(\M;\ell_2^c), L_q^{\rm cond}(\M;\ell_2^c)\big) \approx_{p,q} K\big(\sigma_c(x), t; L_p(\M), L_q(\M)\big).
\]
\end{proposition}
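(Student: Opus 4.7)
The plan is to establish the two directions separately, using the Orlicz-space interpretation $K(\cdot, t; L_p, L_q) \approx_{p,q} \|\cdot\|_{\Phi_t^{(p,q)}}$ supplied by \eqref{norm-equiv}.

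For the lower bound $K(\sigma_c(x), t; L_p(\M), L_q(\M)) \lesssim_{p,q} K(x, t; L_p^{\rm cond}(\M;\ell_2^c), L_q^{\rm cond}(\M;\ell_2^c))$, I would proceed as follows. The isometric embedding $U$ sends any conditioned decomposition $x = x_0 + x_1$ to an ambient decomposition $U(x) = U(x_0) + U(x_1)$ in $L_p(\mathfrak{M}) + L_q(\mathfrak{M})$, where $\mathfrak{M} = \M \overline{\otimes} B(\ell_2(\mathbb{N}^2))$, with norms preserved. Taking the infimum gives $K(U(x), t; L_p(\mathfrak{M}), L_q(\mathfrak{M})) \leq K(x, t; L_p^{\rm cond}, L_q^{\rm cond})$. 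Identity \eqref{key-identity} with $a = b = x$ yields $|U(x)|^2 = \sigma_c(x)^2 \otimes e_{1,1} \otimes e_{1,1}$, so $\mu(U(x)) = \mu(\sigma_c(x))$; the symmetric norm in the amplified algebra therefore agrees with the one in $\M$, and \eqref{norm-equiv} applied in both algebras converts the ambient $K$-functional into $K(\sigma_c(x), t; L_p(\M), L_q(\M))$ up to $p,q$-equivalent constants.

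For the upper bound, \eqref{norm-equiv} reduces matters to constructing $x = x_0 + x_1$ satisfying $\|x_0\|_{L_p^{\rm cond}} + t \|x_1\|_{L_q^{\rm cond}} \lesssim_{p,q} \|\sigma_c(x)\|_{\Phi_t^{(p,q)}}$. Setting $r = 2p/(2-p)$ and $s = 2q/(2-q)$, the identifications $L_p = L_2 \odot L_r$, $L_q = L_2 \odot L_s$, and $L_{\Phi_t^{(p,q)}} = L_2 \odot L_{\Phi_t^{(r,s)}}$ hold with $p,q$-uniform constants. Lemma~\ref{psi-prop2} shows $\Phi_t^{(p,q)}$ is $p$-convex and $q$-concave with $\psi_t^{(p,q)}$ playing the role of $\Psi$, so Theorem~\ref{main-algebraic} (whose proof depends only on the features collected in Lemma~\ref{psi-prop}) produces a factorization $x = \alpha \cdot \beta$ with $\alpha$ strictly lower triangular in $L_2(\M \overline{\otimes} B(\ell_2(\mathbb{N})))$, $\beta \in L_{\Phi_t^{(r,s)}}^{\rm ad}(\M;\ell_2^c)$, and
\[
\|\alpha\|_2 \cdot \|\beta\|_{L_{\Phi_t^{(r,s)}}(\M;\ell_2^c)} \lesssim_{p,q} \|\sigma_c(x)\|_{\Phi_t^{(p,q)}}.
\]
Granted a split $\beta = \beta_0 + \beta_1$ with $\beta_0 \in L_r^{\rm ad}(\M;\ell_2^c)$, $\beta_1 \in L_s^{\rm ad}(\M;\ell_2^c)$, and $\|\beta_0\|_{L_r^{\rm ad}} + t \|\beta_1\|_{L_s^{\rm ad}} \lesssim_{r,s} \|\beta\|_{L_{\Phi_t^{(r,s)}}}$, the assignment $x_i := \alpha \cdot \beta_i$ closes the argument: by the reasoning of Lemma~\ref{atom-inclusion} combined with the factorizations $L_p = L_2 \odot L_r$ and $L_q = L_2 \odot L_s$, each $x_i$ belongs to $L_{p_i}^{\rm cond}(\M;\ell_2^c)$ with $\|x_i\|_{L_{p_i}^{\rm cond}} \leq \|\alpha\|_2 \cdot \|\beta_i\|_{L_{r_i}^{\rm ad}}$.

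The main obstacle is precisely this adapted-preserving split of $\beta$. The constraints on $(p, q)$ force $r, s > 2/\nu$, so for the base case $\nu = 2$ one has $r \in (1, 2]$ and $s > 1$; in that range $L_r^{\rm ad}(\M;\ell_2^c)$ and $L_s^{\rm ad}(\M;\ell_2^c)$ are complemented in the unconditioned column spaces by the noncommutative Stein inequality, and the split reduces to \eqref{norm-equiv} applied in $\M \overline{\otimes} B(\ell_2(\mathbb{N}))$. For $\nu \geq 3$, however, $r$ may fall below $1$ and a more delicate argument is required; I expect one must run a simultaneous induction on $\nu$, proving the present proposition together with its adapted-sequence companion (the latter powered by Proposition~\ref{factorization-adapted}), so that the split at level $\nu$ is governed by the proposition at level $\nu-1$. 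This induction, rather than the factorization step itself, is where the real technical work lies.
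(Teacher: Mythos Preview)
Your approach is essentially the paper's, and your diagnosis of the obstacle is exactly right. Two remarks on execution.

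First, the paper does not set up a formal induction on $\nu$; it simply \emph{unrolls} it. After one application of Theorem~\ref{main-algebraic} giving $x=\a^{(1)}\cdot\b^{(1)}$ with $\b^{(1)}\in L_{\Phi_t^{(p_1,q_1)}}^{\rm ad}(\M;\ell_2^c)$, it applies Proposition~\ref{factorization-adapted} to $\b^{(1)}$, then to $\b^{(2)}$, and so on, producing $x=\a^{(1)}\cdots\a^{(\nu-1)}\cdot\b^{(\nu-1)}$ where $1/p_m=1/p-m/2$ and $1/q_m=1/q-m/2$. The hypotheses on $p,q$ are calibrated so that $p_m,q_m<2$ at every intermediate step (so the factorization theorems apply) while $p_{\nu-1},q_{\nu-1}>1$ at the final step (so Stein applies). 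The adapted split is then performed only once, on $\b^{(\nu-1)}$, and one sets $x^{(i)}=\a^{(1)}\cdots\a^{(\nu-1)}\cdot\zeta^{(i)}$. The verification that $x^{(i)}\in L_{p_i}^{\rm cond}$ requires working backward: the lower-triangularity condition $\a^{(m)}_{n,j}\in L_2(\M_n)$ from Proposition~\ref{factorization-adapted} ensures each intermediate product $\a^{(m)}\cdots\a^{(\nu-1)}\cdot\zeta^{(i)}$ stays adapted, and only the outermost $\a^{(1)}$, being \emph{strictly} lower triangular, pushes the result into the conditioned space via Lemma~\ref{atom-inclusion}. Your induction is equivalent but this direct iteration avoids having to state and prove the adapted companion in tandem.

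Second, a point you omit: Theorem~\ref{main-algebraic} requires $x\in\mathfrak{F}^0$, so you must first reduce to $x_1=0$ via the direct sum \eqref{direct-sum}. The $x_1$ piece is handled separately using the $1$-complementation of $u_1(L_r^c(\M,\E_1))$ in $L_r(\M_1;\ell_2^c)$ together with $\E_1(|x_1|^2)^{1/2}\le\sigma_c(x)$.
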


\begin{proof}
$\bullet$
We observe first that one inequality in the   equivalence follows easily from the isometric embeddings described in the preliminary section. Indeed, let  $x \in \mathfrak{F}$  and  set $\cal{N}= \M \overline{\otimes} B(\ell_2(\mathbb N^2))$. Recall that for $0<r\leq \infty$, the map  $U: L_r^{\rm cond}(\M;\ell_2^c) \to L_r(\N)$ is an isometry  satisfying the identity
\[
|U(x)| = \sigma_c(x) \otimes e_{1,1} \otimes e_{1,1}.
\]
It follows that for every $t>0$,
\begin{align*}
K\big(\sigma_c(x), t; L_p(\M), L_q(\M)\big) &=K\big(|U(x)|, t; L_p(\cal{N}), L_q(\cal{N})\big)\\
&= K\big(U(x), t; L_p(\cal{N}), L_q(\cal{N})\big) \\
&\leq K\big(x,t; L_p^{\rm cond}(\M;\ell_2^c), L_q^{\rm cond}(\M;\ell_2^c)\big)
\end{align*}
which  verifies one inequality with constant $1$.

\smallskip

$\bullet$
The reverse inequality is more involved. 
 We consider the following two finite sequences of indices:  set $p_0=p$ and $q_0=q$ and for $1\leq m \leq \nu-1$,
\[
1/p_{m-1}=1/2 + 1/p_m  \ \text{and}\ 1/q_{m-1} =1/2 + 1/q_m.
\]
 From earlier discussions, the following factorization holds for the respective Orlicz spaces:
\[
L_{\Phi_t^{(p_{m-1},q_{m-1})}}= L_2 \odot L_{\Phi_t^{(p_m,q_m)}}.
\]
We note that  by Lemma~\ref{psi-prop2},    atomic decompositions stated in Theorem~\ref{main-algebraic} and Proposition~\ref{factorization-adapted} apply to each of these factorizations.

The main idea in the argument below is  to repeatedly apply the above factorization  until one gets indices that are strictly larger than $1$. At that  point,  splittings into adapted sequences are possible via the  noncommutative Stein inequality. The restriction imposed on the values of $p$ and $q$  is needed in the argument  since at every step (except the last one) we need  both   indices $p_m$ and $q_m$ to remain in the open  interval $(0\, ,\, 2)$ so that algebraic atomic  decompositions from the previous section  can be applied.

We  now present the details of the proof. Assume first that $x\in \mathfrak{F}^0$ (the general case will be dealt later).

Fix $t>0$. We apply Theorem~\ref{main-algebraic} to the Orlicz  space
$L_{\Phi_t^{(p,q)}}$.
We have the following  factorization:
\begin{equation}\label{factor1}
x=\a^{(1)}. \, \b^{(1)}
\end{equation}
where $\a^{(1)}$ is a strictly lower triangular matrix with entries in $L_2(\M)$ and  $\b^{(1)}$ is an adapted column  matrix in $L_{\Phi_t^{(p_1,q_1)}}(\M; \ell_2^c)$ satisfying: 
\begin{equation}\label{ab-1}
\big\|\a^{(1)}\big\|_2 \ .  \ \big\|\b^{(1)} \big\|_{L_{\Phi_t^{(p_1,q_1)}}(\M;\ell_2^c)} \lesssim_{p,q} \big\|\sigma_c(x) \big\|_{L_{\Phi_t^{(p,q)}}(\M)} .
\end{equation}
 Next, we  inductively construct   finite sequences  $\{\a^{(m)}: 2\leq  m \leq \nu-1 \}$  and 
 $\{ \b^{(m)} : 1\leq m\leq \nu-1\}$ where the $\a^{(m)}$'s are lower triangular matrices  taking values in $L_2(\M)$  satisfying $\a^{(m)}_{n,j} \in L_2(\M_n)$ for $1\leq j\leq n$, and $\b^{(m)}$'s are adapted column matrices. Both sequences satisfy   for $2\leq m\leq \nu-1$:

\begin{equation}\label{factor2}
\b^{(m-1)}  =\a^{(m)}\,  . \,  \b^{(m)}
\end{equation}
and
 \begin{equation}\label{ab}
 \big\|\a^{(m)}\big\|_2\  . \ \big\|\b^{(m)} \big\|_{L_{\Phi_t^{(p_m,q_m)}}(\M;\ell_2^c)} 
 \lesssim_{p,q} \big\| \b^{(m-1)} \big\|_{L_{\Phi_t^{(p_{m-1},q_{m-1})}}(\M; \ell_2^c)}.
 \end{equation}
This is done by applying Proposition~\ref{factorization-adapted}  to $\b^{(m-1)} \in  
L_{\Phi_t^{(p_{m-1},q_{m-1})}}^{\rm ad}(\M; \ell_2^c)$. The constant   depends on $p_{m-1}$ and $q_{m-1}$ but since they depend on $p$ and $q$ respectively, we may state that for each step, the  constant depends on $p$ and $q$.

 Clearly, the above construction induces a factorization:
 \[
 x=\a^{(1)}\dots \a^{(\nu-1)}\,  . \,  \b^{(\nu-1)}.
 \]
 Now, we consider the adapted sequence  $\b^{(\nu-1)}\in L_{\Phi_t^{(p_{\nu-1}, q_{\nu-1})}}(\M;\ell_2^c)$. By identification, we have $\b^{(\nu-1)} \in (L_{p_{\nu-1}} +t L_{q_{\nu-1}})(\M;\ell_2^c)=L_{p_{\nu-1}}(\M;\ell_2^c) + t L_{q_{\nu-1}}(\M;\ell_2^c)$.
 
 It follows  from the  definition of sum of two Banach spaces that  $\b^{(\nu-1)}$ admits a decomposition $\b^{(\nu-1)}=\xi^{(1)} +\xi^{(2)}$ with $\xi^{(1)} \in L_{p_{\nu-1}}(\M;\ell_2^c)$ and  $ \xi^{(2)} \in L_{q_{\nu-1}}(\M;\ell_2^c)$ satisfying the  norm estimate: 
\begin{equation}\label{b-splitting}
\big\| \xi^{(1)} \big\|_{L_{p_{\nu-1}}(\M;\ell_2^c)} +t\big\| \xi^{(2)} \big\|_{L_{q_{\nu-1}}(\M;\ell_2^c)} \leq 2\big\|\b^{(\nu-1)} \big\|_{(L_{p_{\nu-1}} + tL_{q_{\nu-1}})(\M;\ell_2^c)}.
\end{equation}
The important fact here is that $1<p_{\nu-1} <q_{\nu-1} <\infty$.
By applying the noncommutative Stein inequality (\cite{PX}), we may replace $\xi^{(1)}$ and $\xi^{(2)}$ by  adapted sequences  $\zeta^{(1)}=\{\E_n(\xi_n^{(1)})\}_{n\geq 1}$ and $\zeta^{(2)}=\{\E_n(\xi_n^{(2)})\}_{n\geq 1}$ satisfying:
\begin{equation}\label{g-splitting}
\big\| \zeta^{(1)} \big\|_{L_{p_{\nu-1}}(\M;\ell_2^c)} +t\big\| \zeta^{(2)} \big\|_{L_{q_{\nu-1}}(\M;\ell_2^c)} \leq C_{p,q}\big\|\b^{(\nu-1)} \big\|_{(L_{p_{\nu-1}} + tL_{q_{\nu-1}})(\M;\ell_2^c)}. 
\end{equation}
The constant $C_{p,q}$ can be taken to be equal to $2\max\{\g_{p_{\nu-1}}, \g_{q_{\nu-1}}\}$ where $\g_r$ is the constant from the noncommutative Stein inequality for $1<r<\infty$. This justifies that $C_{p,q}$  depends only on $p$ and $q$ since $p_{\nu-1}$ and $q_{\nu-1}$ depend on $p$  and $q$ respectively.

Next, we consider  two sequences of operators by setting:
\[
x^{(1)} =\a^{(1)}\dots \a^{(\nu-1)}.\ \zeta^{(1)}   \quad \text{and} \quad 
x^{(2)} =\a^{(1)}\dots \a^{(\nu-1)}. \ \zeta^{(2)}.
\]
Since $\b^{(\nu-1)}= \zeta^{(1)} +\zeta^{(2)}$, we clearly have  $x =x^{(1)} + x^{(2)}$. We claim  that
 $x^{(1)} \in L_p^{\rm cond}(\M;\ell_2^c)$ and $x^{(2)} \in L_q^{\rm cond}(\M;\ell_2^c)$.   Indeed, let $y^{(1)}=\a^{(2)}\dots \a^{(\nu-1)}. \ \zeta^{(1)}$ and $y^{(2)}=\a^{(2)}\dots \a^{(\nu-1)}. \ \zeta^{(2)}$. The fact that $\zeta^{(1)}$ is adapted and the property  that $\a_{n,j}^{(\nu-1)} \in L_2(\M_n)$ for $1\leq j  \leq n$ implies that 
 $\widehat{\b}^{(\nu-2)}:=\a^{(\nu-1)}\, . \, \zeta^{(1)}$ is an adapted sequence. Moreover, by H\"older's inequality,  we have  $\widehat{\b}^{(\nu-2)} \in L_{p_{\nu-2}(\M;\ell_2^c)}$ with 
 \[
 \big\| \widehat{\b}^{(\nu-2)}\big\|_{L_{p_{\nu-2}}(\M;\ell_2^c)} \leq \big\|\a^{(\nu-1)}\big\|_2 \, . \, \big\| \zeta^{(1)} \big\|_{L_{p_{\nu-1}}(\M;\ell_2^c)}.
 \]
 One  can work backward  and set $\widehat{\b}^{(m-1)} = \a^{(m)}\, .\, \widehat{\b}^{(m)}$ for $2\leq m \leq \nu-2$ 
 to see that $y^{(1)}$ is an adapted column  sequence. Moreover, a repeated use of  H\"older's inequality yields:
 \[
 \big\|y^{(1)}\big\|_{L_{p_1}(\M;\ell_2^c)} \leq \Big(\prod_{m=2}^{\nu-1}  \big\|\a^{(m)}\big\|_2\Big) \big\| \zeta^{(1)}\big\|_{L_{p_{\nu-1}}(\M;\ell_2^c)} <\infty.
 \]
Since $\a^{(1)}$ is strictly lower triangular and $y^{(1)}$ is adapted, it follows that $x^{(1)}= \a^{(1)}\, . \, y^{(1)} \in L_p^{\rm cond}(\M;\ell_2^c)$. Similar argument can be applied to  deduce that  $x^{(2)} \in L_q^{\rm cond}(\M;\ell_2^c)$.  

We now verify that the decomposition $x= x^{(1)} + x^{(2)}$ provides the desired inequality between  the  two $K$-functionals. Indeed, 
 we have  the following norm estimates:
\begin{align*}
K\big(x,t; L_p^{\rm cond}(\M;\ell_2^c), L_q^{\rm cond}(\M;\ell_2^c)\big) &\leq
\big\|x^{(1)} \big\|_{L_p^{\rm cond}(\M;\ell_2^c)} +t \big\|x^{(2)} \big\|_{L_q^{\rm cond}(\M;\ell_2^c)} \\
 &\leq  \Big(\prod_{m=1}^{\nu-1} \big\|\a^{(m)}\big\|_2\Big). \big( \big\|\zeta^{(1)}\big\|_{L_{p_{\nu-1}}(\M;\ell_2^c)} + t\big\|\zeta^{(2)}\big\|_{L_{q_{\nu-1}}(\M;\ell_2^c)} \big)\\
 &\lesssim_{p,q}\Big(  \prod_{m=1}^{\nu-1} \big\|\a^{(m)}\big\|_2\Big) . \, \big\|\b^{(\nu-1)} \big\|_{(L_{p_{\nu-1}} + tL_{q_{\nu-1}})(\M;\ell_2^c)}\\
&\lesssim_{p,q} \Big(\prod_{m=1}^{\nu-1} \big\|\a^{(m)}\big\|_2 \Big) .  \, \big\|\b^{(\nu-1)} \big\|_{L_{\Phi_t^{(p_{\nu-1}, q_{\nu-1})}}(\M;\ell_2^c)}.
\end{align*}
Applying \eqref{ab} successively $(\nu-2)$-times, we get
\[
K\big(x,t; L_p^{\rm cond}(\M;\ell_2^c), L_q^{\rm cond}(\M;\ell_2^c)\big) \lesssim_{p,q} \big\|\a^{(1)}\big\|_2\,  .  \, \big\|\b^{(1)} \big\|_{L_{\Phi_t^{(p_{1}, q_{1})}}(\M;\ell_2^c)}.
\]
By \eqref{ab-1} and  norm equivalence, we arrive at
\[
K\big(x,t; L_p^{\rm cond}(\M;\ell_2^c), L_q^{\rm cond}(\M;\ell_2^c)\big) \lesssim_{p,q}  \big\| \sigma_c(x) \big\|_{(L_p +tL_q)(\M)}.
\]
This   proves the case  where $x \in \mathfrak{F}^0$.

\smallskip

We now consider  an arbitrary  $x=(x_n)_{n\geq 1}  \in \mathfrak{F}$.  Let    $z=(0,x_2, x_3,\dots) \in \mathfrak{F}^0$. By the direct sum \eqref{direct-sum} and the previous case, 
  we have:
\begin{align*}
K\big(x,t; &L_p^{\rm cond}(\M;\ell_2^c), L_q^{\rm cond}(\M;\ell_2^c)\big)\\ &\lesssim_{p,q} 
K\big(x_1, t; L_p^c(\M, \E_1), L_q^c(\M,\E_1) \big)+K\big(z,t; L_p^{,\rm cond}(\M;\ell_2^c), L_q^{\rm cond}(\M;\ell_2^c)\big)\\
&\lesssim_{p,q} K\big(x_1, t; L_p^c(\M,\E_1), L_q^c(\M, \E_1) \big) +K\big(\sigma_c(z), t; L_p(\M), L_q(\M) \big).
\end{align*}

Since $u_{1}(L_p^c(\M,\E_1))$ and $u_{1}(L_q^c(\M,\E_1))$  are one complemented in $L_p(\M_1;\ell_2^c)$ and $L_q(\M_1;\ell_2^c)$ respectively, we have 
\begin{align*}
K\big(x_1, t; L_p^c(\M, \E_1), L_q^c(\M,\E_1) \big) &=K\big(u_1(x_1), t; L_p(\M_1; \ell_2^c), L_q(\M_1;\ell_2^c) \big)\\
&=K\big(|u_1(x_1)|, t; L_p(\M_1; \ell_2^c), L_q(\M_1;\ell_2^c) \big).
\end{align*}
As $| u_1(x_1)| =\E_1(|x_1|^2)^{1/2} \otimes e_{1,1}$,  it follows that 
\[
K\big(x_1, t; L_p^c(\M, \E_1), L_q^c(\M,\E_1) \big) = K\big(\E_1(|x_1|^2)^{1/2}, t; L_p(\M_1), L_q(\M_1) \big).
\]
As  $\E_1(|x_1|^2)^{1/2} \leq \sigma_c(x)$ and $\sigma_c(z)\leq \sigma_c(x)$, we  may conclude that
\[
K\big(x,t; L_p^{\rm cond}(\M;\ell_2^c), L_q^{\rm cond}(\M;\ell_2^c)\big) \lesssim_{p,q} 
K\big(\sigma_c(x), t; L_p(\M), L_q(\M) \big).
\]
The proof is complete.
\end{proof}

Using the fact that for every $0<r \leq \infty$,   $\h_r^c(\M)$  embeds isometrically into a   $1$-complemented subspace of  $L_r^{\rm cond}(\M;\ell_2^c)$, the next result follows     immediately   from Proposition~\ref{Key-lemma}. 

\begin{corollary}\label{Key-lemma-h} Let $\nu$ be a positive integer with 
$\nu\geq 2$. Assume that $2/(\nu +1) <p\leq 2/\nu$ and $p <q< 2/(\nu-1)$. 
If  $y$ is a finite martingale  in $\mathfrak{F}(M)$ then for  every $t>0$,
\[
K\big(y,t; \h_p^c(\M), \h_q^c(\M)\big) \approx_{p,q} K\big(s_c(y), t; L_p(\M), L_q(\M)\big).
\]
\end{corollary}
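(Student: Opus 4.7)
The plan is to deduce the corollary from Proposition~\ref{Key-lemma} by exploiting the two structural facts that link the Hardy space $\h_r^c(\M)$ to the conditioned space $L_r^{\rm cond}(\M;\ell_2^c)$: namely, $\h_r^c(\M)$ embeds isometrically into $L_r^{\rm cond}(\M;\ell_2^c)$ via the difference map $\cal{D}_c$, and by Lemma~\ref{complemented} there is a norm-one projection $\Pi : L_r^{\rm cond}(\M;\ell_2^c) \to \h_r^c(\M)$ with $\Pi\cal{D}_c = \mathrm{id}$. Given $y \in \mathfrak{F}(M)$, the martingale difference sequence $dy = (dy_n)_{n\geq 1}$ lies in $\mathfrak{F}$, and by construction $\sigma_c(dy) = s_c(y)$, so Proposition~\ref{Key-lemma} applied to $dy$ already produces the target comparison at the level of conditioned spaces; the remaining task is just to transfer the two inequalities across the embedding/projection pair.

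For the estimate $K(s_c(y), t; L_p(\M), L_q(\M)) \lesssim_{p,q} K(y, t; \h_p^c(\M), \h_q^c(\M))$, I would take any decomposition $y = y^{(1)} + y^{(2)}$ with $y^{(i)} \in \h_{p_i}^c(\M)$ (where $p_1 = p$, $p_2 = q$) and push it through the composite isometry $U\cal{D}_c : \h_r^c(\M) \hookrightarrow L_r(\M \overline{\otimes} B(\ell_2(\mathbb N^2)))$ discussed in the preliminaries. Since $|U\cal{D}_c(y)| = s_c(y) \otimes e_{1,1} \otimes e_{1,1}$, one has
\[
K(s_c(y), t; L_p(\M), L_q(\M)) = K(U\cal{D}_c(y), t; L_p(\cal{N}), L_q(\cal{N})) \leq K(y, t; \h_p^c(\M), \h_q^c(\M)),
\]
where $\cal{N} = \M \overline{\otimes} B(\ell_2(\mathbb N^2))$. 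This is exactly the easy direction in the proof of Proposition~\ref{Key-lemma}, adapted to the Hardy-space setting, and the constant is $1$.

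For the reverse inequality $K(y, t; \h_p^c(\M), \h_q^c(\M)) \lesssim_{p,q} K(s_c(y), t; L_p(\M), L_q(\M))$, I would apply Proposition~\ref{Key-lemma} to $dy \in \mathfrak{F}$ to obtain
\[
K(dy, t; L_p^{\rm cond}(\M;\ell_2^c), L_q^{\rm cond}(\M;\ell_2^c)) \lesssim_{p,q} K(\sigma_c(dy), t; L_p(\M), L_q(\M)) = K(s_c(y), t; L_p(\M), L_q(\M)).
\]
Then, for any decomposition $dy = a + b$ with $a \in L_p^{\rm cond}(\M;\ell_2^c)$ and $b \in L_q^{\rm cond}(\M;\ell_2^c)$, I would apply the projection $\Pi$ from Lemma~\ref{complemented} termwise, getting
\[
y = \Pi\cal{D}_c(y) = \Pi(dy) = \Pi(a) + \Pi(b),
\]
with $\Pi(a) \in \h_p^c(\M)$, $\Pi(b) \in \h_q^c(\M)$, and norms controlled by those of $a$ and $b$. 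Taking the infimum over such decompositions yields $K(y, t; \h_p^c(\M), \h_q^c(\M)) \leq K(dy, t; L_p^{\rm cond}, L_q^{\rm cond})$, which combined with the previous display closes the argument.

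There is no real obstacle here beyond carefully matching the relevant identifications; the entire combinatorial/analytic difficulty sits in Proposition~\ref{Key-lemma}, whose proof already handles the algebraic-atomic factorizations and the Stein-inequality step on adapted sequences. Consequently, the corollary is essentially a formal consequence, and the writeup should be very short — primarily a verification that $\sigma_c(dy) = s_c(y)$, that $U\cal{D}_c$ is an isometry preserving the modulus up to a rank-one tensor, and that $\Pi$ has norm one and acts as identity on the image of $\cal{D}_c$.
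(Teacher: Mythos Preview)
Your proposal is correct and follows exactly the approach the paper takes: the paper states that the corollary follows immediately from Proposition~\ref{Key-lemma} using that $\h_r^c(\M)$ embeds isometrically into a $1$-complemented subspace of $L_r^{\rm cond}(\M;\ell_2^c)$, which is precisely the embedding $\cal{D}_c$ and the projection $\Pi$ of Lemma~\ref{complemented} that you invoke. Your expanded writeup of this one-line deduction is accurate in all details.
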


\begin{remark}\label{gen-Key-lemma} Since  $\mathfrak{F}$ is  dense in $L_p^{\rm cond}(\M;\ell_2^c) +L_q^{\rm cond}(\M;\ell_2^c)$, we have  the following more general  assertion that under the assumption of Proposition~\ref{Key-lemma},  for every $x \in L_p^{\rm cond}(\M;\ell_2^c) + L_q^{\rm cond}(\M;\ell_2^c)$ and $t>0$,
\begin{equation*}
K\big(x,t; L_p^{\rm cond}(\M;\ell_2^c), L_q^{\rm cond}(\M;\ell_2^c)\big) \approx_{p,q}
K\big(U(x) , t; L_p(\N), L_q(\N)\big)
\end{equation*}
where $\N=\M \overline{\otimes} B(\ell_2(\mathbb N^2))$.

Similarly, since $\mathfrak{F}(M)$ is dense in $\h_p^c(\M) + \h_q^c(\M)$, it follows that  for every $y \in \h_p^c(\M) + \h_q^c(\M)$ and $t>0$,
\begin{equation*}
K\big(y,t; \h_p^c(\M), h_q^c(\M)\big) \approx_{p,q}
K\big(U\cal{D}_c(y) , t; L_p(\N), L_q(\N)\big).
\end{equation*}
\end{remark}

%

\subsection{Proof of Theorem~\ref{main-interpolation}}
We consider first   two important  intermediate cases. One is  the Banach space range and the other is  when the distance between the two indices is small enough. We begin with the latter.
\begin{lemma}\label{main-Step}
Let  $\nu_0 \geq 2$. Consider  $2/(\nu_0+1) <p_0 \leq 2/\nu_0$ and $p_0<p_1 <2/(\nu_0-1)$.  If $0<\theta<1$, $0<\g_0,\g_1, \g \leq \infty$, and $1/r=(1-\theta)/p_0 + \theta/p_1$, then
\begin{equation*}
\h_{r, \g}^c(\M)= \big( \h_{p_0,\g_0}^c(\M) , \h_{p_1,\g_1}^c(\M)\big)_{\theta, \g}
\end{equation*}
with equivalent quasi-norms.
\end{lemma}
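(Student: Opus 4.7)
The plan is to reduce the interpolation to that of $L_p$-spaces on the amplified von Neumann algebra $\N=\M\overline{\otimes} B(\ell_2(\mathbb N^2))$ via the isometric embedding $U\cal{D}_c$ and the $K$-functional equivalence from Remark~\ref{gen-Key-lemma}, and then absorb the Lorentz parameters $\g_0,\g_1$ through a reiteration argument.

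First I would handle the restricted case where both Lorentz parameters at the endpoints are removed, namely
\[
(\h_{p_0}^c(\M), \h_{p_1}^c(\M))_{\theta,\g} = \h_{r,\g}^c(\M),\quad 1/r=(1-\theta)/p_0+\theta/p_1.
\]
By Remark~\ref{gen-Key-lemma}, uniformly in $t>0$,
\[
K(y,t;\h_{p_0}^c,\h_{p_1}^c)\approx_{p_0,p_1} K(U\cal{D}_c(y),t;L_{p_0}(\N),L_{p_1}(\N)),\quad y\in \h_{p_0}^c(\M)+\h_{p_1}^c(\M).
\]
Integrating against $t^{-\theta\g-1}\,dt$ and invoking the classical identity $(L_{p_0}(\N),L_{p_1}(\N))_{\theta,\g}=L_{r,\g}(\N)$ recalled in the preliminaries, I obtain
\[
\|y\|_{(\h_{p_0}^c,\h_{p_1}^c)_{\theta,\g}}\approx_{p_0,p_1}\|U\cal{D}_c(y)\|_{L_{r,\g}(\N)}=\|y\|_{\h_{r,\g}^c(\M)}.
\]
For the underlying set-level identification, given $y\in\h_{r,\g}^c(\M)$ I would use $L_{r,\g}\subseteq L_{p_0}+L_{p_1}$ to write $U\cal{D}_c(y)=X_0+X_1$ with $X_i\in L_{p_i}(\N)$, lift through the isometry $U$ to a splitting $\cal{D}_c(y)=a_0+a_1$ in $L_{p_0}^{\rm cond}(\M;\ell_2^c)+L_{p_1}^{\rm cond}(\M;\ell_2^c)$, and apply the $p$-independent projection $\Pi$ from Lemma~\ref{complemented} to each summand to express $y=\Pi(a_0)+\Pi(a_1)$ with $\Pi(a_i)\in\h_{p_i}^c(\M)$; this justifies the inclusion $\h_{r,\g}^c(\M)\subseteq\h_{p_0}^c(\M)+\h_{p_1}^c(\M)$ needed for the $K$-functional norm to make sense.

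For the general case I invoke the reiteration theorem for real interpolation. Choose auxiliary exponents $s_0\in(2/(\nu_0+1),p_0)$ and $s_1\in(p_1,2/(\nu_0-1))$, both intervals being non-empty by the standing hypothesis, so that the couple $(s_0,s_1)$ still verifies the range restriction of Proposition~\ref{Key-lemma} with the same~$\nu_0$ and satisfies $s_0<p_0<p_1<s_1$. Applying the restricted case twice gives
\[
\h_{p_i,\g_i}^c(\M)=(\h_{s_0}^c(\M),\h_{s_1}^c(\M))_{\eta_i,\g_i},\quad i=0,1,
\]
with $0<\eta_0<\eta_1<1$ uniquely determined by $1/p_i=(1-\eta_i)/s_0+\eta_i/s_1$. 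The standard quasi-Banach reiteration theorem (see e.g.\ \cite{BL,KPS}) then yields
\[
(\h_{p_0,\g_0}^c(\M),\h_{p_1,\g_1}^c(\M))_{\theta,\g}=(\h_{s_0}^c(\M),\h_{s_1}^c(\M))_{\eta,\g},
\]
with $\eta=(1-\theta)\eta_0+\theta\eta_1$. A last application of the restricted case identifies the right-hand side as $\h_{r',\g}^c(\M)$ with $1/r'=(1-\eta)/s_0+\eta/s_1$, and a direct affine computation shows that $r'=r$, concluding the proof.

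The main obstacle will be the bookkeeping in the restricted case: the passage from the abstract $K$-functional equivalence to the set-theoretic equality $\h_{r,\g}^c(\M)=\{y\in\h_{p_0}^c+\h_{p_1}^c:U\cal{D}_c(y)\in L_{r,\g}(\N)\}$ relies on the identification $\h_{L_{p_0}+L_{p_1}}^c(\M)=\h_{p_0}^c(\M)+\h_{p_1}^c(\M)$, which is not recorded explicitly in the preliminaries and has to be extracted from the $p$-independent nature of $U$ and $\Pi$ combined with the third-bullet definition of $\h_E^c(\M)$. Once that point is settled, both steps reduce to routine manipulations with standard interpolation machinery.
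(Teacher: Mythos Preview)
Your approach is the paper's own: reduce to the couple $(L_{p_0}(\N),L_{p_1}(\N))$ via the $K$-functional equivalence of Remark~\ref{gen-Key-lemma}, then invoke reiteration to absorb the secondary Lorentz indices $\g_0,\g_1$; the paper is simply terser on both steps and omits the set-level bookkeeping you flag. One point to tighten: in your set-level argument the splitting $X_0+X_1$ in $L_{p_i}(\N)$ need not lie in the range of $U$, so you should first project onto that range using the contractive projections $\cal{Q}_n$ recorded in the preliminaries (assembled coordinatewise) before lifting to $L_{p_i}^{\rm cond}(\M;\ell_2^c)$ and applying~$\Pi$.
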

\begin{proof}
This will be  deduced  from Corollary~\ref{Key-lemma-h} (see also Remark~\ref{gen-Key-lemma}) and the description of noncommutative Lorentz spaces as real interpolation of  the couple $(L_{p_0}(\M), L_{p_1}(\M))$.  Indeed,  if $y \in \h_{r,\g}^c(\M)$, then  we have:
\begin{align*}
\big\| y \big\|_{\h_{r,\gamma}^c}&=\big\| U\cal{D}_c(y)\big\|_{L_{r,\g}(\N)}\\
 &\approx \big\| U\cal{D}_c(y)\big\|_{(L_{p_0}(\N), L_{p_1}(\N))_{\theta, \gamma} }\\
&\approx \big\| y \big\|_{(\h_{p_0}^c(\M),\h_{p_1}^c(\M))_{\theta,\g}}
\end{align*}
where the last equivalence comes from the comparison of $K$-functionals stated in Remark~\ref{gen-Key-lemma}.
 This  clearly shows that
$\h_{r, \g}^c(\M)= \big( \h_{p_0}^c(\M) , \h_{p_1}^c(\M)\big)_{\theta, \g}$. The full generality as stated in the lemma  follows by reiteration.
\end{proof}


The next lemma is the infinite version of the Banach space case.
It will be deduced from Lemma~\ref{main-Step} and Wolff's interpolation theorem.  Since our approach differs from \cite{Bekjan-Chen-Perrin-Y}, we include the details.  
\begin{lemma}\label{banach}
Let $1<p<r<\infty$. 
 If $1/r=(1-\theta)/p$  then 
\begin{equation*}
\h_{r, \g}^c(\M)= \big( \h_p^c(\M) , \bmo^c(\M)\big)_{\theta, r}
\end{equation*}
with equivalent norms.
\end{lemma}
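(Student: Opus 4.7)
My plan is to apply Wolff's interpolation theorem to the quadruple
\[
B_1=\h_1^c(\M),\qquad B_2=\h_p^c(\M),\qquad B_3=\h_r^c(\M),\qquad B_4=\bmo^c(\M),
\]
combining the Banach-range interpolation identity between two conditioned Hardy spaces (delivered by Lemma~\ref{main-Step} together with Lemma~\ref{union}) with the endpoint theorem $(\h_1^c(\M),\bmo^c(\M))_{1-1/s,\,s}=\h_s^c(\M)$ due to Bekjan--Chen--Perrin--Y. The density assumption required by Wolff's theorem, namely $B_1\cap B_4$ dense in $B_2$ and $B_3$, is satisfied because the space of finite martingales $\mathfrak{F}(M)$ lies inside $\h_1^c(\M)\cap\bmo^c(\M)$ and is dense in each $\h_s^c(\M)$ for $1<s<\infty$.

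I would then verify the first Wolff hypothesis
\[
\h_p^c(\M)=\big(\h_1^c(\M),\,\h_r^c(\M)\big)_{\phi,\,p},\qquad \phi=\frac{1-1/p}{1-1/r}.
\]
For $r<2$ this is immediate from Lemma~\ref{main-Step} extended by Lemma~\ref{union} to a real-interpolation scale on $(0,2)$. For $r\ge 2$ I would instead invoke Remark~\ref{gen-Key-lemma} to compare the $K$-functional of the couple $(\h_1^c(\M),\h_r^c(\M))$ with that of $(L_1(\N),L_r(\N))$ on $\N=\M\overline{\otimes}B(\ell_2(\mathbb{N}^2))$, and then appeal to the classical identification of real-interpolation spaces of a couple of noncommutative $L_p$-spaces as Lorentz spaces, combined with Lemma~\ref{complemented} which realizes $\h_p^c(\M)$ as a $1$-complemented subspace of $L_p(\N)$.

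For the second Wolff hypothesis, which is the target identity $\h_r^c(\M)=(\h_p^c(\M),\bmo^c(\M))_{\theta,r}$, I would invoke the reiteration consequence of Wolff's theorem in the degenerate form where one intermediate space coincides with an endpoint of the outer couple. Concretely, BCPY gives $\h_p^c(\M)=(\h_1^c(\M),\bmo^c(\M))_{1-1/p,\,p}$, and reiteration within the couple $(\h_1^c(\M),\bmo^c(\M))$ yields
\[
\big(\h_p^c(\M),\,\bmo^c(\M)\big)_{\theta,r}=\big(\h_1^c(\M),\,\bmo^c(\M)\big)_{(1-\theta)(1-1/p)+\theta,\,r}.
\]
The hypothesis $1/r=(1-\theta)/p$ simplifies the parameter to $1-1/r$, and a second appeal to BCPY identifies this last space with $\h_r^c(\M)$.

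The main obstacle lies in the rigorous justification of the degenerate reiteration step: when the upper endpoint of the iterated couple $(\h_p^c(\M),\bmo^c(\M))$ coincides with the upper endpoint $\bmo^c(\M)$ of the outer couple, the standard reiteration theorem does not apply verbatim, and this is precisely the configuration in which Wolff's theorem supplies the necessary $K$-functional control. A secondary difficulty is extending the first Wolff hypothesis past the threshold $r=2$, where the tools of Lemma~\ref{main-Step} and Proposition~\ref{Key-lemma} no longer apply directly; this requires a careful $K$-functional comparison through the isometric $1$-complemented embedding $U\cal{D}_c\colon\h_r^c(\M)\hookrightarrow L_r(\N)$.
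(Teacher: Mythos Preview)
Your Wolff setup is structurally off. With $B_1=\h_1^c$, $B_2=\h_p^c$, $B_3=\h_r^c$, $B_4=\bmo^c$, Wolff's theorem would take the two hypotheses $B_2=(B_1,B_3)_{\phi,p}$ and $B_3=(B_2,B_4)_{\theta,r}$ and output $B_2=(B_1,B_4)_{\xi,p}$ and $B_3=(B_1,B_4)_{\zeta,r}$. But the second hypothesis you list is precisely the lemma you are trying to prove, and the output of Wolff is an $(\h_1^c,\bmo^c)$-identity, not the desired $(\h_p^c,\bmo^c)$-identity. So the Wolff wrapper is circular and does no work.

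What you actually have, buried in the paragraph on the ``second Wolff hypothesis,'' is a complete and correct proof without any appeal to Wolff: quote \cite{Bekjan-Chen-Perrin-Y} for $\h_s^c(\M)=(\h_1^c(\M),\bmo^c(\M))_{1-1/s,\,s}$, then apply one-sided reiteration with $\theta_1=1$. Your worry about the ``degenerate reiteration step'' is unfounded: $\bmo^c(\M)=A_1$ is trivially of class $\mathcal{C}(1;A_0,A_1)$ (both the $K$- and $J$-conditions hold with constant $1$), so the standard reiteration theorem (e.g.\ \cite[Theorem~3.5.3]{BL}, which allows $0\le\theta_i\le1$) applies directly and gives exactly
\[
\big(\h_p^c(\M),\bmo^c(\M)\big)_{\theta,r}=\big(\h_1^c(\M),\bmo^c(\M)\big)_{1-1/r,\,r}=\h_r^c(\M).
\]

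The paper deliberately takes a different route and does \emph{not} invoke the main result of \cite{Bekjan-Chen-Perrin-Y}. Instead it first proves $\h_r^c(\M)=(\h_1^c(\M),\h_q^c(\M))_{\psi,r}$ for all $1<r<q<\infty$ by combining Lemma~\ref{main-Step} with Wolff's theorem (three cases according to the positions of $r$ and $q$ relative to $2$), and then \emph{dualizes}: using $(\h_1^c(\M))^*=\bmo^c(\M)$ and $(\h_v^c(\M))^*=\h_{v'}^c(\M)$ together with the duality theorem for real interpolation, one obtains the lemma. The paper's approach is thus self-contained modulo its own machinery, while yours is shorter but imports \cite{Bekjan-Chen-Perrin-Y} as a black box.

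A final remark: your sketch of the first Wolff hypothesis for $r\ge 2$ has a genuine gap. Remark~\ref{gen-Key-lemma} (equivalently Proposition~\ref{Key-lemma}) only compares $K$-functionals for the couple $(\h_p^c,\h_q^c)$ when both indices lie in a short window below $2$; it says nothing about $(\h_1^c,\h_r^c)$ with $r\ge2$. And you cannot recover the comparison by complementation, because $\h_1^c(\M)$ is not complemented in $L_1(\N)$. This is moot for the lemma at hand (since Wolff is unnecessary), but it is exactly the obstacle the paper's three-case argument is designed to get around.
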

\begin{proof}  We will verify first that if $1<r<q<\infty$ then 
\begin{equation}\label{1-q}
\h_r^c(\M)=\big(\h_1^c(\M), \h_q^c(\M)\big)_{\psi, r}
\end{equation}
for $1/r=(1-\psi) +\psi/q$. We separate the proof of \eqref{1-q}  into three  cases:

$\bullet$ $1<r<q<2$. This  follows  immediately from  
 using $\nu_0=2$,  $p_0=1$, and $p_1=q$  in Lemma~\ref{main-Step}.

$\bullet$ $1<r<2\leq q<\infty$. Fix $v$ such that $1<r<v<2\leq q$. Applying the previous case, we have 
\[
\h_r^c(\M)=\big(\h_1^c(\M), \h_v^c(\M)\big)_{\psi_1, r}
\]
for $1/r=(1-\psi_1) +\psi_1/v$. On the other hand, since for every $1<s<\infty$, $\h_s^c(\M)$ embeds complementably into $L_s(\M \overline{\otimes} B(\ell_2(\mathbb{N}^2)))$, we have for
$1<r<v< q<\infty$  and $1/v= (1-\psi_2)/r + \psi_2/q$ that 
\[
\h_v^c(\M)=\big(\h_r^c(\M), \h_q^c(\M)\big)_{\psi_2, v}.
\]

 Set $B_1=\h_1^c(\M)$,  $B_2=\h_r^c(\M)$, $B_3=\h_v^c(\M)$, and $B_4=\h_q^c(\M)$. It is clear  that $B_1\cap B_4$ is dense in both $B_2$ and $B_3$. Applying Wolff's interpolation theorem,
we  deduce \eqref{1-q}   with $\psi=\psi_1\psi_2(1-\psi_2 +\psi_1\psi_2)^{-1}$. One  can easily verify that this is  the desired index.

$\bullet$  $2\leq r<q<\infty$. Fix $1<u <2 \leq r<q$ and write
\[
\h_r^c(\M)=\big(\h_u^c(\M), \h_q^c(\M)\big)_{\theta_1, r}.
\]
Next, we have from the previous case that
\[
\h_u^c(\M)=\big(\h_1^c(\M), \h_q^c(\M)\big)_{\theta_2, u}.
\]
Applying Wolff's interpolation theorem with  $B_1=\h_1^c(\M)$, $B_3=\h_r^c(\M)$, $B_2=\h_u^c(\M)$, and $B_4= h_q^c(\M)$, the desired interpolation follows with $\psi= \theta_1(1-\theta_2 +\theta_2\theta_1)^{-1}$. This proves \eqref{1-q}.

Using  the  description of the dual of $\h_1^c(\M)$ from  \eqref{h1-dual} and the well-known  fact that $(\h_v^c(\M))^*=\h_{v'}^c(\M)$ for $1<v<\infty$ and $v'$ is its conjugate index  (\cite{JX}),  we obtain from \eqref{1-q} and the duality for interpolation (\cite[Theorem~3.7.1]{BL})  that if $1<p<r<\infty$, then 
\[
\h_r^c(\M) = \big(\h_p^c(\M), \bmo^c(\M)\big)_{\theta, r}
\]
which is the desired conclusion.
\end{proof}


\medskip

We are now ready to present the proof of Theorem~\ref{main-interpolation}.
 
For $0<p\leq \infty$, 
let $A_{p,\g} := \h_{p,\g}^c(\M)$ when  $0<p<\infty$  and $A_{\infty, \g}:=\bmo^c(\M)$.
Consider the sequence of intervals $(I_\nu)_{\nu\geq 1}$ with $I_1=(1,\infty]$ and 
for $\nu\geq 2$, 
\[
I_\nu= (\frac{2}{\nu+1}, \frac{2}{\nu-1}).
\]
 For a given $ \nu\geq 2$, it follows from Lemma~\ref{main-Step} that the family $\{ A_{p,\g}\}_{p,\g \in (0,\infty]}$ forms a real-interpolation scale on the interval $I_\nu$. On the other hand, by reiteration, Lemma~\ref{banach} gives that the family $\{ A_{p,\g}\}_{p,\g \in (0,\infty]}$ forms a real-interpolation scale on the interval $I_1$.

Next, we have $I_1 \cap I_2= (1,2)$ and for $\nu \geq 2$, $I_\nu \cap I_{\nu+1}= (2/(\nu+1), 2/\nu]$. By  applying Lemma~\ref{union} inductively,  we deduce that the family $\{ A_{p,\g}\}_{p,\g \in (0,\infty]}$ forms a real-interpolation scale on the interval $\bigcup_{\nu=1}^\infty I_\nu = (0,\infty)$ which is the desired conclusion.
\qed

\medskip

Adapting the argument above by using Proposition~\ref{Key-lemma} in place of Corollary~\ref{Key-lemma-h}, we also obtain the corresponding result at the level of conditioned spaces.

\begin{proposition}\label{interpolation-conditioned}
If $0<\theta <1$, $0<p, q<\infty$, and  $0<\g\leq \infty$,  then for $1/{r}=(1-\theta)/{p} + {\theta}/q$, 
\[
\big( L_p^{\rm cond}(\M; \ell_2^c), L_q^{\rm cond}(\M; \ell_2^c)\big)_{\theta, \g} = L_{r,\g}^{\rm cond}(\M;\ell_2^c),
\]
with equivalent  quasi-norms.
\end{proposition}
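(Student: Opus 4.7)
The plan is to adapt the proof of Theorem~\ref{main-interpolation}, substituting Proposition~\ref{Key-lemma} (in the form of Remark~\ref{gen-Key-lemma}) for Corollary~\ref{Key-lemma-h}. Set $A_{p,\gamma}:=L_{p,\gamma}^{\rm cond}(\M;\ell_2^c)$ for $0<p<\infty$ and $0<\gamma\leq\infty$, where the Lorentz conditioned space is defined through the isometry $U$ into $L_{p,\gamma}(\N)$ with $\N=\M\,\overline{\otimes}\,B(\ell_2(\mathbb{N}^2))$. Since $A_{p,p}=L_p^{\rm cond}(\M;\ell_2^c)$, the proposition is the case $\gamma_0=p_0$, $\gamma_1=p_1$ of the general real-interpolation identity within this family.

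The first step is a conditioned-space analog of Lemma~\ref{main-Step}. Fix $\nu\geq 2$ and $p_0<p_1$ both in $I_\nu=(2/(\nu+1),2/(\nu-1))$. For $x\in A_{p_0,p_0}+A_{p_1,p_1}$ and $t>0$, Remark~\ref{gen-Key-lemma} yields
\[
K\big(x,t;A_{p_0,p_0},A_{p_1,p_1}\big)\approx_{p_0,p_1} K\big(U(x),t;L_{p_0}(\N),L_{p_1}(\N)\big).
\]
Integrating against $t^{-\theta\gamma}\,dt/t$ (or taking the supremum when $\gamma=\infty$) and invoking the classical identification $(L_{p_0}(\N),L_{p_1}(\N))_{\theta,\gamma}=L_{r,\gamma}(\N)$, with $1/r=(1-\theta)/p_0+\theta/p_1$, together with the definition of $A_{r,\gamma}$ via $U$, gives $A_{r,\gamma}=(A_{p_0,p_0},A_{p_1,p_1})_{\theta,\gamma}$. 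Reiterating this identity inside $I_\nu$ (as at the end of the proof of Lemma~\ref{main-Step}) upgrades it to arbitrary $\gamma_0,\gamma_1\in(0,\infty]$, so $\{A_{p,\gamma}\}$ is a real interpolation scale on $I_\nu$.

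The second step is to establish the scale property on the Banach range $J=(1,\infty)$, since Proposition~\ref{Key-lemma} only covers indices strictly less than $2$. For $1<p<\infty$ the image $U(L_p^{\rm cond}(\M;\ell_2^c))$ is complemented in $L_p(\N)$ with projection bounds locally uniform in $p$; the global projection is assembled by applying, column by column along the $e_{n,1}$ direction of the $B(\ell_2(\mathbb{N}^2))$ factor, the per-level contractive projection $\mathcal{Q}_{n-1}$ of \cite{Ju} in combination with the noncommutative Stein inequality in the $n$-direction. By standard interpolation of the projection bound, complementation transfers to the Lorentz scale, and the classical real-interpolation identity for the couple $(L_{p_0}(\N),L_{p_1}(\N))$ then transfers to give that $\{A_{p,\gamma}\}$ is a real interpolation scale on $J$. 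Finally, the intervals $I_\nu$ for $\nu\geq 2$ are pairwise overlapping with $I_\nu\cap I_{\nu+1}=(2/(\nu+1),2/\nu]$, and $J\cap I_2=(1,2)$ is also of positive length, so an inductive application of Lemma~\ref{union} extends the scale property to $J\cup\bigcup_{\nu\geq 2}I_\nu=(0,\infty)$, which is the statement of the proposition.

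The main obstacle is the Banach-range step: one must verify carefully that the per-level projections of \cite{Ju}, together with the Stein inequality, assemble into a global projection on $L_p(\N)$ whose norm is bounded uniformly on compact sub-intervals of $(1,\infty)$, so that the classical interpolation of $L_p$-spaces transfers cleanly to the conditioned spaces. Once this is granted, the rest of the argument is a nearly mechanical transcription of the corresponding steps in the proof of Theorem~\ref{main-interpolation}.
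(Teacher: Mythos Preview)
Your proposal is correct and is precisely the adaptation the paper's one-sentence proof intends: replace Corollary~\ref{Key-lemma-h} by Proposition~\ref{Key-lemma} in Lemma~\ref{main-Step}, use complementation of $U(L_s^{\rm cond}(\M;\ell_2^c))$ in $L_s(\N)$ for $1<s<\infty$ in place of the complementation of $\h_s^c(\M)$ used in the proof of \eqref{1-q}, and then run Lemma~\ref{union} over the same intervals (with $I_1=(1,\infty)$ since no $\bmo$-type endpoint is needed here). The complementation you flag as an obstacle is not a gap: the column projection, followed by the slice-wise map $\xi_n\mapsto \cal{Q}_{n-1}\big((\E_{n-1}\otimes Id)\xi_n\big)$, is bounded on $L_p(\N)$ for $1<p<\infty$ because the Stein inequality controls the conditional-expectation step and the pointwise inequality $\cal{Q}_{n-1}(\xi)^*\cal{Q}_{n-1}(\xi)\le\xi^*\xi$ recorded in the paper makes the $\cal{Q}_{n-1}$-step contractive on the column norm.
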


\subsection{Spaces of adapted sequences}
In this subsection, we apply ideas used for the case of conditioned Hardy spaces to the family of spaces of adapted sequences. We first observe that
by the noncommutative Stein inequality (\cite{PX}), the space of adapted sequences $L_p^{\rm ad}(\M;\ell_2^c)$ is complemented in $L_p(\M;\ell_2^c)$ when $1<p<\infty$. Thus,  it is rather an easy task to see that the family $\{ L_p^{\rm ad}(\M;\ell_2^c)\}_{1<p<\infty}$ forms interpolation scales.
The next result extends  this fact to the full range $0<p<\infty$. 

\begin{theorem}\label{interpolation-adapted}
If $0<\theta <1$, $0<p, q<\infty$, and  $0<\g\leq \infty$,  then for $1/{r}=(1-\theta)/{p} + {\theta}/q$, 
\[
\big( L_p^{\rm ad}(\M; \ell_2^c), L_q^{\rm ad}(\M; \ell_2^c)\big)_{\theta, \g} = L_{r,\g}^{\rm ad}(\M;\ell_2^c),
\]
with equivalent quasi-norms.
\end{theorem}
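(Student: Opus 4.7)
The plan is to mirror the proof of Theorem~\ref{main-interpolation} (and Proposition~\ref{interpolation-conditioned}) line by line, replacing conditioned spaces everywhere by spaces of adapted sequences and substituting Proposition~\ref{factorization-adapted} (the algebraic atomic decomposition for adapted sequences) for Theorem~\ref{main-algebraic}. The backbone is a $K$-functional comparison: for $\nu\geq 2$, $2/(\nu+1)<p\leq 2/\nu$, and $p<q<2/(\nu-1)$, show that for every finite adapted sequence $x$ (hence by density every $x\in L_p^{\rm ad}(\M;\ell_2^c)+L_q^{\rm ad}(\M;\ell_2^c)$),
\[
K\bigl(x,t; L_p^{\rm ad}(\M;\ell_2^c), L_q^{\rm ad}(\M;\ell_2^c)\bigr)\approx_{p,q} K\bigl(\mathcal{S}_c(x), t; L_p(\M), L_q(\M)\bigr).
\]
The $\gtrsim$ direction is immediate from the isometric embedding $x\mapsto \sum_n x_n\otimes e_{n,1}$ of $L_r^{\rm ad}(\M;\ell_2^c)$ into $L_r(\M\overline{\otimes}B(\ell_2(\mathbb N)))$, since the modulus of this image is $\mathcal{S}_c(x)\otimes e_{1,1}$.

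For the reverse direction, I would iterate Proposition~\ref{factorization-adapted} exactly as in the proof of Proposition~\ref{Key-lemma}. Using the indices $1/p_{m-1}=1/2+1/p_m$, $1/q_{m-1}=1/2+1/q_m$, and the factorizations $L_{\Phi_t^{(p_{m-1},q_{m-1})}}=L_2\odot L_{\Phi_t^{(p_m,q_m)}}$, successive application of Proposition~\ref{factorization-adapted} produces a factorization $x=\alpha^{(1)}\cdots\alpha^{(\nu-1)}\cdot\beta^{(\nu-1)}$, where each $\alpha^{(m)}$ is lower triangular with entries $\alpha^{(m)}_{n,j}\in L_2(\M_n)$ for $j\leq n$, and $\beta^{(\nu-1)}$ is an adapted sequence lying in $L_{\Phi_t^{(p_{\nu-1},q_{\nu-1})}}^{\rm ad}(\M;\ell_2^c)$ with controlled product of norms. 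Since $1<p_{\nu-1}<q_{\nu-1}<\infty$, I then split $\beta^{(\nu-1)}=\xi^{(1)}+\xi^{(2)}$ in $L_{p_{\nu-1}}(\M;\ell_2^c)+t\,L_{q_{\nu-1}}(\M;\ell_2^c)$ and apply the noncommutative Stein inequality to replace $\xi^{(i)}$ by adapted sequences $\zeta^{(i)}$ without enlarging the norms beyond a $p,q$-dependent factor. The crucial property $\alpha^{(m)}_{n,j}\in L_2(\M_n)$ guarantees that the products $x^{(i)}:=\alpha^{(1)}\cdots\alpha^{(\nu-1)}\cdot\zeta^{(i)}$ remain adapted sequences, entry by entry via H\"older's inequality, so $x=x^{(1)}+x^{(2)}$ is a decomposition in $L_p^{\rm ad}(\M;\ell_2^c)+t\,L_q^{\rm ad}(\M;\ell_2^c)$ whose total norm is controlled by $\|\mathcal S_c(x)\|_{(L_p+tL_q)(\M)}$, as in Proposition~\ref{Key-lemma}.

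Once the $K$-functional comparison is established, the real-interpolation identity
\[
\bigl(L_p^{\rm ad}(\M;\ell_2^c), L_q^{\rm ad}(\M;\ell_2^c)\bigr)_{\theta,\gamma}=L_{r,\gamma}^{\rm ad}(\M;\ell_2^c),\qquad 1/r=(1-\theta)/p+\theta/q,
\]
follows for $p,q\in I_\nu:=(2/(\nu+1),2/(\nu-1))$ exactly as in Lemma~\ref{main-Step}, using the well-known description of Lorentz spaces as real interpolants of the couple $(L_p(\M),L_q(\M))$ together with the isometric embedding of $L_r^{\rm ad}(\M;\ell_2^c)$ into $L_r(\M\overline{\otimes}B(\ell_2(\mathbb N)))$. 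For the Banach range $1<p<q<\infty$, the noncommutative Stein inequality makes $L_p^{\rm ad}(\M;\ell_2^c)$ a $1$-complemented subspace of $L_p(\M;\ell_2^c)$, whose interpolation is classical, so the identity holds there as well. Since each $I_\nu$ overlaps $I_{\nu+1}$ in the nontrivial interval $(2/(\nu+1),2/\nu]$ and $I_1:=(1,\infty)$ overlaps $I_2$ in $(1,2)$, an inductive application of Lemma~\ref{union} glues all these scales into one scale on the full range $(0,\infty)$, which is the desired conclusion.

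The main obstacle is the $K$-functional comparison, and within it the preservation of adaptedness through the iterated factorizations. In the conditioned case (Proposition~\ref{Key-lemma}) this was delicate because $\beta^{(\nu-1)}$ itself was adapted while the lower triangular matrices were merely \emph{strictly} lower triangular; here the situation is dually arranged (lower triangular with $\alpha^{(m)}_{n,j}\in L_2(\M_n)$ and $\beta^{(\nu-1)}$ adapted), so one must carefully track that composing a lower triangular matrix whose $(n,j)$-entries are adapted at level $n$ with an adapted column yields an adapted column. Once this bookkeeping is carried out, the rest of the argument is a direct transcription of the conditioned case via Wolff's theorem and Lemma~\ref{union}.
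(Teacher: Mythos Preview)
Your proposal is correct and follows essentially the same approach as the paper: the paper proves Proposition~\ref{key-lemma-ad} by replacing Theorem~\ref{main-algebraic} with Proposition~\ref{factorization-adapted} in the first step of the iteration from Proposition~\ref{Key-lemma}, then derives the analog of Lemma~\ref{main-Step} on each interval $I_\nu$, handles the Banach range $(1,\infty)$ via Stein complementation, and glues the scales together with Lemma~\ref{union}. The only minor slip is that Stein gives complementation rather than $1$-complementation of $L_p^{\rm ad}(\M;\ell_2^c)$ in $L_p(\M;\ell_2^c)$, but this does not affect the argument.
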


As in the case of conditioned spaces, the proof is based on estimates of $K$-functionals. We observe first that since $L_r(\M;\ell_2^c)$ is a $1$-complemented subspace of $L_r(\M \overline{\otimes} B(\ell_2))$ for every $0<r<\infty$, one can easily compute the $K$-functionals for the couple $(L_p(\M;\ell_2^c),L_q(\M;\ell_2^c))$. Adapting the argument used in  the proof of Proposition~\ref{Key-lemma} (using Proposition~\ref{factorization-adapted} in place of Theorem~\ref{main-algebraic} in the first step), we obtain the corresponding result for  spaces of adapted sequences. More precisely:

\begin{proposition}\label{key-lemma-ad} Let $\nu$ be a positive integer with $\nu\geq 2$. Assume that $2/(\nu +1) <p\leq 2/\nu$ and $p <q <2/(\nu-1)$. 
For every sequence $a \in  L_p^{\rm ad}(\M;\ell_2^c)+ L_q^{\rm ad}(\M;\ell_2^c)$ and  every $t>0$, the following holds:
\[
K\big(a,t; L_p^{\rm ad}(\M;\ell_2^c), L_q^{\rm ad}(\M;\ell_2^c)\big) \approx_{p,q} K\big( \cal{S}_c(a), t; L_p(\M), L_q(\M)\big).
\]
\end{proposition}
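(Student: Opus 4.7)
I would prove the two directions separately, mirroring the structure of the proof of Proposition~\ref{Key-lemma}. For the easy inequality $K(\cal{S}_c(a), t; L_p(\M), L_q(\M)) \leq K(a, t; L_p^{\rm ad}(\M;\ell_2^c), L_q^{\rm ad}(\M;\ell_2^c))$, I would use that for $0<r\leq \infty$ the map $(b_n)_{n\geq 1} \mapsto \sum_n b_n \otimes e_{n,1}$ isometrically embeds $L_r^{\rm ad}(\M;\ell_2^c)$ into $L_r(\M \overline{\otimes} B(\ell_2(\mathbb{N})))$, and that the modulus of its image equals $\cal{S}_c(b) \otimes e_{1,1}$. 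Transporting $K$-functionals through this isometry converts any splitting $a = a^{(1)}+a^{(2)}$ into a splitting of $\cal{S}_c(a)$ with the same norm control, exactly as in the first bullet of the proof of Proposition~\ref{Key-lemma}.

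For the harder direction, I would carry out the same multi-step factorization as in the proof of Proposition~\ref{Key-lemma}, but invoke Proposition~\ref{factorization-adapted} at \emph{every} step in place of Theorem~\ref{main-algebraic}. With the same chain of indices $p_0=p$, $q_0=q$, $1/p_m = 1/2 + 1/p_{m-1}$ and $1/q_m = 1/2 + 1/q_{m-1}$ for $1 \leq m \leq \nu-1$, the factorization $L_{\Phi_t^{(p_{m-1},q_{m-1})}} = L_2 \odot L_{\Phi_t^{(p_m,q_m)}}$ together with the properties of $\psi_t^{(p,q)}$ established in Lemma~\ref{psi-prop2} allows Proposition~\ref{factorization-adapted} to produce $a = \alpha^{(1)} \cdot \beta^{(1)}$ and $\beta^{(m-1)} = \alpha^{(m)} \cdot \beta^{(m)}$ for $2 \leq m \leq \nu-1$, where each $\alpha^{(m)}$ is lower triangular with $\alpha^{(m)}_{n,j} \in L_2(\M_n)$ for $1 \leq j \leq n$, and each $\beta^{(m)}$ is adapted in $L_{\Phi_t^{(p_m,q_m)}}^{\rm ad}(\M;\ell_2^c)$, with product norm bounds analogous to \eqref{ab-1}--\eqref{ab}. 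Since $p_{\nu-1}, q_{\nu-1} > 1$, I would then split $\beta^{(\nu-1)}$ in the sum $L_{p_{\nu-1}}(\M;\ell_2^c) + tL_{q_{\nu-1}}(\M;\ell_2^c)$, apply the noncommutative Stein inequality to replace the two pieces by adapted sequences $\zeta^{(i)} = (\E_n(\xi_n^{(i)}))_{n \geq 1}$ at the cost of a constant, and set $a^{(i)} = \alpha^{(1)} \cdots \alpha^{(\nu-1)} \cdot \zeta^{(i)}$ for $i=1,2$.

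The central point to verify---and the reason Proposition~\ref{factorization-adapted} must replace Theorem~\ref{main-algebraic} even at the first step---is that each $a^{(i)}$ is itself an \emph{adapted} sequence, not merely an element of $L_{p_i}(\M;\ell_2^c)$. This reduces to the simple observation that whenever $\alpha$ is a lower triangular matrix with $\alpha_{n,j}$ affiliated with $\M_n$ for $1 \leq j \leq n$ and $y$ is an adapted column, the $n$-th component $(\alpha \cdot y)_n = \sum_{j \leq n} \alpha_{n,j} y_j$ is again affiliated with $\M_n$; iterating through $\alpha^{(1)} \cdots \alpha^{(\nu-1)} \cdot \zeta^{(i)}$ preserves adaptedness at each step. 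Once adaptedness is secured, the chain of H\"older estimates from the proof of Proposition~\ref{Key-lemma} yields the desired bound on $K(a,t; L_p^{\rm ad}(\M;\ell_2^c), L_q^{\rm ad}(\M;\ell_2^c))$ in terms of $K(\cal{S}_c(a), t; L_p(\M), L_q(\M))$. Density of finite sequences in $L_p^{\rm ad}(\M;\ell_2^c) + L_q^{\rm ad}(\M;\ell_2^c)$ then extends the estimate to arbitrary $a$ in the sum, completing the proof.
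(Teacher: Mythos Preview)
Your proposal is correct and follows exactly the route the paper indicates: adapt the proof of Proposition~\ref{Key-lemma} by invoking Proposition~\ref{factorization-adapted} at every factorization step (including the first), and then observe that the lower-triangular-with-$\M_n$-entries property~(ii) guarantees the resulting pieces $a^{(i)}$ remain adapted. One small slip: your recursion should read $1/p_{m-1}=1/2+1/p_m$ (so that the indices $p_m$ increase and eventually exceed $1$), not the reverse; your later use of $p_{\nu-1},q_{\nu-1}>1$ shows you have the right picture despite the typo.
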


\begin{proof}[Sketch of the proof  of Theorem~\ref{interpolation-adapted}] First, we use Proposition~\ref{key-lemma-ad} to deduce the corresponding result to Lemma~\ref{main-Step}.
Fix $\nu_0 \geq 2$. Assume that  $2/(\nu_0+1) <p_0 \leq 2/\nu_0$ and $p_0<p_1 <2/(\nu_0-1)$.  If $0<\theta<1$, $0<\g_0,\g_1, \g \leq \infty$, and $1/r=(1-\theta)/p_0 + \theta/p_1$, then
\begin{equation}\label{adapted-1}
L_{r, \g}^{\rm ad}(\M;\ell_2^c)= \big( L_{p_0,\g_0}^{\rm ad}(\M;\ell_2^c) , L_{p_1,\g_1}^{\rm ad}(\M ; \ell_2^c)\big)_{\theta, \g}.
\end{equation}
Next,  we deduce the  Banach space range using complementation:
if  $1< p<r  <q<\infty$ and $1/r=(1-\psi)/p + \psi/q$,  then 
\begin{equation}\label{adapted-2}
L_r^{\rm ad}(\M;\ell_2^c) =\big( L_{p}^{\rm ad}(\M;\ell_2^c) , L_{q}^{\rm ad}(\M:\ell_2^c)\big)_{\psi, r}.
\end{equation} 
Using  \eqref{adapted-1} and \eqref{adapted-2}, we can repeat the inductive argument used in the proof of Theorem~\ref{main-interpolation} with   the intervals $I_1=(1,\infty)$
and $I_\nu=({2}/(\nu +1), {2}/(\nu-1))$ for $\nu\geq 2$,
to conclude that the family  $\{L_p^{\rm ad}(\M;\ell_2^c)\}_{p\in(0, \infty); \g \in (0,\infty]}$ forms a real interpolation scale.
 \end{proof}

Assume that $1\leq  p<\infty$. We recall that the map $\cal{D}(x)= (dx_n)_{n\geq 1}$ is an isometric embedding of $\H_p^c(\M)$ into  $L_p^{\rm ad}(\M;\ell_2^c)$. 
Using  the noncommutative Stein inequality  when  $1<p<\infty$ and the noncommutative L\'epingle-Yor  inequality when $p=1$ (\cite{Qiu1}), the linear map 
\[
\Pi \big( (a_n)_{n\geq 1}\big)= (a_n-\E_{n-1}(a_n))_{n\geq 1}
\]
is  simultaneously bounded from $L_p^{\rm ad}(\M;\ell_2^c)$ onto $\cal{D}\big(\H_p^c(\M)\big)$ for all $1\leq p<\infty$.
   As a result, one can  immediately deduce from Theorem~\ref{interpolation-adapted}  that  the interpolation 
   \[(\H_1^c(\M), \H_q^c(\M))_{\theta, r} = \H_r^c(\M)
   \]
    for $0<\theta<1$, $1<q<\infty$,  and $ 1/r=(1-\theta) +\theta/q$ holds.  Due to  this fact, we may view Theorem~\ref{interpolation-adapted} as an extension of  the real  interpolation version of \cite{Musat-inter} to the quasi-Banach space range.
However, when 
 $0<p<1$,   the spaces of adapted  sequences cannot be replaced by column  martingale Hardy spaces. In fact, only  one inclusion holds if  column martingale Hardy spaces are used. The next  result may be viewed as a    noncommutative  generalization of 
\cite[Theorem~2]{Jason-Jones}. We  also direct the reader to  its companion Corollary~\ref{inclusion-complex}  below  and a discussion on the non validity  of the reverse inclusions.  We refer to \cite{Musat-inter, PX} for the definition of $\BMO^c(\M)$.

\begin{corollary}\label{inclusion-real}
Assume that   $0<p<1$ and $0<\theta<\infty$ are such that $1/r=(1-\theta)/p <1$.
Then 
\[
\big( \H_p^c(\M), \BMO^c(\M) \big)_{\theta, r}\subset \H_r^c(\M).
\]
\end{corollary}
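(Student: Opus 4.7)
The strategy is to reduce the problem to the interpolation scale for spaces of adapted sequences established in Theorem~\ref{interpolation-adapted}, via the isometric embedding $\cal{D}: x \mapsto (dx_n)$ of $\H_q^c(\M)$ into $L_q^{\rm ad}(\M;\ell_2^c)$ for every $0 < q < \infty$. The first step is to verify, for $0 < p_0 < p_1 < \infty$ and $1/s = (1-\theta)/p_0 + \theta/p_1$, the inclusion
\[
(\H_{p_0}^c(\M), \H_{p_1}^c(\M))_{\theta, s} \subseteq \H_s^c(\M).
\]
Since $\cal{D}$ is isometric into $L_{p_i}^{\rm ad}$ for $i = 0, 1$, functoriality of the real method combined with Theorem~\ref{interpolation-adapted} yields a bounded map $\cal{D} : (\H_{p_0}^c, \H_{p_1}^c)_{\theta, s} \to L_s^{\rm ad}(\M;\ell_2^c)$. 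For a martingale $x$ in the interpolation space, $\cal{D}(x) = (dx_n)$ is already a martingale-difference sequence, and its membership in $L_s^{\rm ad}$ simply reads $\|S_c(x)\|_s < \infty$, so $x \in \H_s^c(\M)$.

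To handle the $\BMO^c$ endpoint, I would invoke the Banach-range real interpolation identity $(\H_1^c(\M), \BMO^c(\M))_{\phi, q} = \H_q^c(\M)$ for $1 < q < \infty$ and $\phi = 1 - 1/q$, the column analogue of Musat's theorem in \cite{Musat-inter}. This can be derived by adapting Musat's argument or by combining the duality $(\H_1^c)^* = \BMO^c$ with the Banach-range identity $(\H_1^c, \H_q^c)_{\psi, r} = \H_r^c$ noted after Theorem~\ref{interpolation-adapted}. By reiteration one then obtains $(\H_u^c, \BMO^c)_{\alpha, v} = \H_v^c$ for $1 \leq u < v < \infty$ and $1/v = (1-\alpha)/u$. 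Setting $B_1 = \H_p^c$, $B_2 = \H_r^c$, $B_3 = \H_q^c$ for some $q > r$, and $B_4 = \BMO^c$, the first step gives $(B_1, B_3)_{\theta_1, r} \subseteq B_2$ with $1/r = (1-\theta_1)/p + \theta_1/q$, while the preceding sentence yields $B_3 = (B_2, B_4)_{\phi_1, q}$. A one-sided form of Wolff's interpolation theorem then produces $(B_1, B_4)_{\theta, r} \subseteq B_2 = \H_r^c$, which is the claimed inclusion.

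The main obstacle is that the first step provides only an inclusion and not an equality, since for $0 < p < 1$ the embedding $\cal{D}$ does not admit a bounded retraction onto $\cal{D}(\H_p^c)$ inside $L_p^{\rm ad}$, as emphasized in the discussion following Theorem~\ref{interpolation-adapted}. Consequently, Wolff's theorem in its symmetric form cannot be applied directly, and one must carefully track the direction of inclusions through the reiteration; this is a standard but non-automatic variant of Wolff's argument. The strictness of the resulting inclusion mirrors the classical phenomenon \eqref{H-BMO} in the complex setting.
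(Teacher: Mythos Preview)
Your proof is correct and follows essentially the same strategy as the paper: embed $\H_\bullet^c(\M)$ into $L_\bullet^{\rm ad}(\M;\ell_2^c)$ via $\cal D$, invoke Theorem~\ref{interpolation-adapted} to get the one-sided inclusion, and then glue with the Banach-range identity $(\H_u^c,\BMO^c)_{\alpha,v}=\H_v^c$ by a one-sided Wolff argument.

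The only difference is the choice of the four spaces in Wolff's theorem. The paper takes $B_1=\H_p^c$, $B_2=\H_1^c$, $B_3=\H_r^c$, $B_4=\BMO^c$; the inclusion step then reads $(\H_p^c,\H_r^c)_{\eta,1}\subset\H_1^c$, and the Musat identity $(\H_1^c,\BMO^c)_{\phi,r}=\H_r^c$ is used directly, so the desired conclusion is the $B_3$-inclusion of the one-sided Wolff. You instead take $B_2=\H_r^c$, $B_3=\H_q^c$ for an auxiliary $q>r$, which means the conclusion you need is the $B_2$-inclusion $(B_1,B_4)_{\xi,r}\subset B_2$. This is equally available from the first part of Wolff's proof (his Lemma~1 yields both $(B_1,B_4)_{\xi,r}\subset B_2$ and $(B_1,B_4)_{\zeta,q}\subset B_3$ from the one-sided hypotheses), though the paper only records the $B_3$-statement because that is all it needs. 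Your route costs one extra reiteration to pass from $(\H_1^c,\BMO^c)$ to $(\H_r^c,\BMO^c)$; the paper's choice of $B_2=\H_1^c$ avoids this and is marginally more economical, but the substance is the same.
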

\begin{proof}
Let  $0<\eta<1$ such that $1= (1-\eta)/p +\eta/r$. Let $ x\in \big(\H_p^c(\M),  \H_r^c(\M) \big)_{\eta, 1}$.  Since $\H_u^c(\M) \subset L_u^{\rm ad}(\M;\ell_2^c)$ for $u\in\{p,r\}$, we have 
\begin{align*}
\|x\|_{\H_1^c}  &=\| dx\|_{L_1^{\rm ad}(\M;\ell_2^c)}\\
&\approx  \| dx \|_{(L_p^{\rm ad}(\M;\ell_2^c), L_r^{\rm ad}(\M;\ell_2^c))_{\eta, 1}}\\
&\leq \|x \|_{(\H_p^c(\M),  \H_r^c(\M))_{\eta, 1}}
\end{align*}
where the second equivalence  comes from Proposition~\ref{interpolation-adapted}.
This shows that
\[
\big( \H_p^c(\M), \H_r^c(\M) \big)_{\eta, 1}\subset \H_1^c(\M).
\]
Next,   we recall that  $\big(\H_1^c(\M), \BMO^c(\M)\big)_{\phi, r} =\H_r^c(\M)$  where  
$1/r=1-\phi$.   We remark that the  Wolff's interpolation  theorem  is valid at the level  of inclusion: if  $B_1 \cap B_4 \subset B_2 \cap  B_3$,   $(B_1, B_3)_{\eta, q_1} \subset B_2$, and $(B_2,  B_4)_{\phi, q_2} \subset B_3$, then $(B_1, B_4)_{\theta, q_3} \subset  B_3$ whenever $0<q_j\leq \infty$ ($j=1,2,3$) and  $\theta=\phi/(1-\eta +\eta\phi)$. The verification of this fact can be found in  the first part of the  proof of \cite[Theorem~1]{Wolff}.  Using $B_1=\H_p^c(\M)$,  $B_2=\H_1^c(\M)$, $B_3= \H_r^c(\M)$, and $B_4=\BMO^c(\M)$, we obtain  the desired conclusion.
\end{proof}



\subsection{The complex method}

We now turn our attention to the case of complex interpolation method which we now briefly review.

Let $S$ (respectively, $\overline{S}$) denote the open strip $\{z : 0< \Re z<1\}$ (respectively, the closed strip $\{z : 0\leq \Re z \leq 1\}$) in the complex plane $\C$. Let $A(S)$ be the  
collection of $\C$-valued functions that are analytic on $S$ and continuous and bounded  on $\overline{S}$.  For a compatible couple of complex quasi-Banach spaces $(A_0,A_1)$, we denote by $\cal{F}_0(A_0,A_1)$ the family of functions of the form $f(z)= \sum_{k=1}^n f_k(z)x_k$ with  $f_k \in A(S)$ and  $x_k \in A_0 \cap A_1$.
We equip $\cal{F}_0(A_0,A_1)$ with the quasi-norm:
\[
\big\|f\big\|_{\cal{F}_0(A_0,A_1)}= \max \Big\{ \sup_{t\in \R} \big\| f(it)\big\|_{A_0} , \sup_{t\in \R} \big\| f(1+it)\big\|_{A_1}\Big\}.
\]
Then $\cal{F}_0(A_0,A_1)$  becomes a quasi-Banach space. For $0<\theta<1$, the complex  interpolation norm on $A_0 \cap A_1$ is defined by:
\[
\big\|x \big\|_{[A_0,A_1]_\theta} =\inf\Big\{\big\|f\big\|_{\cal{F}_0(A_0,A_1)} : f(\theta)=x,\  f \in \cal{F}_0(A_0,A_1)\Big\}.
\]
The \emph{complex interpolation space} (of exponent $\theta$)  $[A_0,A_1]_\theta$ is defined as the completion of the quasi-normed space $(A_0 \cap A_1, \| \cdot \|_{[A_0,A_1]_\theta})$.

As in the real method, complex interpolations of the couple $(L_p, L_q)$ (for $0<p<q\leq \infty$)  are well-known.  Indeed, if $\N$ is a semifinite von Neumann algebra and $0<\theta<1$, then  
\[
\big[L_p(\N), L_q(\N)\big]_\theta =L_r(\N)
\]
 isometrically for $1/r=(1-\theta)/p +\theta/q$. This fact comes from \cite[Theorem~4.1]{Xu-inter} (see also  \cite{PX3}).

The next result   is the version of Theorem~\ref{main-interpolation} for the  complex method.
\begin{theorem}\label{main-complex}
 If  $0<p,q<\infty$,  $0<\theta<1$,  and  $\displaystyle{\frac{1}{r}=\frac{1-\theta}{p} + \frac{\theta}{q}}$,   then
 \[
 \h_r^c(\M) =\big[\h_p^c(\M), \h_q^c(\M) \big]_\theta
 \]
with equivalent  quasi-norms.
\end{theorem}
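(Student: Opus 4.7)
The strategy is to realize $\h_r^c(\M)$ as a compatible retract of $L_r(\N)$ across the full range $0<r<\infty$, where $\N := \M\overline{\otimes} B(\ell_2(\mathbb{N}^2))$. Once this is in place, the theorem reduces directly to the known complex interpolation identity $[L_p(\N), L_q(\N)]_\theta = L_r(\N)$ recalled in the preliminary section. Unlike the real method argument, no Wolff-type reiteration is needed, because the retraction will be bounded uniformly on all $L_r(\N)$.

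The retract is constructed from two index-independent ingredients already at hand in Subsection~2.2. First, the map $j := U\cal{D}_c$ is an isometric embedding $\h_r^c(\M) \hookrightarrow L_r(\N)$ for each $r$, given on the dense subspace $\mathfrak{F}(M)$ by the formula $x \mapsto \sum_n u_{n-1}(dx_n) \otimes e_{n,1}$, which does not depend on $r$. Second, assembling Junge's contractive projections $\cal{Q}_{n-1}$ column-by-column in the outer $e_{n,1}$-direction yields a single contractive projection $\cal{Q} : L_r(\N) \to U(L_r^{\rm cond}(\M;\ell_2^c))$ whose formula again does not depend on $r$. Setting
\[
\rho := \Pi \circ U^{-1} \circ \cal{Q} : L_r(\N) \to \h_r^c(\M),
\]
with $\Pi$ the norm-one projection from Lemma~\ref{complemented}, produces a family of contractions that agree on intersections. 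Using the martingale difference property $\E_{n-1}(dx_n)=0$ together with $\Pi\circ\cal{D}_c=\mathrm{id}$ gives $\rho\circ j = \mathrm{id}_{\h_r^c(\M)}$ for every $r$. Hence $j$ and $\rho$ define compatible bounded maps between the couples $(\h_p^c(\M),\h_q^c(\M))$ and $(L_p(\N),L_q(\N))$.

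The theorem then follows from the standard retract argument for complex interpolation. Given $y\in \h_p^c(\M)\cap \h_q^c(\M)$ and any $f=\sum_k f_k(z)x_k \in \cal{F}_0(L_p(\N),L_q(\N))$ with $f(\theta)=j(y)$, the composition $\rho\circ f=\sum_k f_k(z)\rho(x_k)$ lies in $\cal{F}_0(\h_p^c(\M),\h_q^c(\M))$ since $\rho$ is bounded $L_{p_i}(\N)\to \h_{p_i}^c(\M)$ for both $i\in\{0,1\}$, and satisfies $(\rho\circ f)(\theta)=y$. Taking the infimum over such $f$ and using the isometry of $j$ together with $[L_p(\N),L_q(\N)]_\theta=L_r(\N)$ yields
\[
\|y\|_{[\h_p^c(\M),\h_q^c(\M)]_\theta} \lesssim \|j(y)\|_{L_r(\N)} = \|y\|_{\h_r^c(\M)};
\]
the reverse estimate is obtained symmetrically by composing any $g\in\cal{F}_0(\h_p^c(\M),\h_q^c(\M))$ with $j$. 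The main technical point is to verify the uniform contractivity of the assembled projection $\cal{Q}$ on $L_r(\N)$ for all $0<r<\infty$; below the Banach range this is delicate because standard duality arguments for projections are unavailable, but it can be handled by unpacking Junge's columnwise construction of $U$ and exploiting the orthogonality of the first-column placements $\,\cdot\,\otimes e_{n,1}$ in the outer tensor factor.
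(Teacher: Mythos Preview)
Your argument hinges on the existence of a contractive projection $\cal{Q}: L_r(\N) \to U(L_r^{\rm cond}(\M;\ell_2^c))$ that is bounded uniformly for all $0<r<\infty$, and this is precisely the step that fails. Junge's projection $\cal{Q}_{n-1}$ is defined only on $L_p(\M_{n-1};\ell_2^c)$, not on $L_p(\M;\ell_2^c)$ or on a full column of $L_p(\N)$. After you right-multiply to land in the column subspace, the $n$-th block is an element of $L_r(\M\overline\otimes B(\ell_2))$ with entries in $\M$, not in $\M_{n-1}$; to bring it into the domain of $\cal{Q}_{n-1}$ you must apply $\E_{n-1}$ entrywise. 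Conditional expectations are unbounded on $L_r$ for $0<r<1$, so this step cannot be carried out in the quasi-Banach range. The ``orthogonality of the first-column placements $\,\cdot\,\otimes e_{n,1}$'' only buys you the projection onto the outer column structure; it does nothing about the inner projection onto the range of $u_{n-1}$.

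This is not a minor technicality. If a compatible retract of $\h_r^c(\M)$ inside $L_r(\N)$ existed across the full range, then both the real and complex interpolation theorems of the paper would follow in one line, and the entire algebraic atomic decomposition machinery (Theorem~\ref{main-algebraic}, Proposition~\ref{Key-lemma}, the Wolff iterations) would be superfluous. The paper's proof of the present theorem instead uses the already-established real interpolation result (Theorem~\ref{main-interpolation}) to write $\h_p^c(\M)$ and $\h_q^c(\M)$ as $(\h_{p_0}^c(\M),\bmo^c(\M))_{\theta_j,\cdot}$, and then invokes the Cwikel--Milman--Sagher comparison (Theorem~\ref{connection}) between real and complex methods to obtain the bound $\|a\|_{[\h_p^c,\h_q^c]_\theta}\lesssim \|a\|_{\h_r^c}$. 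Your ``reverse estimate'' via the isometry $j=U\cal{D}_c$ is correct and is exactly how the paper handles that direction; it is only the forward estimate that requires the deeper input.
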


For the proof, we will use the next result which provides  a connection between complex interpolation method and real interpolation method  that is valid  for quasi-Banach spaces. We should note that for Banach spaces, the inequality in the next theorem is actually an equivalence but  for quasi-Banach spaces only one inequality is valid in its full generality.
\begin{theorem}[{\cite[Theorem~3]{Cw-Mil-Sag}}]\label{connection}
Let $(A_0, A_1)$ be a compatible couple of quasi-Banach spaces. Let 
$0<\theta_j<1$, $0<\theta <1$, and $0<\g_j\leq \infty$. Denote $E_j= (A_0, A_1)_{\theta_j, \g_j}$ for $j=0,1$.  If $1/\g= (1-\theta)/\g_0 +\theta/\g_1$ and $\lambda=(1-\theta)\theta_0 +\theta\theta_1$  then  for every $a \in A_0\cap  A_1\subset E_0 \cap E_1$,
\[
\big\| a \big\|_{[E_0,E_1]_\theta} \leq C \big\|a \big\|_{(A_0, A_1)_{\lambda, \g}}.
\]
\end{theorem}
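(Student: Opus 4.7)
The plan is to construct an explicit admissible analytic function $f \colon \bar S \to A_0 + A_1$ with $f(\theta) = a$ whose boundary values are controlled in $E_0$ and $E_1$, and then invoke the definition of the complex interpolation quasi-norm. The construction will be built from a near-optimal $J$-representation of $a$, decorated with scalar analytic factors that redistribute the summability index $\g$ at the interior $\Re z = \theta$ into the indices $\g_0$ and $\g_1$ at the two boundaries.

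The first step is to exploit the equivalence of the $K$- and $J$-methods (which persists in the quasi-Banach setting) to write $a = \sum_{k \in \Z} u_k$ with $u_k \in A_0 \cap A_1$ and $b_k := 2^{-k\l} J(u_k, 2^k; A_0, A_1)$ satisfying $\|(b_k)\|_{\ell^\g} \lesssim \|a\|_{(A_0,A_1)_{\l,\g}}$. After normalizing $s_k = b_k / \|(b_k)\|_{\ell^\g}$ (so $\sum_k s_k^\g = 1$), I would set
\[
f(z) = \sum_k \phi_k(z)\, u_k, \qquad \phi_k(z) := 2^{k(z - \theta)(\theta_1 - \theta_0)}\, s_k^{(1-z)(\g/\g_0 - 1) + z(\g/\g_1 - 1)},
\]
where $s_k^w = e^{w \log s_k}$ and $\phi_k \equiv 0$ when $s_k = 0$. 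The relation $1/\g = (1-\theta)/\g_0 + \theta/\g_1$ forces the exponent of $s_k$ to vanish at $z = \theta$, hence $\phi_k(\theta) = 1$ and $f(\theta) = a$. A direct computation on the two lines gives moduli $|\phi_k(it)| = 2^{k(\theta_0 - \l)} s_k^{\g/\g_0 - 1}$ and $|\phi_k(1+it)| = 2^{k(\theta_1 - \l)} s_k^{\g/\g_1 - 1}$ (using $\l - \theta_0 = \theta(\theta_1 - \theta_0)$ and $\l - \theta_1 = -(1-\theta)(\theta_1 - \theta_0)$). Thus $\sum_k [\phi_k(j+it) u_k]$ is a $J$-representation of $f(j+it)$ at level $2^k$ whose weighted $\ell^{\g_j}$-sum equals
\[
\sum_k (2^{-k\theta_j} |\phi_k(j+it)| J_k)^{\g_j} = \|(b_k)\|_{\ell^\g}^{\g_j} \sum_k s_k^\g = \|(b_k)\|_{\ell^\g}^{\g_j},
\]
which yields $\|f(j+it)\|_{E_j} \lesssim \|a\|_{(A_0,A_1)_{\l,\g}}$ uniformly in $t$.

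The technical core, and the main obstacle, is verifying that $f$ is genuinely admissible for computing $\|\cdot\|_{[E_0, E_1]_\theta}$ in the quasi-Banach sense: one must establish analyticity on $S$ and continuity and boundedness on $\bar S$ as an $A_0 + A_1$-valued function, and, more seriously, approximate $f$ by the finite-rank family $\cal{F}_0(E_0, E_1)$ that appears in the paper's definition. The natural route is to equip $E_0$ and $E_1$ with Aoki--Rolewicz $p$-norms, to control the truncations $f_N(z) = \sum_{|k| \leq N} \phi_k(z) u_k$ on each boundary by the $\ell^p$-tail of the summability estimates above, and to pass to the limit inside the quasi-Banach completion defining $[E_0, E_1]_\theta$. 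A secondary difficulty arises in the degenerate cases $\g = \infty$ or some $\g_j = \infty$, where the $s_k$-exponents collapse; a standard fix is to replace the power of $s_k$ by a pointwise bounded selector on each boundary (peaking at an index where the supremum is essentially attained) while retaining $\phi_k(\theta) = 1$, which preserves the $E_j$-bound at the cost of a larger multiplicative constant $C$.
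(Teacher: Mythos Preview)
The paper does not contain a proof of this statement: Theorem~\ref{connection} is quoted verbatim from \cite[Theorem~3]{Cw-Mil-Sag} and used as a black box in the proof of Theorem~\ref{main-complex}. There is therefore nothing in the paper to compare your proposal against.

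That said, your outline is essentially the argument of Cwikel--Milman--Sagher. One takes a near-optimal discrete $J$-representation $a=\sum_k u_k$ with respect to the couple $(A_0,A_1)$, multiplies each $u_k$ by a scalar analytic factor $\phi_k(z)$ chosen so that $\phi_k(\theta)=1$ and so that on the boundary $\Re z=j$ the modulus $|\phi_k(j+it)|$ converts the weight $2^{-k\lambda}$ and the $\ell^\gamma$-normalization into the weight $2^{-k\theta_j}$ with $\ell^{\gamma_j}$-normalization, and then reads off $\|f(j+it)\|_{E_j}$ from the $J$-description of $(A_0,A_1)_{\theta_j,\gamma_j}$. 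Your computation of the exponents and the verification that the $\ell^{\gamma_j}$-sum collapses to $\|(b_k)\|_{\ell^\gamma}^{\gamma_j}$ is correct. The technical issues you flag (approximation by elements of $\mathcal F_0$, the Aoki--Rolewicz $p$-norm to control tails, and the $\gamma_j=\infty$ degeneracies) are exactly the points that require care in the quasi-Banach setting and are handled in \cite{Cw-Mil-Sag} along the lines you indicate.
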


\begin{proof}[Proof of Theorem~\ref{main-complex}] Assume that $0<p<q<\infty$ and fix $0<p_0<p$. Then according to Theorem~\ref{main-interpolation}, $\h_p^c(\M)=(\h_{p_0}^c(\M), \bmo^c(\M))_{\theta_0, p}$ and \ $h_q^c(\M)=(\h_{p_0}^c(\M), \bmo^c(\M))_{\theta_1, q}$ where $1/p=(1-\theta_0)/p_0$ and $1/q=(1-\theta_1)/p_0$. We may state from Theorem~\ref{connection} that  if $1/\g= (1-\theta)/p +\theta/q$ and 
$\lambda=(1-\theta)\theta_0 +\theta\theta_1$  then for every $a\in \h_{p_0}^c(\M) \cap \bmo^c(\M)$,
\[
\big\| a \big\|_{[\h_p^c(\M), \h_q^c(\M)]_\theta} \leq C \big\|a\big\|_{(\h_{p_0}^c(\M), \bmo^c(\M))_{\lambda, \gamma}}.
\] 
One can easily verify that $(1-\lambda)/{p_0}=1/\g=1/r$ and therefore we may deduce  from Theorem~\ref{main-interpolation} that
\begin{equation}\label{right}
\big\| a \big\|_{[\h_p^c(\M), \h_q^c(\M)]_\theta} \leq C' \big\|a\big\|_{\h_r^c}.
\end{equation}
On the other hand, let $U: L_s^{\rm cond}(\M;\ell_2^c) \to L_s(\cal N)$ (where $\cal{N}=\M\overline{\otimes} B(\ell_2(\mathbb{N}^2))$) be the family of  isometric embeddings  as described in the preliminary section which  are valid for all $0<s\leq \infty$. Denote by $\cal{D}_c$ the extension of the map $x\mapsto  (dx_n)_{n\geq 1}$
from $\h_s^c(\M)$ into  $L_s^{\rm cond}(\M;\ell_2^c)$. Then for  every $0<s\leq \infty$,  $U\cal{D}_c$ is an   isometric embedding of $\h_s^c(\M)$ into $L_s(\N)$. Let $b \in \h_p^c(\M) \cap \h_q^c(\M)$. Interpolating the operator $U\cal{D}_c$, we have 
\[
\big\| U\cal{D}_c(b)\big\|_{[L_p(\N), L_q(\N)]_\theta} \leq \big\| b \big\|_{[\h_p^c(\M), \h_q^c(\M)]_\theta}.
\]
Since  $\big[(L_p(\N), L_q(\N)\big]_\theta =L_r(\N)$ isometrically, it follows that 
\[
\big\| U\cal{D}_c(b)\big\|_{L_r(\N)} \leq \big\| b \big\|_{[\h_p^c(\M), \h_q^c(\M)]_\theta}.
\]
From the fact that  $U\cal{D}_c$ is an isometry on $\h_r^c(\M)$,  we deduce that
\begin{equation}\label{left}
\big\|b\big\|_{\h_r^c} \leq  \big\| b \big\|_{[\h_p^c(\M), \h_q^c(\M)]_\theta}. 
\end{equation}
From combining \eqref{right} and \eqref{left}, we obtain the desired equivalence.
\end{proof}

When $0<p<1$, we  do not know if  the corresponding  statement  to Theorem~\ref{main-complex} remains valid if  the   interpolation couple $(\h_p^c(\M), \bmo^c(\M))$  is used.   Since reiteration  theorem   is not available for complex interpolations of  quasi-Banach spaces,  in general, this consideration  is independent of Theorem~\ref{main-complex}.
We leave this as an open problem.

\medskip

The same  method of proofs can be applied to conditioned spaces and  spaces of adapted sequences to deduce the following interpolation results from Proposition~\ref{interpolation-conditioned} and Theorem~\ref{interpolation-adapted} respectively.
\begin{proposition}
If $0<\theta <1$ and  $0<p, q<\infty$ then for $1/{r}=(1-\theta)/{p} + {\theta}/q$, the following hold:
\[
\big[ L_p^{\rm cond}(\M; \ell_2^c), L_q^{\rm cond}(\M; \ell_2^c)\big]_{\theta} = L_{r}^{\rm cond}(\M;\ell_2^c)
\]
and
\[
\big[ L_p^{\rm ad}(\M; \ell_2^c), L_q^{\rm ad}(\M; \ell_2^c)\big]_{\theta} = L_{r}^{\rm  ad}(\M;\ell_2^c)
\]
with equivalent  quasi-norms.
\end{proposition}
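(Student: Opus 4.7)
The plan is to mirror the two-sided argument used for Theorem~\ref{main-complex}, substituting the conditioned (respectively adapted) scale for the $\h_p^c$-scale. I will describe the conditioned case in detail; the adapted version proceeds identically, with Theorem~\ref{interpolation-adapted} in place of Proposition~\ref{interpolation-conditioned} and the elementary isometric embedding $V:L_s^{\rm ad}(\M;\ell_2^c)\to L_s(\M\overline{\otimes}B(\ell_2))$, given by $(a_n)_{n\geq 1}\mapsto \sum_n a_n\otimes e_{n,1}$, playing the role of $U$.

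For the direction $L_r^{\rm cond}(\M;\ell_2^c)\hookrightarrow [L_p^{\rm cond},L_q^{\rm cond}]_\theta$, assume without loss of generality that $p<q$ and fix auxiliary endpoints $0<p_0<p$ and $q<p_1<\infty$. Proposition~\ref{interpolation-conditioned} expresses both $L_p^{\rm cond}$ and $L_q^{\rm cond}$ as real interpolation spaces of the single couple $(L_{p_0}^{\rm cond},L_{p_1}^{\rm cond})$, with second indices $p$ and $q$ respectively. The Cwikel--Milman--Sagher inequality (Theorem~\ref{connection}) then produces a $\lambda\in(0,1)$, namely the matching convex combination of the two real interpolation parameters, such that, with $1/\gamma=(1-\theta)/p+\theta/q=1/r$,
\[
\|a\|_{[L_p^{\rm cond},L_q^{\rm cond}]_\theta}\lesssim_{p,q}\|a\|_{(L_{p_0}^{\rm cond},L_{p_1}^{\rm cond})_{\lambda,r}}
\]
for every $a\in L_{p_0}^{\rm cond}\cap L_{p_1}^{\rm cond}$. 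A short arithmetic check shows $(1-\lambda)/p_0+\lambda/p_1=1/r$, so a second application of Proposition~\ref{interpolation-conditioned} identifies the right-hand side with $L_r^{\rm cond}(\M;\ell_2^c)$. Density of $\mathfrak F\subset L_{p_0}^{\rm cond}\cap L_{p_1}^{\rm cond}$ in $L_r^{\rm cond}$ then extends the inequality to the whole space.

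For the reverse direction I will interpolate the universal isometric embedding $U:L_s^{\rm cond}(\M;\ell_2^c)\to L_s(\N)$, with $\N=\M\overline{\otimes}B(\ell_2(\mathbb N^2))$, which the preliminary section provides for every $0<s\leq\infty$ and which is independent of $s$. Applying the complex method to $U$ yields a contraction $[L_p^{\rm cond},L_q^{\rm cond}]_\theta\to[L_p(\N),L_q(\N)]_\theta$; since the target space equals $L_r(\N)$ isometrically by the classical complex interpolation for noncommutative $L_p$-spaces (\cite{Xu-inter}), and since $U$ already acts isometrically from $L_r^{\rm cond}$ into $L_r(\N)$, one obtains
\[
\|b\|_{L_r^{\rm cond}}=\|U(b)\|_{L_r(\N)}\leq \|b\|_{[L_p^{\rm cond},L_q^{\rm cond}]_\theta}
\]
on the intersection, and this closes the equivalence.

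The main obstacle, already present in Theorem~\ref{main-complex}, is that for quasi-Banach couples Theorem~\ref{connection} furnishes only \emph{one} of the two comparisons between the real and complex interpolation quasi-norms, and the standard Banach-space devices (duality, reiteration for the complex method) are unavailable below the Banach range. This is precisely why the universal isometries $U$ and $V$ into the $L_p$-scale of a single amplified semifinite von Neumann algebra, valid uniformly for all $0<s\leq\infty$, are indispensable: they transport the missing inequality from the fully understood noncommutative $L_p$-scale back to the conditioned or adapted scale.
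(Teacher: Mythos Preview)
Your proposal is correct and follows essentially the same two-sided argument the paper uses for Theorem~\ref{main-complex}; the paper itself merely remarks that ``the same method of proofs can be applied'' using Proposition~\ref{interpolation-conditioned} and Theorem~\ref{interpolation-adapted}. The one harmless variation is that in the Hardy-space proof the ambient couple for Theorem~\ref{connection} is $(\h_{p_0}^c,\bmo^c)$, whereas you take two finite endpoints $(L_{p_0}^{\rm cond},L_{p_1}^{\rm cond})$---this is the natural adaptation, since Proposition~\ref{interpolation-conditioned} and Theorem~\ref{interpolation-adapted} are stated only for finite indices, and your arithmetic verification that $(1-\lambda)/p_0+\lambda/p_1=1/r$ is exactly what is needed.
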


As  in the case of real interpolation, we may view the second assertion in the proposition as an extension of Musat's result to the quasi-Banach space range. Moreover, 
using \cite[Lemma~1]{Wolff} (which is valid for quasi-Banach spaces), one can adapt  the argument used in the proof of  Corollary~\ref{inclusion-real} to show that the noncommutative analogue of \eqref{H-BMO} holds:
\begin{corollary}\label{inclusion-complex}
Assume that   $0<p<1$ and $0<\theta<\infty$ are such that $1/r=(1-\theta)/p <1$.
Then 
\[
\big[ \H_p^c(\M), \BMO^c(\M) \big]_{\theta}\subset \H_r^c(\M).
\]
\end{corollary}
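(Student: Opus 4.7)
The plan is to follow the scheme of the proof of Corollary~\ref{inclusion-real} with the real method replaced by the complex method throughout, relying on three ingredients: the complex interpolation of adapted sequences supplied by the preceding proposition, the complex interpolation identity $[\H_1^c(\M), \BMO^c(\M)]_\phi = \H_r^c(\M)$ for $1/r = 1-\phi$ (Musat's theorem in its column form from \cite{Musat-inter}), and the complex Wolff inclusion lemma from \cite[Lemma~1]{Wolff}, which is valid in the quasi-Banach setting.

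First I would choose $\eta \in (0,1)$ so that $1 = (1-\eta)/p + \eta/r$; the hypothesis $(1-\theta)/p < 1$ forces $r > 1$, so such an $\eta$ exists (explicitly $\eta = r(1-p)/(r-p)$). Next I would establish the intermediate complex inclusion
\[
[\H_p^c(\M),\, \H_r^c(\M)]_\eta \subset \H_1^c(\M).
\]
To do this, recall that the difference map $\cal{D}_c \colon x \mapsto (dx_n)_{n\geq 1}$ isometrically embeds $\H_u^c(\M)$ into $L_u^{\rm ad}(\M;\ell_2^c)$ for every $0 < u \leq \infty$. Interpolating this embedding and invoking the preceding proposition, which yields $[L_p^{\rm ad}(\M;\ell_2^c), L_r^{\rm ad}(\M;\ell_2^c)]_\eta = L_1^{\rm ad}(\M;\ell_2^c)$, gives that for any $x \in [\H_p^c(\M), \H_r^c(\M)]_\eta$ the sequence $(dx_n)$ lies in $L_1^{\rm ad}(\M;\ell_2^c)$. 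Since any such $x$ is already a martingale in $\H_p^c(\M) + \H_r^c(\M)$, its column square function is controlled and hence $x \in \H_1^c(\M)$ with a continuous inclusion, mirroring exactly the argument used in Corollary~\ref{inclusion-real}.

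I would then combine this intermediate inclusion with Musat's identity $[\H_1^c(\M), \BMO^c(\M)]_\phi = \H_r^c(\M)$ (with $\phi = 1 - 1/r$) through the complex Wolff inclusion lemma applied with $B_1 = \H_p^c(\M)$, $B_2 = \H_1^c(\M)$, $B_3 = \H_r^c(\M)$, $B_4 = \BMO^c(\M)$. A short computation verifies that Wolff's exponent formula $\theta = \phi/(1-\eta+\eta\phi)$ produces precisely $\theta = (r-p)/r$, which is exactly the relation $1/r = (1-\theta)/p$ in the hypothesis, so the conclusion of Wolff's lemma reads $[\H_p^c(\M), \BMO^c(\M)]_\theta \subset \H_r^c(\M)$, as desired.

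The main obstacle I anticipate is verifying the density hypothesis required by the complex Wolff inclusion lemma, namely that $B_1 \cap B_4 = \H_p^c(\M) \cap \BMO^c(\M)$ is dense (in the appropriate sense) in $B_2 \cap B_3 = \H_1^c(\M) \cap \H_r^c(\M)$, and more specifically in $B_2$ and $B_3$ individually. This should be handled by exhibiting the finite bounded martingales $\mathfrak{F}(M)$ as a common dense subspace of all four spaces, which is routine but must be checked carefully since the relevant density is with respect to four different (quasi-)norms simultaneously. A minor bookkeeping point is confirming that the intermediate interpolation space $[\H_p^c(\M), \H_r^c(\M)]_\eta$ can be realized inside $\H_p^c(\M) + \H_r^c(\M)$ in a way that legitimately identifies its elements as martingales, so that the inclusion into $\H_1^c(\M)$ is well-defined at the level of martingales rather than merely at the level of difference sequences.
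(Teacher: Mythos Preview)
Your proposal is correct and follows essentially the same approach as the paper's own argument: the paper simply states that one adapts the proof of Corollary~\ref{inclusion-real} by replacing the real method with the complex method, using the complex interpolation of adapted sequences (the preceding proposition), Musat's identity $[\H_1^c(\M), \BMO^c(\M)]_\phi = \H_r^c(\M)$, and the complex Wolff inclusion from \cite[Lemma~1]{Wolff}, which is exactly what you outline. Your additional remarks about verifying the density hypothesis via $\mathfrak{F}(M)$ and identifying elements of $[\H_p^c(\M),\H_r^c(\M)]_\eta$ as martingales are appropriate checks that the paper leaves implicit.
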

Our method of proof  only  applies under the assumption that  $r>1$.   We should point out that the reverse  inclusion does not hold even in the classical setting. An example exhibited in \cite[p.~66]{Jason-Jones}
shows that  there exists a probability space $(\Omega, \cal{F},\mathbb{P})$ and an increasing  filtration of $\sigma$-fields $(\cal{F}_n)_{n\geq 1}$ of $\cal{F}$ with $\cal{F}=\sigma(\union_{n\geq 1} \cal{F}_n)$ and  such that   if $0<p<1<r<\infty$,   then for $1/r=(1-\theta)/p$,  $\big[ \H_p(\Omega), \BMO(\Omega) \big]_{\theta} \neq \H_r(\Omega)$.

\medskip

All results stated in this section have row counterparts. However, at the time of this writing, it is unclear   if for $0<p,q<1$,  the  interpolation results for column/row conditioned  Hardy spaces  have counterparts  to the couple of diagonal Hardy spaces $(\h_p^d(\M),\h_q^d(\M))$. 

\section{Applications to martingale inequalities}


In this section, we  present various  martingale inequalities in the  general  framework of noncommutative symmetric spaces that can be derived from  methods we develop in the previous two sections. 

We  will need the following generalization of real interpolation:
\begin{definition}
  An  interpolation space $E$  for a couple  of quasi-Banach spaces $(E_0,E_1)$ is said to be \emph{given by a $K$-method} if there exists a  quasi-Banach function space $\cal{F}$ such that $x \in E$ if and only if $t\mapsto K(x,t ; E_0,E_1) \in \cal{F}$ 
 and there exists constant $C_E>0$  such that
 \[
 C_E^{-1} \big\| t\mapsto K(x,t ; E_0,E_1)\big\|_{\cal{F}} \leq \big\|x\big\|_E \leq C_E \big\| t\mapsto K(x,t ; E_0,E_1)\big\|_{\cal{F}}.
 \]
In this case, we write $E=(E_0, E_1)_{\cal{F};K}$. 
\end{definition}

 \begin{proposition}\label{K-method}
   Let  $0<p<q\leq \infty$. Every interpolation space   $E\in {\rm Int}(L_p,L_q)$ is given by a $K$-method.
  \end{proposition}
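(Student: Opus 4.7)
The plan is to exploit the Calder\'on ($K$-monotonicity) property of the couple $(L_p(\M),L_q(\M))$ to realize every interpolation space as a $K$-orbit.

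The key step is to establish that $(L_p(\M),L_q(\M))$ is a Calder\'on couple: for any $x,y \in L_p(\M)+L_q(\M)$ with $K(x,t;L_p,L_q) \le C\,K(y,t;L_p,L_q)$ for every $t>0$, there exists a linear map $T$ bounded on both $L_p(\M)$ and $L_q(\M)$ (with norms controlled by $C$) such that $Ty=x$. For the $(L_1,L_\infty)$ case this is the Dodds--Dodds--de Pagter theorem; the general range $0<p<q\le\infty$ would be reduced to the commutative Calder\'on--Mityagin--Cwikel theorem (valid throughout the quasi-Banach range) via the identification
\[
K(x,t;L_p(\M),L_q(\M)) \approx_{p,q} K(\mu(x),t;L_p(0,\infty),L_q(0,\infty)),
\]
lifting the resulting operator back to $\M$ through polar decomposition. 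As a consequence, any $E \in {\rm Int}(L_p(\M),L_q(\M))$ is itself $K$-monotone: if $y \in E$ and $K(x,t)\le C K(y,t)$ for all $t>0$, then applying the interpolation property of $E$ to the operator $T$ from above yields $x \in E$ with $\|x\|_E \lesssim C\|y\|_E$, where the hidden constant depends only on the interpolation constant of $E$.

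With $K$-monotonicity in hand, I would define
\[
\mathcal{F} := \bigl\{f \in L_0 :\ \|f\|_\mathcal{F} < \infty \bigr\}, \qquad \|f\|_\mathcal{F} := \inf\bigl\{\|y\|_E :\ y \in E,\ |f(t)| \le K(y,t;L_p,L_q) \text{ a.e.}\bigr\}.
\]
The ideal/lattice property is built into the definition. The quasi-triangle inequality inherits from $E$ via the subadditivity $K(y_1+y_2,t) \le K(y_1,t)+K(y_2,t)$ of $K$-functionals, and completeness of $\mathcal{F}$ follows from a standard Cauchy sequence argument using completeness of $E$. The identification $E = (L_p(\M),L_q(\M))_{\mathcal{F};K}$ with equivalent quasi-norms is then routine: for $x \in E$ take $y=x$ as witness to obtain $\|K(x,\cdot)\|_\mathcal{F}\le \|x\|_E$; conversely, if $K(x,\cdot) \in \mathcal{F}$, pick a near-optimal $y \in E$ with $\|y\|_E \lesssim \|K(x,\cdot)\|_\mathcal{F}$ and $K(x,t)\le K(y,t)$ a.e., then invoke $K$-monotonicity to conclude $x\in E$ with $\|x\|_E \lesssim \|K(x,\cdot)\|_\mathcal{F}$.

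The principal obstacle is the Calder\'on property in the quasi-Banach range $0<p<1$. The $(L_1,L_\infty)$ version and the Banach range $1\le p<q<\infty$ are well-documented, but the construction of a Calder\'on operator from the ordering $\mu(x)\le C\mu(y)$ must be carried out so that the resulting map remains bounded on $L_p(\M)$ when $p<1$; some standard proofs at the Banach level rely on duality or local convexity and require adaptation. Once this is secured, the rest of the argument is essentially formal.
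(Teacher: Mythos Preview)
Your route via the Calder\'on property is different from the paper's and has a real gap. First, a minor misreading: the proposition concerns the commutative couple $(L_p,L_q)$ on $(0,\infty)$, not $(L_p(\M),L_q(\M))$, so the noncommutative lifting is unnecessary; for the commutative couple Sparr's theorem already supplies the Calder\'on property for all $0<p<q\le\infty$, and the ``principal obstacle'' you flag is therefore not the actual difficulty. The genuine gap is in the quasi-triangle inequality for $\mathcal{F}$. You invoke $K(y_1+y_2,t)\le K(y_1,t)+K(y_2,t)$, but this points the wrong way: from witnesses $|f_j|\le K(y_j,\cdot)$ you get $|f_1+f_2|\le K(y_1,\cdot)+K(y_2,\cdot)$, and to bound $\|f_1+f_2\|_{\mathcal F}$ you must produce $z\in E$ with $K(z,\cdot)$ \emph{dominating} $K(y_1,\cdot)+K(y_2,\cdot)$ and with $\|z\|_E\lesssim\|y_1\|_E+\|y_2\|_E$. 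Subadditivity furnishes no such $z$; producing one is essentially the $K$-divisibility of the couple, i.e., the nontrivial core of the Brudnyi--Krugliak theory, and this is not automatic in the quasi-Banach range.

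The paper sidesteps all of this by a convexification trick: for $0<p<1$ it passes to $E^{(1/p)}\in{\rm Int}(L_1,L_{q/p})$ (citing Cadilhac), applies the known Banach-space Brudnyi--Krugliak theorem there to obtain a parameter space $\mathcal F$, and then transfers back via Holmstedt's formula $K(a,t;L_p,L_q)\approx_{p,q}\bigl[K(a^p,t^p;L_1,L_{q/p})\bigr]^{1/p}$, yielding $E=(L_p,L_q)_{\mathcal Z;K}$ with $\mathcal Z=\{f:\ t\mapsto f(t^{1/p})\in\mathcal F^{(p)}\}$. This reduces the quasi-Banach case cleanly to the Banach one without re-establishing any Calder\'on-type or $K$-divisibility statement from scratch.
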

  
    For the Banach space range, this fact is known as a result of Brudnyi and Krugliak (see \cite[Theorem~6.3]{KaltonSMS}). An  argument for the   quasi-Banach space range is given    in Dirksen's thesis (\cite{Dirksen-Thesis}). Alternatively, the quasi-Banach space range can be  deduced from the Banach space case  as follows:
assume that $0<p<1$ and $E\in {\rm Int}(L_p,L_q)$. 
Let $E^{(1/p)}$ be the $1/p$-convexification of $E$. That is,
\[
E^{(1/p)}= \Big\{h\in L_0 : |h|^{1/p} \in E\Big\}
\]
equipped with the norm $\| h\|_{E^{(1/p)}}=\| \, |h|^{1/p}\, \|_E^p$. 
 According to   \cite[Corollary~4.6]{Cadilhac1}, $E^{(1/p)} \in {\rm Int}(L_1, L_{q/p})$. Let $\cal{F}$ be a Banach  function space so that $E^{(1/p)} =(L_1, L_{q/p})_{ \cal{F}; K}$. We make the observation from  Homlsted's formula (\cite[Theorem~4.1]{Holm}) that if $a$ is a positive function in $L_p +L_q$ then:
\[
K\big(a, t; L_p, L_q\big) \approx _{p,q}\Big[  K\big( a^p, t^p; L_1, L_{q/p}\big)\Big]^{1/p}.
\]
Let $x \in E$.  We have
\begin{equation*}
\begin{split}
\big\|x\big\|_{E}^{p} &= \big\| \, |x|^{p} \,  \big\|_{E^{(1/p)}}\\
&\approx_E \big\| t\mapsto K(|x|^p, t; L_1, L_{q/p})\big\|_{\cal{F}}\\
&\approx_E \big\| t\mapsto \big[ K(|x|, t^{1/p}; L_p, L_q)\big]^p\big\|_\cal{F}\\
&\approx_E\big\| t\mapsto  K(|x|, t^{1/p}; L_p, L_q)\big\|_{\cal{F}^{(p)}}^p.
\end{split}
\end{equation*}
Let $\cal{Z}=\{ f \in L_0: t\mapsto f(t^{1/p}) \in \cal{F}^{(p)}\}$ with the quasi-norm $\|f\|_{\cal Z}=\|t \mapsto f(t^{1/p})\|_{\cal{F}^{(p)}}$.  We see now that  $E= (L_p, L_q)_{\cal{Z}; K}$.  \qed 
 
 \medskip

 Below, we will also use the $J$-method  version of the interpolation associated with function space which we now briefly describe: for $x \in E_0 +E_1$, we recall that by a representation of $x$ with respect to the couple $(E_0, E_1)$, we mean a measurable function $u: (0,\infty) \to   E_0 \cap E_1$ satisfying
 \[
 x =\int_0^\infty  u(t)\ \frac{dt}{t}
 \]
where the convergence is taken in $E_0 +E_1$. Recall that for $y \in E_0\cap E_1$ and $t>0$,
\[
J(y, t ; E_0,E_1) = \max\{\|y\|_{E_0}; t \|y\|_{E_1}\}.
\]
 For  a  given function space $\cal{F}$, we define the quasi-norm
 \[
 \|x\|_{(E_0,E_1)_{\cal{F};J}}:=\inf\big\{\|t \mapsto J(u(t), t ;E_0,E_1)\|_{\cal{F}}\big\}
 \]
where the infimum is taken over all representation $u(\cdot)$ of $x$ with respect  to the couple $(E_0, E_1)$. The  interpolation space $(E_0,E_1)_{\cal{F};J}$ is defined as the collection of all $x \in E_0 + E_1$ for which $\|x\|_{(E_0,E_1)_{\cal{F};J}}<\infty$.
We refer to \cite{KPS}  for more on this interpolation method.

\smallskip
 
 From                                                                                                                                                                                                                           the fact that every interpolation space  of the couple $(L_p, L_q)$ is given by a $K$-method, the following  can  be easily deduced from our results on $K$-functionals from the previous section:
\begin{proposition}\label{int-general}
Let $\nu$ be a positive  integer with $\nu\geq 2$. Assume that $2/(\nu +1) <p\leq 2/\nu$ and $ p<q\leq 2/(\nu-1)$. If $E \in {\rm Int}(L_p, L_q)$ with $E=(L_p,L_q)_{\cal{F};K}$ for a quasi Banach function space  $\cal{F}$
 then  the following hold:
\[\h_E^c(\M) =( \h_p^c(\M), \h_q^c(\M)\big)_{\cal{F};K},
\]
\[
E^{\rm cond}(\M;\ell_2^c) =\big( L_p^{\rm cond}(\M;\ell_2^c), L_q^{\rm cond}(\M;\ell_2^c)\big)_{\cal{F};K},
\]
and 
\[
E^{\rm ad}(\M;\ell_2^c) =\big( L_p^{\rm ad}(\M;\ell_2^c), L_q^{\rm ad}(\M;\ell_2^c)\big)_{\cal{F};K}.
\]
\end{proposition}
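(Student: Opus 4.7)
The plan is to deduce all three identities by combining the $K$-functional comparisons from the previous section (Remark~\ref{gen-Key-lemma} for Hardy and conditioned spaces, Proposition~\ref{key-lemma-ad} for adapted sequences) with the elementary observation that the $K$-method representation of a symmetric space transfers to its noncommutative counterpart. Concretely, for any semifinite $\cal N$ and any $x \in L_p(\cal N) + L_q(\cal N)$, the $K$-functional $K(x, t; L_p(\cal N), L_q(\cal N))$ depends only on the generalized singular number $\mu(x)$, so $K(x,t;L_p(\cal N), L_q(\cal N))=K(\mu(x), t; L_p, L_q)$. Combining this with the definition $\|x\|_{E(\cal N)} = \|\mu(x)\|_E$ yields, for any quasi-Banach function space $\cal F$,
\[
E(\cal N) = (L_p(\cal N), L_q(\cal N))_{\cal F; K}
\]
with equivalent quasi-norms whenever $E=(L_p,L_q)_{\cal F;K}$.

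Granted this transfer, the Hardy identity is immediate. Let $\cal N = \M \overline{\otimes} B(\ell_2(\mathbb{N}^2))$. By the definition of $\h_E^c(\M)$ adopted in Section~2, we have $\|y\|_{\h_E^c} = \|U\cal D_c(y)\|_{E(\cal N)}$ for every $y \in \h_p^c(\M) + \h_q^c(\M)$. The transfer step followed by Remark~\ref{gen-Key-lemma} gives
\[
\|y\|_{\h_E^c} \approx \bigl\|t \mapsto K(U\cal D_c(y), t; L_p(\cal N), L_q(\cal N))\bigr\|_{\cal F} \approx_{p,q} \bigl\|t \mapsto K(y, t; \h_p^c(\M), \h_q^c(\M))\bigr\|_{\cal F},
\]
which is exactly $\|y\|_{(\h_p^c(\M), \h_q^c(\M))_{\cal F;K}}$. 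The conditioned identity is proved in the same manner, using the isometric embedding $U: L_r^{\rm cond}(\M;\ell_2^c) \hookrightarrow L_r(\cal N)$ valid for each $r \in \{p,q\}$ and the more general $K$-functional equivalence in Remark~\ref{gen-Key-lemma}.

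For the adapted-sequence identity the strategy is identical. One embeds $L_r^{\rm ad}(\M;\ell_2^c)$ isometrically into $L_r(\M \overline{\otimes} B(\ell_2(\mathbb N)))$ via the column map $a \mapsto \sum_n a_n \otimes e_{n,1}$, whose modulus is $\cal S_c(a)$, and then invokes Proposition~\ref{key-lemma-ad} in place of Remark~\ref{gen-Key-lemma} together with the transfer step. The main (and essentially only) obstacle is verifying the transfer step, but it is entirely standard once one recognizes that the $K$-functional on a couple of noncommutative $L_p$-spaces is a function of the singular value function alone; the rest reduces to unwinding the definitions of the three spaces $\h_E^c(\M)$, $E^{\rm cond}(\M;\ell_2^c)$, and $E^{\rm ad}(\M;\ell_2^c)$ in terms of their defining isometric embeddings into appropriate noncommutative $L_r$-spaces.
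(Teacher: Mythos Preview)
Your proposal is correct and follows essentially the same approach as the paper, which simply asserts that the result ``can be easily deduced from our results on $K$-functionals from the previous section.'' You have spelled out what the paper leaves implicit: the transfer step $E(\cal N)=(L_p(\cal N),L_q(\cal N))_{\cal F;K}$ via singular values, followed by the $K$-functional equivalences of Remark~\ref{gen-Key-lemma} and Proposition~\ref{key-lemma-ad}.
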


The first assertion in the preceding  observation motivates the following more general question:  assume that $E_0$ and $E_1$ are symmetric quasi-Banach function spaces and $E\in {\rm Int}(E_0, E_1)$, does  it follow that $\h_E^c(\M) \in {\rm Int}\big(\h_{E_0}^c(\M), \h_{E_1}^c(\M)\big)$?

Note that if for $j\in\{0,1\}$, $E_j \in {\rm Int}(L_{p_j}, L_{q_j})$ for some  $1<p_j,q_j<\infty$, then one can deduce from Junge's representation  and interpolation that $\h_{E_j}^c(\M)$ embeds complementably  into $E_j(\M\overline{\otimes} B(\ell_2(\mathbb{N}^2)))$. Thus, in this special case, the answer to the  above question  is clearly positive. Even for the particular case where $E_0=L_p$ and $E_1=L_q$, we do not know if  the assumptions on $p$ and $q$ in Proposition~\ref{int-general} can be removed when $0<p<q\leq 1$. This of course is closely related to asking whether  the statement  about $K$-functionals in Proposition~\ref{Key-lemma}  is valid  for any $0<p<q \leq 1$.

 Next,  we make the   observation  that  a well-known result  for convex functions extends to the general setting of $p$-convex functions for $0<p<1$.
\begin{proposition}\label{Phi-interpol}
Let $0<p<q<\infty$, and  $E \in {\rm Int}(L_p, L_q)$. There exists a constant $c_E$ such that the following holds: if $f$, $g$ are functions such that  $\|g\|_{L_\Phi} \leq  \|f\|_{L_\Phi} $ for every function   $\Phi$  that  is $p$-convex and $q$-concave, and if $f \in E$,  then $g\in E$ with 
\[
\|g\|_E \leq c_E \|f\|_E.
\]
\end{proposition}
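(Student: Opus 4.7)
The plan is to combine Proposition~\ref{K-method} with the identification of the $K$-functional of the couple $(L_p,L_q)$ as (equivalent to) an Orlicz norm built from a $p$-convex, $q$-concave function. By Proposition~\ref{K-method}, there is a quasi-Banach function space $\cal{F}$ on $(0,\infty)$ such that $E=(L_p,L_q)_{\cal{F};K}$, with a two-sided estimate
\[
C_E^{-1}\,\big\|t\mapsto K(h,t;L_p,L_q)\big\|_{\cal{F}} \;\leq\; \|h\|_E \;\leq\; C_E\,\big\|t\mapsto K(h,t;L_p,L_q)\big\|_{\cal{F}}
\]
for every $h\in E$. So the whole problem reduces to a pointwise (in $t$) control of $K(g,t;L_p,L_q)$ by $K(f,t;L_p,L_q)$.

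For that pointwise control, I would invoke the Orlicz description of $L_p+tL_q$ recorded in \eqref{norm-equiv}: for every $h\in L_p+L_q$ and every $t>0$,
\[
2^{-1-1/p}\,\|h\|_{L_{\Phi_t^{(p,q)}}} \;\leq\; K(h,t;L_p,L_q) \;\leq\; 2\,\|h\|_{L_{\Phi_t^{(p,q)}}},
\]
where $\Phi_t^{(p,q)}(u)=\min\{u^p,\,t^q u^q\}$. As observed in the discussion just after \eqref{norm-equiv} (and made precise in Lemma~\ref{psi-prop2}), each $\Phi_t^{(p,q)}$ is $p$-convex and $q$-concave. Applying the hypothesis of the proposition to the particular one-parameter family $\{\Phi_t^{(p,q)}\}_{t>0}$ yields
\[
\|g\|_{L_{\Phi_t^{(p,q)}}} \;\leq\; \|f\|_{L_{\Phi_t^{(p,q)}}}, \qquad t>0,
\]
and chaining this with the two estimates above gives the uniform-in-$t$ comparison
\[
K(g,t;L_p,L_q) \;\leq\; 2^{\,2+1/p}\,K(f,t;L_p,L_q).
\]

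Finally, since $\cal{F}$ is a quasi-Banach function space, its quasi-norm is monotone under pointwise domination of nonnegative measurable functions, so feeding the last display into the $K$-method representation yields
\[
\|g\|_E \;\leq\; C_E\,\big\|t\mapsto K(g,t;L_p,L_q)\big\|_{\cal{F}} \;\leq\; 2^{\,2+1/p}\,C_E\,\big\|t\mapsto K(f,t;L_p,L_q)\big\|_{\cal{F}} \;\leq\; 2^{\,2+1/p}\,C_E^{2}\,\|f\|_E,
\]
so the conclusion holds with $c_E=2^{\,2+1/p}\,C_E^{2}$, depending only on $p$, $q$, and the constant $C_E$ associated with the $K$-method description of $E$.

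The only delicate point in this scheme is really conceptual rather than technical: one has to recognize that the hypothesis, phrased as an inequality valid for \emph{every} admissible Orlicz function, is strong enough to be specialized to the concrete family $\Phi_t^{(p,q)}$ and to deliver the uniform-in-$t$ control of $K$-functionals needed for the $K$-method to transfer the inequality to the abstract interpolation space $E$. Once Proposition~\ref{K-method} is in hand, the remainder is a direct and clean translation.
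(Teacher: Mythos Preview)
Your proof is correct and follows essentially the same route as the paper: specialize the hypothesis to the one-parameter family $\Phi_t^{(p,q)}$, use \eqref{norm-equiv} to convert this into a uniform-in-$t$ comparison of $K$-functionals for the couple $(L_p,L_q)$, and then invoke Proposition~\ref{K-method} to push the inequality through the $K$-method description of $E$. You are slightly more explicit than the paper about the constants and about the monotonicity of $\cal{F}$, but the argument is the same.
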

\begin{proof}
Let $f$ and $g$ as in the statement of the proposition.
Using  the Orlicz space description  of $L_p +tL_q$ in the previous section,   we have  from the assumption that
\[
K(g ,t ; L_p, L_q) \lesssim_{p,q} K(f,t ; L_p, L_q), \quad t>0.
\]
According to Proposition~\ref{K-method},  the interpolation space $E$ is given by a $K$-method. Fix a function space $\cal{F}$ so that for every $h \in E$, 
 \[
 \|h\|_E \approx_{E} \| t \mapsto K(h,t; L_p, L_q)\|_{\cal{F}}.
 \] 
 We may now deduce from  the  inequality on $K$-functionals  that
 \begin{align*}
 \|g\|_E &\approx_{E} \| t \mapsto K(g,t; L_p, L_q)\|_{\cal{F}}\\
 &\lesssim_E \| t \mapsto K(f,t; L_p, L_q)\|_{\cal{F}}\\
 &\approx_{E} \| f\|_E.
 \end{align*}
 This verifies the desired conclusion.
\end{proof}

Our first result in this section is a comparison  between conditioned  column  space and  column space for  a  class of symmetric spaces.

\begin{theorem}\label{reverse-cond} 
Let  $0<p<q<2$ and $F\in  {\rm Int}(L_p, L_q)$. There exists a constant $C_F$  such that   for any  $x \in F^{\rm cond}(\M;\ell_2^c)$, the following holds:
\[
\big\| x \big\|_{F(\M;\ell_2^c)}\leq C_F \big\| x \big\|_{F^{\rm cond}(\M;\ell_2^c)}.
\]
Similarly, if $\Phi$ is an Orlicz function that is $p$-convex and $q$-concave for $0<p<q<2$ then there exists a constant $C_{p,q}$ so that for any sequence $x=(x_k)_{k\geq 1}$ with $\sigma_c(x) \in L_\Phi(\M)$,
\[
\T\big[\Phi\big( \cal{S}_c(x) \big)\big] \leq C_{p,q}\T\big[\Phi\big(\sigma_c(x)\big)\big]
\]
\end{theorem}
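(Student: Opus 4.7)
The plan is to establish the Orlicz (second) assertion first and then deduce the general interpolation (first) assertion from it via Proposition~\ref{Phi-interpol}. The technical core is the algebraic atomic decomposition from Theorem~\ref{main-algebraic} combined with the factorization $L_\Phi=L_2\odot L_\Theta$.

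For the Orlicz inequality, I would reduce to a finite sequence by density and split $x$ via the direct sum \eqref{direct-sum} into its $x_1$-component and a tail $z=(0,x_2,x_3,\dots)\in\mathfrak{F}^0$. For the tail, Theorem~\ref{main-algebraic} furnishes a factorization $\bar z=\alpha\cdot\beta$ with $\alpha$ strictly lower triangular in $L_2(\M\overline{\otimes}B(\ell_2))$, $\beta$ adapted in $L_\Theta(\M;\ell_2^c)$, and $\|\alpha\|_2\|\beta\|_{L_\Theta(\M;\ell_2^c)}\lesssim_{p,q}\|\sigma_c(z)\|_{L_\Phi}$; since $|\bar z|=\cal{S}_c(z)\otimes e_{1,1}\otimes e_{1,1}$ under the identification used in the proof of Lemma~\ref{atom-inclusion}, the noncommutative Hölder inequality for $L_\Phi=L_2\odot L_\Theta$ gives $\|\cal{S}_c(z)\|_{L_\Phi}\leq\|\alpha\|_2\|\beta\|_{L_\Theta}\lesssim_{p,q}\|\sigma_c(x)\|_{L_\Phi}$. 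For the $x_1$-piece, the $q$-concavity of $\Phi$ with $q<2$ renders the scalar function $\varphi(s)=\Phi(\sqrt{s})$ concave; combining this with the majorization $\mu(\E_1|x_1|^2)\prec\mu(|x_1|^2)$ for the trace-preserving conditional expectation yields $\T[\Phi(|x_1|)]\leq\T[\Phi((\E_1|x_1|^2)^{1/2})]\leq\T[\Phi(\sigma_c(x))]$. The two pieces are then combined through the Fack--Kosaki singular-number estimate $\mu_t(\cal{S}_c(x))\leq\mu_{t/2}(|x_1|)+\mu_{t/2}(\cal{S}_c(z))$ (which follows from the identity $\cal{S}_c(x)^2=|x_1|^2+\cal{S}_c(z)^2$) together with the $q$-concavity bound $\Phi(2t)\leq 2^q\Phi(t)$; converting between the Luxemburg norm and the trace modular via the $p$-convex/$q$-concave structure finally produces $\T[\Phi(\cal{S}_c(x))]\lesssim_{p,q}\T[\Phi(\sigma_c(x))]$.

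The first assertion then follows immediately from Proposition~\ref{Phi-interpol}: reading the Orlicz inequality at the level of the singular-number functions $f=\mu(\sigma_c(x))$ and $g=\mu(\cal{S}_c(x))$ yields $\|g\|_{L_\Phi}\lesssim_{p,q}\|f\|_{L_\Phi}$ uniformly over all $p$-convex, $q$-concave Orlicz $\Phi$; absorbing the constant into $g$ in the hypothesis of that proposition upgrades this to $\|g\|_F\leq c_F\|f\|_F$ for every $F\in\mathrm{Int}(L_p,L_q)$. The principal technical obstacle is the bookkeeping in the Hölder step: one must carefully track how $\alpha$ (a lower triangular matrix over $\M\overline{\otimes}B(\ell_2)$) and $\beta$ (an adapted column in $L_\Theta(\M;\ell_2^c)$) sit inside the amplified algebra $\M\overline{\otimes}B(\ell_2(\mathbb{N}^2))$ so that the product $\alpha\cdot\beta$ genuinely recovers $\cal{S}_c(z)$ in the Orlicz norm of $\M$, paralleling the singular-value identification carried out in the proof of Lemma~\ref{atom-inclusion}; the secondary obstacle is the norm-to-modular conversion, which is only quantitatively smooth because $\Phi$ is both $p$-convex and $q$-concave.
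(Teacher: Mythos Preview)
Your overall architecture---algebraic factorization from Theorem~\ref{main-algebraic}, the product $L_\Phi=L_2\odot L_\Theta$, and then Proposition~\ref{Phi-interpol} to pass to general $F$---is exactly the paper's approach. But there is a genuine gap in how you assemble the modular inequality.

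For the tail $z$ you obtain the \emph{norm} estimate $\|\cal{S}_c(z)\|_{L_\Phi}\lesssim_{p,q}\|\sigma_c(x)\|_{L_\Phi}$, while for the $x_1$ piece you obtain the \emph{modular} estimate $\T[\Phi(|x_1|)]\leq\T[\Phi(\sigma_c(x))]$. You then propose to ``convert'' the norm estimate on $z$ into a modular one. This conversion is not available: a Luxemburg-norm inequality does \emph{not} imply the corresponding modular inequality with a constant depending only on $p,q$. Indeed, for a $p$-convex and $q$-concave $\Phi$ one has, when $\|b\|_{L_\Phi}\leq 1$, only $\|b\|_{L_\Phi}^q\leq\T[\Phi(b)]\leq\|b\|_{L_\Phi}^p$; two operators with the same Luxemburg norm $K<1$ can have modulars $K^p$ and $K^q$, so the ratio $K^{p-q}$ is unbounded. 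Thus from $\|\cal{S}_c(z)\|_{L_\Phi}\lesssim\|\sigma_c(x)\|_{L_\Phi}$ you cannot conclude $\T[\Phi(\cal{S}_c(z))]\lesssim\T[\Phi(\sigma_c(x))]$.

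The fix is simple and is what the paper does: reverse the order. Prove the Orlicz \emph{norm} inequality first (your tail argument already gives it for $z$, and your majorization argument for $x_1$, being valid for every rescaling of $\Phi$, immediately upgrades to $\|x_1\|_{L_\Phi}\leq\|(\E_1|x_1|^2)^{1/2}\|_{L_\Phi}$; then combine via the quasi-triangle inequality in $L_\Phi(\M\overline\otimes B(\ell_2))$). Proposition~\ref{Phi-interpol} then yields the first assertion. For the second assertion the paper invokes the moment version of the factorization, Remark~\ref{moment-algebraic}, which with $\lambda=1$ gives $\|\alpha\|_2^2+\T[\Theta(\cal{S}_c(\beta))]\lesssim_{p,q}\T[\Phi(\sigma_c(z))]$; one then needs the elementary modular Young inequality $\Phi(uv)\lesssim u^2+\Theta(v)$ (immediate from $\Phi^{-1}(s)\approx s^{1/2}\Theta^{-1}(s)$) together with $\mu_{2t}(\alpha\beta)\leq\mu_t(\alpha)\mu_t(\beta)$ to conclude $\T[\Phi(\cal{S}_c(z))]\lesssim\|\alpha\|_2^2+\T[\Theta(\cal{S}_c(\beta))]$. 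Your explicit handling of the $x_1$ summand via concavity of $s\mapsto\Phi(\sqrt s)$ is correct and in fact slightly more careful than the paper, which tacitly applies the factorization to all of $\mathfrak{F}$.
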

\begin{proof}
Let $\Phi$ be an Orlicz function that is $p$-convex and $q$-concave.
According to Theorem~\ref{main-algebraic}, if $x=(x_k)_{k\geq 1} $ in $\mathfrak{F}$, the column vector   $\overline{x}=\sum_{k\geq 1} x_k \otimes e_{k,1}$ admits a factorization  $\overline{x}=\a\, . \, \b$ where $\a$ is a strictly lower triangular matrix taking values in $L_2(\M)$ and $\b \in L_{\Theta}^{\rm ad}(\M;\ell_2^c)$ where $\Theta$ is the  Orlicz function satisfying $L_\Phi=L_2 \odot L_\Theta$ (as described in Theorem~\ref{main-algebraic})  and 
\[
\big\| \a\big\|_{L_2(\M \overline{\otimes} B(\ell_2(\mathbb N)))} \ . \ \big\|\b\big\|_{L_\Theta(\M;\ell_2^c)}  \leq C_{p,q} \big\| x \big\|_{L_\Phi^{\rm cond}(\M;\ell_2^c)}.
\]
From  the factorization of $L_\Phi$, it follows that 
\[
\big\| x \big\|_{L_\Phi(\M;\ell_2^c)} =
\big\| x \big\|_{L_\Phi(\M \overline{\otimes} B(\ell_2(\mathbb{N})))} \leq  \big\| \a\big\|_{L_2(\M \overline{\otimes} B(\ell_2(\mathbb N)))} \ . \ \big\|\b\big\|_{L_\Theta(\M;\ell_2^c)}.
\]
Combining the two inequalities leads to
\[
\big\| \cal{S}_c(x) \big\|_{L_\Phi(\M)} \leq  C_{p,q} 
\big\| \sigma_c(x) \big\|_{L_\Phi(\M)}. 
\]
By density, we may restate this as there is a map $J: L_\Phi^{\rm cond}(\M;\ell_2^c) \to L_\Phi(\M;\ell_2^c)$ with $J(x)=x $ for $x \in \mathfrak{F}$. For simplicity, we write $J(y)=y$ for arbitrary $y \in L_\Phi^{\rm cond}(\M;\ell_2^c)$. 

Now let $\xi$ be an element of $ F^{\rm cond}(\M; \ell_2^c)$. Then, by definition, $U(\xi) \in F(\M \overline{\otimes} B(\ell_2(\mathbb{N}^2)))$  and from boundedness of $J$ implies
\[
\big\| \mu( \xi) \big\|_{L_\Phi} \leq C_{p,q} \big\| \mu( U(\xi)) \big\|_{L_\Phi}
\]
where the generalized singular value  on the left hand side is taken with  respect  to $\M \overline{\otimes} B(\ell_2(\mathbb{N}))$ and the one the right hand side is taken with respect to $\M \overline{\otimes} B(\ell_2(\mathbb{N}^2))$. It follows from Proposition~\ref{Phi-interpol} that there exists a constant $c_F$ such that
\[
\big\| \mu( \xi) \big\|_F \leq C_{F} \big\| \mu( U(\xi)) \big\|_F.
\]
This is equivalent to 
\[
\big\| \xi\big\|_{F(\M;\ell_2^c)} \leq C_F \big\| \xi\big\|_{F^{\rm cond}(\M;\ell_2^c)}
\]
which is the desired conclusion.

\smallskip

For the $\Phi$-moment version, it suffices to repeat the above argument but using Remark~\ref{moment-algebraic} in place of Theorem~\ref{main-algebraic}.
\end{proof}

As an  immediate consequence of Theorem~\ref{reverse-cond}, we have the   following extension of the   reverse dual Doob inequality  proved in \cite[Theorem~7.1]{JX} for noncommutative $L_p$  spaces ($0<p<1$) to the general case of  noncommutative symmetric quasi-Banach  spaces.
 \begin{corollary} Let $E$ be a  symmetric quasi-Banach  function space with $E\in {\rm Int}(L_p,L_q)$ for $0<p<q<1$. There exists a constant $C_E$ so that for any sequence of positive operators $(a_k)$  in $\mathfrak{F}$, the following holds:
 \[
 \Big\| \sum_{k\geq 1} a_k \Big\|_{E(\M)} \leq C_E \Big\| \sum_{k\geq 1} \E_k(a_k)\Big\|_{E(\M)}.
 \] 
 
 Similarly,  if $\Phi$  is an Orlicz space that is $p$-convex and $q$-concave for $0<p<q<1$,  then there exists a constant $C_{p,q}$ so that for any  sequence of positive operators $(a_k)$  in $\mathfrak{F}$, the following holds:
 \[
\T\big[ \Phi\big(\sum_{k\geq 1} a_k\big) \big]  \leq C_{p,q} \T\big[\Phi\big( \sum_{k\geq 1} \E_k(a_k)\big)\big]
 \]
 \end{corollary}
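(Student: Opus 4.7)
The plan is to reduce both assertions to the conditioned-versus-square-function estimate in Theorem~\ref{reverse-cond} via a simple index-shift combined with the square-root trick. Given a finite sequence $(a_k)_{k\geq 1}\in\mathfrak{F}$ of positive operators, define a shifted sequence $c=(c_k)_{k\geq 1}$ by $c_1:=0$ and $c_k:=a_{k-1}^{1/2}$ for $k\geq 2$. Since each $a_{k-1}$ is supported on a finite projection in $\M_1$ and the positive square root preserves the corner $e\M e$, we have $c\in\mathfrak{F}^0\subset\mathfrak{F}$, and a direct calculation (using $c_1=0$ together with the convention $\E_0=\E_1$) gives
\begin{align*}
\cal{S}_c^2(c)=\sum_{k\geq 2}a_{k-1}=\sum_{k\geq 1}a_k,\qquad \sigma_c^2(c)=\sum_{k\geq 2}\E_{k-1}(a_{k-1})=\sum_{k\geq 1}\E_k(a_k).
\end{align*}

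For the $\Phi$-moment assertion, given $\Phi$ that is $p$-convex and $q$-concave with $0<p<q<1$, define $\tilde\Phi(t):=\Phi(t^2)$. A one-line check shows that $\tilde\Phi$ is $2p$-convex and $2q$-concave, and since $0<2p<2q<2$, the Orlicz part of Theorem~\ref{reverse-cond} applies to $\tilde\Phi$ and $c$, yielding
\[
\T\big[\tilde\Phi(\cal{S}_c(c))\big]\lesssim_{p,q}\T\big[\tilde\Phi(\sigma_c(c))\big].
\]
Since $\tilde\Phi(\cal{S}_c(c))=\Phi(\cal{S}_c^2(c))$ and likewise for $\sigma_c$, the identities above convert this into the desired $\Phi$-moment inequality.

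For the first assertion, the strategy is to promote the moment inequality to a Luxemburg-norm inequality uniform in $\Phi$ and then invoke Proposition~\ref{Phi-interpol}. The moment inequality is homogeneous, in the sense that scaling $(a_k)$ by any $\lambda>0$ preserves it with the same constant $C_{p,q}$; combined with the elementary bound $\Phi(t/\alpha)\leq\alpha^{-p}\Phi(t)$ for $\alpha\geq 1$ (a consequence of $p$-convexity since $0<p<1$), this lets one absorb $C_{p,q}$ inside a rescaling and deduce
\[
\big\|\sum_k a_k\big\|_{L_\Phi(\M)}\leq C_{p,q}^{1/p}\big\|\sum_k\E_k(a_k)\big\|_{L_\Phi(\M)}
\]
for every Orlicz function $\Phi$ that is $p$-convex and $q$-concave. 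Passing to generalized singular numbers via $\|x\|_{L_\Phi(\M)}=\|\mu(x)\|_{L_\Phi}$, this last inequality falls within the hypothesis of Proposition~\ref{Phi-interpol}, and that proposition then delivers the first assertion with $C_E=c_EC_{p,q}^{1/p}$.

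The genuinely delicate step is the passage from the $\Phi$-moment estimate to the uniform Luxemburg-norm estimate: it relies crucially on $0<p<1$, since the dilation bound $\Phi(t/\alpha)\leq\alpha^{-p}\Phi(t)$ for $\alpha\geq 1$ is precisely what permits the $(p,q)$-dependent constant to be swept into a rescaling before Proposition~\ref{Phi-interpol} can be brought to bear.
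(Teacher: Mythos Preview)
Your proof is correct, and the $\Phi$-moment part is essentially the paper's argument (with the index shift made explicit, which the paper glosses over by writing simply $(x_k)=(a_k^{1/2})$).

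For the norm assertion, however, you take a detour that the paper avoids. The paper observes that if $E\in{\rm Int}(L_p,L_q)$ with $0<p<q<1$, then the $2$-convexification $E^{(2)}$ lies in ${\rm Int}(L_{2p},L_{2q})$ with $0<2p<2q<2$, so the \emph{first} part of Theorem~\ref{reverse-cond} applies directly with $F=E^{(2)}$ and the sequence $(a_k^{1/2})$ (with your shift), giving $\|(\sum a_k)^{1/2}\|_{E^{(2)}}\lesssim_E\|(\sum\E_k(a_k))^{1/2}\|_{E^{(2)}}$, which is exactly the claim. You instead first establish the moment inequality, then upgrade it to a uniform Luxemburg-norm estimate and finally call Proposition~\ref{Phi-interpol}. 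This works, but it effectively re-proves a special case of the norm part of Theorem~\ref{reverse-cond} from its moment part, duplicating the passage through Proposition~\ref{Phi-interpol} that the paper already carried out inside Theorem~\ref{reverse-cond}.

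One small expository point: your closing remark that the passage ``relies crucially on $0<p<1$'' via the dilation bound $\Phi(t/\alpha)\leq\alpha^{-p}\Phi(t)$ misidentifies where the constraint enters. That dilation bound holds for any $p$-convex $\Phi$ regardless of whether $p<1$; the genuine reason $0<p<q<1$ is needed is so that $\tilde\Phi(t)=\Phi(t^2)$ is $2p$-convex and $2q$-concave with $2q<2$, placing it in the range where Theorem~\ref{reverse-cond} applies.
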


\begin{proof}
Let $E\in {\rm Int}(L_p, L_q)$ with $0<p<q<1$.
Let $(a_k)_{k\geq 1}$ be a sequence of positive operators in $\mathfrak{F}$.   If $E^{(2)}$ is the $2$-convexification of $E$, then $E^{(2)}\in {\rm Int}(L_{2p}, L_{2q})$.   The conclusion follows immediately from  the first inequality in Theorem~\ref{reverse-cond} using $F=E^{(2)}$ and $(x_k)=(a_k^{1/2})$. Similar argument applies to the $\Phi$-moment case using  
the second inequality in Theorem~\ref{reverse-cond} to the Orlicz function $t\mapsto \Phi(t^2)$.
\end{proof}


The next result extends   \cite[Theorem~4.11]{Jiao-Ran-Wu-Zhou} to the case of  noncommutative   martingale Hardy spaces associated with  symmetric function spaces and moment inequalities.
\begin{theorem} Let $0<p<q<2$.
If  $F\in {\rm Int}(L_p, L_q)$  then there exists a constant $C_F$ such that for every $x \in \h_F^c(\M)$, the following two inequalities hold:
\[
\big\|x\big\|_{\H_F^c(\M)} \leq C_F \big\|x\big\|_{\h_F^c(\M)}
\]
and
\[
\big\|x\big\|_{F(\M)} \leq C_F \big\|x\big\|_{\h_F^c(\M)}.
\]
Similarly, if $\Phi$ is $p$-convex and $q$-concave for $0<p<q<2$ then  there exists a constant $C_{p,q}$ so that for every $x\in \h_\Phi^c(\M)$, we have
\[
\max\Big\{ \T\big[ \Phi\big(S_c(x)\big)\big] ;  \T\big[ \Phi\big(|x|\big)\big]  \Big\} \leq C_{p,q} 
\T\big[ \Phi\big(s_c(x)\big)\big].
\]
\end{theorem}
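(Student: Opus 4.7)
The plan is to prove each of the two inequalities first at the $\Phi$-moment level for a general $p$-convex, $q$-concave Orlicz function $\Phi$, and then transfer to an arbitrary $F\in\mathrm{Int}(L_p,L_q)$ via Proposition~\ref{Phi-interpol}. The first pair, $\|x\|_{\H_F^c}\leq C_F\|x\|_{\h_F^c}$ and $\T[\Phi(S_c(x))]\lesssim_{p,q}\T[\Phi(s_c(x))]$, is essentially immediate from Theorem~\ref{reverse-cond}: the map $\mathcal{D}_c$ isometrically embeds $\h_F^c(\M)$ into $F^{\rm cond}(\M;\ell_2^c)$ via $x\mapsto(dx_n)_n$, and since $\cal{S}_c((dx_n))=S_c(x)$ and $\sigma_c((dx_n))=s_c(x)$, both statements follow by applying Theorem~\ref{reverse-cond} directly to the sequence $(dx_n)$.

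For the second pair, I would first establish the $\Phi$-moment version $\T[\Phi(|x|)]\lesssim_{p,q}\T[\Phi(s_c(x))]$. By Corollary~\ref{decomp-marting} together with the Hardy-space analogue of the direct sum \eqref{direct-sum}, decompose $x=x_1+y$ with $x_1\in L_\Phi(\M_1)$ and $y$ a scalar multiple of an algebraic $\h_\Phi^c$-atom. Since $x_1\in\M_1$ gives $s_c(x_1)=|x_1|$ and (using $y_1=0$) $s_c^2(x)=s_c^2(x_1)+s_c^2(y)$, we obtain $\T[\Phi(|x_1|)]\leq\T[\Phi(s_c(x))]$ at once. For the atomic piece, apply the moment factorization of Remark~\ref{moment-algebraic} to $(dy_n)\in\mathfrak{F}^0$ to produce $\overline{dy}=\alpha\,.\,\beta$ with
\[
\|\alpha\|_2^2+\T\bigl[\Theta\bigl((\textstyle\sum_k|\beta_k|^2)^{1/2}\bigr)\bigr]\lesssim_{p,q}\T[\Phi(s_c(y))].
\]
Tracing $y$ through the projection $\Pi$ of Lemma~\ref{complemented} as in the proof of Corollary~\ref{decomp-marting} produces $y=\sum_k a_k\beta_k$ with $a_k=\sum_{n>k}d_n(\alpha_{n,k})$; orthogonality of martingale differences then gives $\sum_k\|a_k\|_2^2\leq\|\alpha\|_2^2$. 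Viewing $y$ as the $(1,1)$-entry of the product $AB$ with $A=(a_k)_k$ a row in $L_2(\M\overline{\otimes}B(\ell_2))$ and $B=(\beta_k)_k$ an adapted column in $L_\Theta(\M;\ell_2^c)$, a modular H\"older--Young inequality of the form $\T[\Phi(|AB|)]\lesssim_{p,q}\|A\|_2^2+\T[\Theta((\sum_k|\beta_k|^2)^{1/2})]$ (the same estimate used implicitly in the $\Phi$-moment portion of the proof of Theorem~\ref{reverse-cond}) yields $\T[\Phi(|y|)]\lesssim_{p,q}\T[\Phi(s_c(y))]$. Combining with the bound on $x_1$ via the singular value submajorization $\mu_t(x)\leq\mu_{t/2}(x_1)+\mu_{t/2}(y)$ and the $q$-concavity estimate $\Phi(a+b)\leq 2^q(\Phi(a)+\Phi(b))$ completes the moment version. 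The transfer to a general $F$ is then carried out by first upgrading the $\Phi$-moment inequality to the Luxemburg-norm inequality $\|x\|_{L_\Phi}\lesssim_{p,q}\|s_c(x)\|_{L_\Phi}$ (using the $p$-convexity scaling $\Phi(u/\lambda)\leq\lambda^{-p}\Phi(u)$ for $\lambda\geq 1$), and then applying Proposition~\ref{Phi-interpol} with $f=\mu(s_c(x))$ and $g=\mu(x)$.

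The hardest step will be the modular H\"older--Young inequality $\T[\Phi(|AB|)]\lesssim_{p,q}\|A\|_2^2+\T[\Theta(|B|)]$ in the quasi-Banach range $p\leq 1$, where neither $\Phi$ nor $\Theta$ is convex. For the pure power case $\Phi(t)=t^p$ with $1/p=1/2+1/\theta$, the Araki--Lieb--Thirring inequality gives $\T[|AB|^p]\leq\T[|A|^p|B^*|^p]$, after which noncommutative H\"older with conjugate exponents $2/p$ and $2/(2-p)$ yields $\T[|AB|^p]\leq\|A\|_2^p\|B\|_\theta^p$, and the scalar Young inequality with the same pair of conjugate exponents gives
\[
\|A\|_2^p\|B\|_\theta^p\leq\tfrac{p}{2}\|A\|_2^2+\tfrac{2-p}{2}\T[|B|^\theta].
\]
The passage to a general $p$-convex, $q$-concave $\Phi$ then proceeds by integrating this power-type estimate against the representing measure $\mu$ of \eqref{measure}, mirroring the $\Phi$-moment half of the proof of Theorem~\ref{reverse-cond}.
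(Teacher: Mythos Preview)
Your treatment of the first inequality is correct and matches the paper exactly: both simply apply Theorem~\ref{reverse-cond} to the difference sequence $(dx_n)$ and then transfer to general $F$ via Proposition~\ref{Phi-interpol}.

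For the second inequality, your route is genuinely different and more circuitous. The paper does \emph{not} pass through the atomic decomposition of Corollary~\ref{decomp-marting}. Instead it writes the operator $x$ itself (not its difference sequence) directly as a row-column product $x=\sum_{l\geq 1} a_l b_l$, where
\[
a_l=\sum_{n\geq l} dx_n\,\Psi(\lambda s_n)\bigl(\Psi(\lambda s_l)^{-1}-\Psi(\lambda s_{l-1})^{-1}\bigr)^{1/2},\qquad
b_l=\bigl(\Psi(\lambda s_l)^{-1}-\Psi(\lambda s_{l-1})^{-1}\bigr)^{1/2},
\]
and then applies the \emph{multiplicative} norm factorization $L_\Phi=L_2\odot L_\Theta$ to obtain
\[
\|x\|_{L_\Phi}\leq \Big\|\Big(\sum_l a_la_l^*\Big)^{1/2}\Big\|_2\cdot\Big\|\Big(\sum_l b_l^*b_l\Big)^{1/2}\Big\|_{L_\Theta}.
\]
The right-hand factors are then controlled by $\|s_c(x)\|_{L_\Phi}$ exactly as in the proof of Theorem~\ref{main-algebraic}, with the same choice of $\lambda$. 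This completely bypasses the additive ``modular H\"older--Young'' inequality that you isolate as the hardest step; it is not needed for the norm version at all, and hence not for the transfer to general $F$.

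Your identification of the modular inequality $\tau[\Phi(|AB|)]\lesssim_{p,q}\|A\|_2^2+\tau[\Theta(|B|)]$ as the crux of the \emph{moment} version is accurate (the paper's ``using $\lambda=1$'' is terse on exactly this point). However, your proposed proof of it has a gap: integrating the pure-power estimate against the representing measure $\mu$ of \eqref{measure} does not work directly, because the pieces $\min\{(ts)^p,(ts)^q\}$ are not pure powers, and the integral representation of $\Theta$ is not obtained from that of $\Phi$ in any simple termwise fashion. A cleaner way to get the modular inequality is to first prove the \emph{scalar} bound $\Phi(st)\lesssim_{p,q} s^2+\Theta(t)$ directly from the relation $\Phi^{-1}(u)\approx_{p,q} u^{1/2}\Theta^{-1}(u)$ (split into the cases $s^2\geq \Phi(st)$ and $s^2<\Phi(st)$, using the $\rho$-concavity of $\Theta$ in the second case), and then combine it with the singular-value inequality $\mu_{2t}(AB)\leq \mu_t(A)\mu_t(B)$ to obtain
\[
\tau[\Phi(|AB|)]=\int_0^\infty \Phi(\mu_t(AB))\,dt
\lesssim\int_0^\infty\bigl(\mu_t(A)^2+\Theta(\mu_t(B))\bigr)\,dt
=\|A\|_2^2+\tau[\Theta(|B|)].
\]
With this in hand your argument for the moment version goes through, but for the symmetric-space inequality $\|x\|_{F(\M)}\lesssim\|x\|_{\h_F^c}$ the paper's direct factorization is simpler and avoids the detour entirely.
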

\begin{proof}  $\bullet$ For the first inequality, let $y$ be a  martingale in $\mathfrak{F}(M)$.  Repeating the argument in the proof of Theorem~\ref{reverse-cond} with the martingale difference  sequence $(dy_k)$, we have  for every $p$-convex and $q$-concave Orlicz function $\Phi$:
\[
\big\| y\big\|_{\H_\Phi^c(\M)} \leq C_{p,q} \big\|y\big\|_{\h_\Phi^c(\M)}.
\]
By density, the above inequality shows that there exists a bounded linear map $I: \h_\Phi^c(\M) \to \H_\Phi^c(\M)$
with $I(y)=y$ for every  $y \in \mathfrak{F}(M)$. For simplicity, for a given $x \in \h_\Phi^c(\M)$, we will denote  $I(x)$  by $x$. That is,  for $x\in \h_\Phi^c(\M)$ we may state:
\[
\big\| \mu(S_c(x)) \big\|_{L_\Phi} \leq C_{p,q} \big\| \mu(| U{\cal D}_c(x)|)\big\|_{L_\Phi}
\]
where the generalized singular numbers are computed in the appropriate von Neumann algebras.
By Proposition~\ref{Phi-interpol}, there exists a constant $C_F$ so that
whenever  $ x \in \h_F^c(\M)$, we have  $\mu(| U{\cal D}_c(x)|)\in F$ and  
\[
\big\| \mu(S_c(x)) \big\|_{F} \leq C_F \big\| \mu(| U{\cal D}_c(x)|)\big\|_{F} .
\]
This is equivalent to the first inequality:
\[
\big\| x\big\|_{\H_F^c(\M)} \leq C_F \big\|x \big\|_{\h_F^c(\M)}.
\]

$\bullet$ In view of the  argument above, it suffices to verify the second  inequality  for the case of Orlicz function spaces. For this special case,  our proof below is modeled after the argument used  in  \cite[Corollary~3.14]{Chen-Ran-Xu} for the case of $L_p$-spaces. Let  $\Phi$   be a $p$-convex and $q$-concave Orlicz function and $x$ be a  martingale in $\mathfrak{F}(M)$. 
By approximation, we assume that  for every $n\geq 1$, $s_{c,n}(x)$  is invertible with bounded inverse. As above, we denote $s_{c,n}(x)$ by $s_n$ and we take $s_0=0$. 

Let $\l>0$. We write $x=\sum_{l\geq 1} a_lb_l$ where for every $l\geq 1$, we set:
 \[
 a_l=\sum_{n\geq l} dx_n \Psi(\l s_n)(\Psi(\l s_l)^{-1} -\Psi(\l 
 s_{l-1})^{-1})^{1/2}
 \ \ 
  \text{and}\ \  
  b_l=(\Psi(\l s_l)^{-1} -\Psi(\l s_{l-1})^{-1})^{1/2}. \]
   Using the factorization $L_\Phi=L_2 \odot L_\Theta$, we may deduce that:
\begin{align*}
\big\|x\big\|_{L_\Phi(\M)} &= \Big\| \sum_{l\geq 1} a_l b_l\Big\|_{L_\Phi(\M)}\\
&\leq \Big\| \Big( \sum_{l\geq 1} a_l a_l^*\Big)^{1/2} \Big\|_2 . \Big\|\Big( \sum_{l\geq 1} b_l^* b_l\Big)^{1/2}\Big\|_{L_\Theta(\M)}\\
&=\Big( \sum_{l\geq 1}\big\|a_l\big\|_2^2\Big)^{1/2} \big\| \Psi(\l s)^{-1/2}\big\|_{L_\Theta(\M)}.
\end{align*}
Following the same reasoning as  in the proof of Theorem~\ref{main-algebraic}, there exists a constant $C_{p,q}$ and a corresponding choice of $\l$ so that
\[
\Big( \sum_{l\geq 1}\big\|a_l\big\|_2^2\Big)^{1/2} \leq  C_{p,q} \big\|s_c(x)\big\|_{L_\Phi(\M)}
\]
and 
\[
\big\| \Psi(\l s)^{-1/2}\big\|_{L_\Theta(\M)} \leq 1.
\]
This  yields  that the inequality in the statement is verified for  Orlicz function spaces. More precisely,
\begin{equation}\label{O}
\big\|x\big\|_{L_\Phi(\M)}  \leq C_{p,q} \big\|s_c(x)\big\|_{L_\Phi(\M)}.
\end{equation}
The case of moment inequalities follows directly from the moment part of Theorem~\ref{reverse-cond}  and from using $\lambda=1$ in the proof above. 
\end{proof}


\medskip

We  now proceed with  further application of Proposition~\ref{factorization-adapted}. Below, we show that  noncommutative  Davis decompositions can be easily deduced from  factorizations of adapted sequences. We refer to  \cite{Junge-Perrin}, \cite{Perrin},  and \cite{Ran-Wu-Xu}  for  various forms of  noncommutative   Davis decompositions.  We refer to \cite{Ran-Wu-Zhou} for formal definition of  the noncommutative vector-valued space $L_\Phi(\M;\ell_1^c)$ used below.

\begin{theorem}\label{Davis-Orlicz}  Assume that $0<p<q<2$ and $\Phi$ is an Orlicz  function that is $p$-convex and $q$-concave.  Given an adapted sequence $\xi=(\xi_n)_{n\geq 1}$  in $L_\Phi(\M;\ell_2^c)$, there exist two adapted sequences $y=(y_n)_{n\geq 1}$ and $z=(z_n)_{n\geq 1}$  such that:
\begin{enumerate}[{\rm(i)}]
\item $\xi=y +z$;
 \item $y \in L_\Phi(\M;\ell_1^c)$ with $\big\| y \big\|_{L_\Phi(\M;\ell_1^c)} \lesssim_{p,q} \big\| \xi \|_{L_\Phi(\M;\ell_2^c)}$;
\item $z=\lambda a$ where $a$ is an algebraic $L_\Phi^{c,\rm cond}$-atom and 
$\lambda\lesssim_{p,q} \big\| \xi \|_{L_\Phi(\M;\ell_2^c)}$.
\end{enumerate}
In particular,
\[
\big\| y \big\|_{L_\Phi(\M;\ell_1^c)}  + \big\| z \big\|_{L_\Phi^{c,\rm cond}(\M;\ell_2^c)} \lesssim_{p,q} \big\| \xi \|_{L_\Phi(\M;\ell_2^c)}.
\]
\end{theorem}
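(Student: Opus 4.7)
The plan is to apply Proposition~\ref{factorization-adapted} directly to $\xi$, obtaining a factorization $\xi = \alpha \cdot \beta$ where $\alpha = (\alpha_{n,j})_{1\leq j\leq n}$ is a lower triangular matrix in $L_2(\M\overline{\otimes}B(\ell_2(\mathbb N)))$ with $\alpha_{n,j} \in L_2(\M_n)$ for every $1\leq j\leq n$, and $\beta = (\beta_{n,1})_{n\geq 1}$ is an adapted column matrix in $L_\Theta(\M;\ell_2^c)$, satisfying
\[
\big\|\alpha\big\|_2 \cdot \big\|\beta\big\|_{L_\Theta(\M;\ell_2^c)} \lesssim_{p,q} \big\|\xi\big\|_{L_\Phi(\M;\ell_2^c)}.
\]
The core idea is then to split $\alpha$ into its diagonal and strictly lower triangular parts, $\alpha = \alpha_d + \alpha_s$ with $\alpha_d = \sum_n \alpha_{n,n} \otimes e_{n,n}$ and $\alpha_s = \sum_{j<n} \alpha_{n,j} \otimes e_{n,j}$. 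The Pythagorean identity $\|\alpha\|_2^2 = \|\alpha_d\|_2^2 + \|\alpha_s\|_2^2$ ensures that each piece inherits the $L_2$-bound of $\alpha$.

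Set $y := \alpha_d \cdot \beta$ and $z := \alpha_s \cdot \beta$, so that $\xi = y + z$. Both sequences are adapted: $y_n = \alpha_{n,n}\beta_n$ with $\alpha_{n,n} \in L_2(\M_n)$ and $\beta_n \in L_\Theta(\M_n)$ (hence $y_n \in L_\Phi(\M_n)$ by the factorization $L_\Phi = L_2 \odot L_\Theta$), while $z_n = \sum_{j<n}\alpha_{n,j}\beta_j$ is affiliated with $\M_n$ because each factor $\alpha_{n,j}$ lies in $L_2(\M_n)$ and $\beta_j \in L_\Theta(\M_j) \subseteq L_\Theta(\M_n)$. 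Note also that $z_1 = 0$, as required for an atom.

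For the $z$-term, since $\alpha_s$ is strictly lower triangular with $\alpha_s \in L_2(\M\overline{\otimes}B(\ell_2(\mathbb N)))$ and $\beta$ is adapted in $L_\Theta(\M;\ell_2^c)$, setting $\lambda := \|\alpha_s\|_2 \cdot \|\beta\|_{L_\Theta(\M;\ell_2^c)}$ renders $a := z/\lambda$ an algebraic $L_\Phi^{c,\rm cond}$-atom by definition. Lemma~\ref{atom-inclusion} then gives $\|z\|_{L_\Phi^{\rm cond}(\M;\ell_2^c)} \leq \lambda$, and by the estimate from Proposition~\ref{factorization-adapted} we have $\lambda \lesssim_{p,q} \|\xi\|_{L_\Phi(\M;\ell_2^c)}$.

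For the $y$-term, the point is to exploit that $L_\Phi(\M;\ell_1^c)$ (as introduced in \cite{Ran-Wu-Zhou}) is built precisely from the factorization $L_\Phi = L_2 \odot L_\Theta$: an adapted sequence $(y_n)$ lies in $L_\Phi(\M;\ell_1^c)$ with norm dominated by $\|(u_n)\|_{L_2(\M;\ell_2^c)} \cdot \|(v_n)\|_{L_\Theta(\M;\ell_2^c)}$ whenever it admits an adapted diagonal splitting $y_n = u_n v_n$. Applied with $u_n = \alpha_{n,n}$ and $v_n = \beta_n$, this yields $\|y\|_{L_\Phi(\M;\ell_1^c)} \leq \|\alpha_d\|_2 \cdot \|\beta\|_{L_\Theta(\M;\ell_2^c)} \lesssim_{p,q} \|\xi\|_{L_\Phi(\M;\ell_2^c)}$. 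The main obstacle in the whole argument is precisely this last step: one must carefully unpack the definition of $L_\Phi(\M;\ell_1^c)$ and verify that the pair $(\alpha_d,\beta)$ realizes $y$ in the abstract sense with constants depending only on $p,q$ (through the comparability constants in $L_\Phi = L_2 \odot L_\Theta$). The final combined inequality then follows by summing the two bounds.
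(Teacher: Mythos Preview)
Your proposal is correct and follows essentially the same approach as the paper: apply Proposition~\ref{factorization-adapted} to obtain $\xi=\alpha\cdot\beta$, split $\alpha$ into its diagonal and strictly lower triangular parts, and set $y$ and $z$ accordingly. The paper's proof is in fact terser than yours, declaring the norm estimates ``straightforward'' where you spell out the Pythagorean identity and the $L_\Phi(\M;\ell_1^c)$ verification.
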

\begin{proof}
Let $\Theta$ be an Orlicz  function such that $L_\Phi=L_2 \odot L_\Theta$. Consider the factorization $\xi= \a \,  .\, \b$ where $\a$ is a lower triangular matrix in $L_2(\M \overline{\otimes} B(\ell_2(\mathbb N)))$ and $\b \in L_\Theta^{\rm ad}(\M;\ell_2^c)$
according  to Proposition~\ref{factorization-adapted}. Define the strictly lower triangular matrix $\a^-$  by setting $\a^-_{n,j}=\a_{n,j}$ for $1\leq j <n$ and  the diagonal matrix $d= \sum_{n\geq 1} \a_{n,n} \otimes e_{n,n}$. Clearly,
$\a= \a^- +d$.  Set:
\[
y= d\, . \, \b \quad\text{and}\quad z= \a^-\, . \, \b.
\]
Then $\xi=y+z$ and from Proposition~\ref{factorization-adapted}(ii), $y$ and $z$ are adapted. Since $\a^-$ is strictly lower triangular and $\b$ is adapted, $z$ is a scalar multiple of an algebraic $L_\Phi^{c, \rm cond}$-atom. The verification of the  norm estimates are straightforward.
\end{proof}

We now turn our attention to Davis type inequalities   involving   other classes of symmetric spaces of measurable operators using  interpolation results from the previous section.  The next result deals with the case of Lorentz spaces.
\begin{proposition}\label{simultaneous} Let   $\xi=(\xi_n)_{n\geq 1}$ be an adapted sequence that belongs to  $L_{2/3}(\M;\ell_2^c)\cap L_2(\M;\ell_2^c)$. Then there exist   two  adapted sequences $y=(y_n)_{n\geq 1}$ and $z=(z_n)_{n\geq 1}$  in $L_{2/3}(\M)\cap L_2( \M)$ such that:
\begin{enumerate}[{\rm(i)}]
\item $\xi=y +z$;
\item  for every $2/3<p<2$ and  $0<q\leq \infty$,  the following holds:
\[
\big\|y\big\|_{L_{p,q}(\M \overline{\otimes}\ell_\infty)} + \big\| z\big\|_{L_{p,q}^{\rm cond}(\M;\ell_2^c)} \lesssim_{p,q}\big\|\xi\big\|_{L_{p,q}(\M;\ell_2^c)}.
\]
\end{enumerate}
\end{proposition}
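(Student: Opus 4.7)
The plan is to exhibit a single decomposition $\xi = y + z$ via one application of Proposition~\ref{factorization-adapted} followed by the diagonal/strictly-lower-triangular splitting used in the proof of Theorem~\ref{Davis-Orlicz}, and then to transfer the resulting endpoint estimates to the full Lorentz scale through the $K$-method description of Proposition~\ref{int-general} on the interval $I_2 = (2/3,2)$.

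Concretely, I would fix $q_0 \in (2/3, 2)$ close to $2$ and apply Proposition~\ref{factorization-adapted} to $\xi$ with an Orlicz function $\Phi$ that is $2/3$-convex and $q_0$-concave, taking $\lambda = 1$ in the construction (as in Remark~\ref{moment-algebraic}). This yields a factorization $\xi = \alpha \cdot \beta$ in which $\alpha$ is a lower triangular matrix with $\alpha_{n,j} \in L_2(\M_n)$ for $1\le j\le n$ and $\beta \in L_\Theta^{\rm ad}(\M;\ell_2^c)$, both factors depending on $\xi$ only through the partial conditional square functions $\varsigma_n = \cal{S}_{c,n}(\xi)$. I would then write $\alpha = d + \alpha^-$, where $d = \sum_n \alpha_{n,n}\otimes e_{n,n}$ is the diagonal part and $\alpha^-$ the strictly lower triangular part, and set $y := d \cdot \beta$ (so $y_n = \alpha_{n,n}\beta_n$) and $z := \alpha^- \cdot \beta$. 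Both $y$ and $z$ are adapted by Proposition~\ref{factorization-adapted}(ii), and $\xi = y + z$.

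Next, I would verify endpoint estimates at $r = 2/3$ and $r = q_0$ for this same decomposition. For the strictly lower triangular factor, $z$ is a scalar multiple of an algebraic $L_{\Phi_r}^{c,\rm cond}$-atom for every admissible $\Phi_r$, because the structural conditions (strict lower triangularity of $\alpha^-$, adaptedness of $\beta$) in the atom definition are independent of the Orlicz function; applying Lemma~\ref{atom-inclusion} together with the factorization $L_r = L_2 \odot L_{\Theta_r}$ yields $\|z\|_{L_r^{\rm cond}(\M;\ell_2^c)} \lesssim_r \|\xi\|_{L_r^{\rm ad}(\M;\ell_2^c)}$ for $r\in\{2/3,q_0\}$. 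For the diagonal factor, $y_n = \alpha_{n,n}\beta_n$ lies in $\M_n$, and one controls $\|y\|_{L_r(\M\overline{\otimes}\ell_\infty)}$ by H\"older's inequality in the direct-sum algebra, combined with the operator monotonicity and multi-scale properties of $\Psi$ from Lemma~\ref{psi-prop}. Since $L_{p,q}^{\rm ad}(\M;\ell_2^c)$ for $2/3 < p < q_0$ is obtained by $K$-method interpolation from the couple $(L_{2/3}^{\rm ad}(\M;\ell_2^c), L_{q_0}^{\rm ad}(\M;\ell_2^c))$, and similarly for the target spaces $L_{p,q}(\M\overline{\otimes}\ell_\infty)$ and $L_{p,q}^{\rm cond}(\M;\ell_2^c)$ (Proposition~\ref{int-general}), the endpoint estimates on the \emph{single} pair $(y,z)$ propagate pointwise in the $K$-functional and deliver the claimed simultaneous Lorentz bound; letting $q_0 \to 2$ covers the full range.

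The principal difficulty is that the map $\xi \mapsto (y,z)$ is nonlinear, so standard operator interpolation does not apply directly. The remedy is precisely the point of fixing $\lambda = 1$: the factorization is computed from $(\varsigma_n)$ alone in a way that is \emph{independent} of the target Orlicz function, which is what makes it possible for one decomposition to realize endpoint estimates at both $r=2/3$ and $r=q_0$ simultaneously. Establishing this universality — in particular, the bound $\|y\|_{L_r(\M\overline{\otimes}\ell_\infty)} \lesssim_r \|\xi\|_{L_r^{\rm ad}(\M;\ell_2^c)}$ at both endpoints from the same $d$ and $\beta$, so that the $K$-method can then glue the two endpoint estimates into an estimate valid across the entire interval $(2/3,2)$ — is where the hard technical work lies.
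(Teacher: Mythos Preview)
Your plan has a genuine gap at the interpolation step, and the paper's proof proceeds by an entirely different mechanism.

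First, the claim that fixing $\lambda=1$ makes the factorization ``independent of the target Orlicz function'' is not correct. In the construction of Proposition~\ref{factorization-adapted}, the matrices $\alpha$ and $\beta$ are built from $\Psi(\varsigma_n)$, and $\Psi$ is defined from the representing measure of the particular Orlicz function $\Phi$ you chose. Changing $\Phi$ changes $\Psi$ and hence changes $\alpha,\beta$. So your single $(y,z)$ comes with norm control of $\beta$ only in one $L_\Theta$, and there is no mechanism for deducing $\|\beta\|_{L_{\Theta_r}}\lesssim_r\|\xi\|_{L_r^{\rm ad}}$ at a second endpoint $r$.

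Second, and more seriously, even if you did have simultaneous endpoint bounds
\[
\|y\|_{L_{2/3}(\M\overline{\otimes}\ell_\infty)}\lesssim\|\xi\|_{L_{2/3}^{\rm ad}},\qquad
\|y\|_{L_{q_0}(\M\overline{\otimes}\ell_\infty)}\lesssim\|\xi\|_{L_{q_0}^{\rm ad}},
\]
this does \emph{not} give $K(y,t)\lesssim K(\xi,t)$. The $K$-functional of $\xi$ is computed via an optimal splitting $\xi=\xi_0+\xi_1$; knowing only that $\|y\|$ is controlled by $\|\xi\|$ at each endpoint says nothing about how $y$ interacts with that splitting, because your map $\xi\mapsto y$ is nonlinear. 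Proposition~\ref{int-general} identifies the interpolation spaces, but it does not make nonlinear maps interpolate.

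The paper avoids both issues by working on the $J$-side instead of the $K$-side. One first takes a discrete representation $\xi=\sum_{\nu}\xi^{(\nu)}$ with $J(\xi^{(\nu)},2^\nu)\le 4K(\xi,2^\nu)$ (the fundamental lemma), then applies a \emph{piecewise} Davis-type decomposition to each $\xi^{(\nu)}$ (from \cite[Theorem~3.1]{Ran-Wu-Xu}) to obtain $\xi^{(\nu)}=y^{(\nu)}+z^{(\nu)}$ with $J$-functional control at scale $2^\nu$. Summing and invoking Theorem~\ref{interpolation-adapted} and Proposition~\ref{interpolation-conditioned} then gives the Lorentz bound. The point is that the nonlinearity is absorbed into the individual pieces $\xi^{(\nu)}$, and the global interpolation is handled by the linear superposition inherent in the $J$-method.
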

\begin{proof}
The key ingredients for the proof are the decomposition from \cite[Theorem~3.1]{Ran-Wu-Xu} and the two interpolations results from Proposition~\ref{interpolation-conditioned} and Theorem~\ref{interpolation-adapted}.

Fix 
$\xi \in L_{2/3}^{\rm ad}(\M;\ell_2^c)  \cap L_2^{\rm ad}(\M;\ell_2^c)$. 
 Choose a discrete representation of $\xi$ with respect to the couple $(L_{2/3}^{\rm ad}(\M;\ell_2^c),  L_2^{\rm ad}(\M;\ell_2^c))$,
 \[
\xi= \sum_{\nu \in \Z} \xi^{(\nu)}
\]
with $\xi^{(\nu)}  \in  L_{2/3}^{\rm ad}(\M;\ell_2^c) \cap  L_2^{\rm ad}(\M;\ell_2^c)$ for every $\nu \in  \Z$, the series is convergent in $L_{2/3}^{\rm ad}(\M;\ell_2^c) +  L_2^{\rm ad}(\M;\ell_2^c)$,  and  
so that 
\[
J\big( \xi^{(\nu)}, 2^\nu, L_{2/3}^{\rm ad}(\M;\ell_2^c); L_{2}^{\rm ad}(\M;\ell_2^c)\big) \leq 4 K\big( \xi, 2^\nu, L_{2/3}^{\rm ad}(\M;\ell_2^c); L_2^{\rm ad}(\M;\ell_2^c)\big). 
\]
We refer to \cite[Lemma~3.3.2]{BL} for the existence of a representation  satisfying the properties discribed above.

For each $\nu\in \mathbb{Z}$,  we apply \cite[Theorem~3.1]{Ran-Wu-Xu} to the adapted sequence $\xi^{(\nu)}$: there exists two adapted sequences $y^{(\nu)}$ and $z^{(\nu)}$  such that $\xi^{(\nu)}=y^{(\nu)} + z^{(\nu)}$,
\[
J\big( y^{(\nu)}, 2^\nu, L_{2/3}(\M\overline{\otimes} \ell_\infty;  L_{2}(\M\overline{\otimes} \ell_\infty))\big)\lesssim J\big( \xi^{(\nu)}, 2^\nu, L_{2/3}^{\rm ad}(\M;\ell_2^c); L_2^{\rm ad}(\M;\ell_2^c)\big), 
\]
and
\[
J\big( z^{(\nu)}, 2^\nu, L_{2/3}^{\rm cond}(\M;\ell_2^c); L_2^{\rm cond}(\M;\ell_2^c)\big)
\lesssim  J\big( \xi^{(\nu)}, 2^\nu, L_{2/3}^{\rm ad}(\M;\ell_2^c); L_2^{\rm ad}(\M;\ell_2^c)\big).  
\]
It follows that for $0<\theta<1$ and $0<q\leq \infty$, we have:
\begin{equation}\label{J-diag}
\big\| (J(y^{(\nu)}, 2^\nu ))_\nu\big\|_{\lambda^{\theta,q}}\lesssim_{\theta,q}  \big\| (K(\xi, 2^\nu ))_\nu\big\|_{\lambda^{\theta,q}}
\end{equation}
and
\begin{equation}\label{J-cond}
\big\| (J(z^{(\nu)}, 2^\nu ))_\nu\big\|_{\lambda^{\theta,q}}\lesssim_{\theta,q} \big\| (K(\xi, 2^\nu ))_\nu \big\|_{\lambda^{\theta,q}}
\end{equation}
 where for a given sequence $(a_\nu)_{\nu}$ of scalars, we use the quasi-norm 
\[
\big\| (a_\nu)_{\nu}\big\|_{\lambda^{\theta,q}}:= \Big( \sum_{\nu  \in  \Z} \big(2^{-\nu\theta} |a_\nu|\big)^q\Big)^{1/q}.
\]
Let  $y= \sum_{\nu \in \Z} y^{(\nu)}$  and $z=\sum_{\nu \in \Z} z^{(\nu)}$.  It is clear $y$ and $z$ are adapted sequences and $\xi=y+z$.  
For any given $2/3<p<2$ and $0<q\leq \infty$,  choose $\theta=3/2-1/p$.
It  clearly follows from \eqref{J-diag}  and Theorem~\ref{interpolation-adapted} that:
\[
\big\|y \big\|_{L_{p,q}(\M\overline{\otimes} \ell_\infty)} \lesssim_{p,q}  \|\xi\|_{L_{p,q}^{\rm ad}(\M;\ell_2^c)}.
\]
Similarly,  we may deduce from \eqref{J-cond} and Proposition~\ref{interpolation-conditioned} that
\[
\big\|z \big\|_{L_{p,q}^{\rm cond}(\M ; \ell_2^c)} \lesssim_{p,q}  \|\xi\|_{L_{p,q}^{\rm ad}(\M;\ell_2^c}.
\]
The desired inequality follows from combining the last two inequalities.
\end{proof}
We isolate the following  important example. Motivated by  the  noncommutative Khintchine inequality for weak-$L_1$ spaces (\cite{Cadilhac2}),  we state below   a weak-$L_1$-version of the Davis-decomposition for adapted sequences. This appears to be new even for the classical setting.
\begin{example}
There exists a constant $C>0$  so that  for every adapted sequence $\xi \in L_{1,\infty}(\M;\ell_2^c)$, there exist  two adapted sequences  $y$ and $z$ such that:
\begin{enumerate}[{\rm (i)}]
\item $\xi=x +z$;
\item $\big\| y\big\|_{L_{1,\infty}(\M \overline{\otimes} \ell_\infty)} + \big\| z \big\|_{L_{1,\infty}^{\rm cond}(\M ; \ell_2^c)} \leq  C \big\| \xi \big\|_{L_{1,\infty}(\M;\ell_2^c)}$.
\end{enumerate}
\end{example}
It is worth pointing out   that  the preceding example  allows us to deduce the noncommutative  weak-type $(1,1)$ version of the Burkholder/Rosenthal 
(\cite[Theorem~3.1]{Ran21}) from  the simpler  weak-type inequality involving  square functions given in  \cite[Theorem~2.1]{Ran16}.

\medskip

The idea used in the proof of Proposition~\ref{simultaneous} can be extended for the case of general symmetric spaces for the Banach space range. More precisely, we  have the following result:

\begin{proposition}\label{Davis-symmetric}
Let $E$ be a Banach function space. 
If  $E\in {\rm Int}(L_1,L_q)$ for $1<q<2$, then 
\[
E^{\rm ad}(\M;\ell_2^c) =  E(\oplus_{n=1}^\infty \M_n) +  E^{\rm cond, ad}(\M;\ell_2^c)
\]
where  $ E^{\rm cond, ad}(\M;\ell_2^c)$ denotes is the subspace of $E^{\rm cond}(\M;\ell_2^c)$ consisting of adapted sequences.
\end{proposition}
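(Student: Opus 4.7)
The plan is to mimic the proof of Proposition~\ref{simultaneous} but to replace the single Lorentz parameter $(p,q)$ with a Banach function space parameter. By Proposition~\ref{K-method}, since $E$ is Banach and belongs to ${\rm Int}(L_1, L_q)$, I fix a Banach function space $\mathcal{F}$ on $(0,\infty)$ such that $E = (L_1, L_q)_{\mathcal{F};K}$; by the Brudnyi--Krugliak equivalence theorem, available for Banach couples, the same intermediate space is then also given by an equivalent $J$-method representation $E = (L_1, L_q)_{\widetilde{\mathcal{F}};J}$.

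Combining this with Theorem~\ref{interpolation-adapted} (which in the Banach range $p \geq 1$ also follows from the complementation of adapted sequences via the noncommutative Stein inequality), with Proposition~\ref{interpolation-conditioned} restricted to the adapted subspace, and with the standard description of $E(\oplus_{n=1}^\infty \M_n)$ as the corresponding interpolation space of the couple $(L_1(\oplus_n \M_n), L_q(\oplus_n \M_n))$, the three target spaces
\[
E^{\rm ad}(\M;\ell_2^c), \quad E^{\rm cond, ad}(\M;\ell_2^c), \quad E(\oplus_{n=1}^\infty \M_n)
\]
are simultaneously produced by both $K$- and $J$-methods, with the same parameter $\mathcal{F}$ (up to equivalent norms), from the couples $(L_1^{\rm ad}, L_q^{\rm ad})$, $(L_1^{\rm cond, ad}, L_q^{\rm cond, ad})$ and $(L_1(\oplus_n \M_n), L_q(\oplus_n \M_n))$ respectively.

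Given $\xi \in E^{\rm ad}(\M;\ell_2^c)$, I would then fix a discrete $J$-representation $\xi = \sum_{\nu \in \Z} \xi^{(\nu)}$ with $\xi^{(\nu)} \in L_1^{\rm ad}(\M;\ell_2^c) \cap L_q^{\rm ad}(\M;\ell_2^c)$ and
\[
J\bigl(\xi^{(\nu)}, 2^\nu; L_1^{\rm ad}, L_q^{\rm ad}\bigr) \leq 4\, K\bigl(\xi, 2^\nu; L_1^{\rm ad}, L_q^{\rm ad}\bigr),
\]
exactly as in the proof of Proposition~\ref{simultaneous}. To each $\xi^{(\nu)}$ I apply the Davis-type decomposition of \cite[Theorem~3.1]{Ran-Wu-Xu} (now at the couple $(L_1, L_q)$ rather than $(L_{2/3}, L_2)$), producing adapted sequences $y^{(\nu)}, z^{(\nu)}$ with $\xi^{(\nu)} = y^{(\nu)} + z^{(\nu)}$, $y^{(\nu)}$ diagonal and $z^{(\nu)}$ conditioned, such that
\[
J\bigl(y^{(\nu)}, 2^\nu; L_1(\oplus_n \M_n), L_q(\oplus_n \M_n)\bigr) + J\bigl(z^{(\nu)}, 2^\nu; L_1^{\rm cond, ad}, L_q^{\rm cond, ad}\bigr) \lesssim J\bigl(\xi^{(\nu)}, 2^\nu; L_1^{\rm ad}, L_q^{\rm ad}\bigr).
\]
Setting $y = \sum_{\nu \in \Z} y^{(\nu)}$ and $z = \sum_{\nu \in \Z} z^{(\nu)}$, both series converge in the respective target spaces, $y$ and $z$ are adapted, $\xi = y + z$, and the $J$-method description of the two target spaces immediately yields
\[
\|y\|_{E(\oplus_n \M_n)} + \|z\|_{E^{\rm cond, ad}(\M;\ell_2^c)} \lesssim \|\xi\|_{E^{\rm ad}(\M;\ell_2^c)}.
\]

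The main obstacle I expect is verifying that the Davis-type decomposition of \cite{Ran-Wu-Xu} applies uniformly to the couple $(L_1, L_q)$ for all $1 < q < 2$, with $J$-functional bounds at each dyadic scale, rather than only to the specific Lorentz couple it was originally stated for; this should be a scaling argument internal to that proof. The two further technical points, namely the Brudnyi--Krugliak equivalence of $K$- and $J$-methods with a common Banach parameter, and the fact that on the conditioned-adapted side the interpolation identity passes from $L_p^{\rm cond}$ to its adapted subspace (via conditional-expectation projections), are routine in the Banach setting and require no new ideas beyond those already in the preceding subsections.
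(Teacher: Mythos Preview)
Your approach is correct and essentially identical to the paper's: take a discrete $J$-representation of $\xi$ relative to the couple $(L_1^{\rm ad}, L_q^{\rm ad})$, apply the simultaneous Davis-type splitting from \cite{Ran-Wu-Xu} at each dyadic scale, then reassemble and invoke the equivalence of $K$- and $J$-methods with parameter $\mathcal{F}$ (valid for Banach couples) to land in the target spaces.

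Two small corrections. First, to identify $E^{\rm ad}(\M;\ell_2^c)$ and $E^{\rm cond}(\M;\ell_2^c)$ with the $\mathcal{F};K$ interpolation spaces of the corresponding $(L_1,L_q)$ couples you should invoke Proposition~\ref{int-general} (i.e.\ the $K$-functional comparison of Proposition~\ref{key-lemma-ad}); Theorem~\ref{interpolation-adapted} only gives the $(\theta,\gamma)$ form, and your parenthetical appeal to Stein fails at the endpoint $p=1$---which is exactly why the restriction $1<q<2$ (so that $\nu=2$ applies in Proposition~\ref{int-general}) is needed. Second, there is no need to restrict the conditioned couple to its adapted subspace: the paper works with the full $(L_1^{\rm cond}, L_q^{\rm cond})$ and simply notes that each $z^{(\nu)}$ produced by the Davis decomposition is already adapted, hence so is $z$.
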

\begin{proof}[Sketch of the proof]
It is easy to see that under the assumption, $E(\oplus_{n=1}^\infty \M_n)\subseteq E^{\rm ad}(\M;\ell_2^c) $.  On the other hand, we have from Theorem~\ref{reverse-cond}  that
$E^{\rm cond, ad}(\M;\ell_2^c) \subseteq E^{\rm ad}(\M;\ell_2^c)$. Thus,  we only need to verify one inequality.

The proof rests upon few facts. The interpolation  space  $E$ is given by a  $K$-method, the simultaneous nature of Proposition~\ref{simultaneous} above, and Proposition~\ref{int-general}.
 
Since  $E$ is   given by  a $K$-method, we may fix a   Banach function space $\cal{F}$  such that  for any given semifinite von Neumann algebra $\cal N$,
\[
C_E^{-1}\big\|a \big\|_{\cal{F} ; K} \leq \big\|a \big\|_{E(\cal N)}  \leq \big\|a \big\|_{\cal{F}; K}\ , \quad a\in E(\cal N).
\]
Under the assumption $1<q<2$ and Proposition~\ref{int-general}, we may also state that for every sequence $\xi \in E^{\rm ad}(\M;\ell_2^c)$,
\[
\big\| \xi\big\|_{\big(L_1^{\rm ad}(\M;\ell_2^c); L_q^{\rm ad}(\M;\ell_2^c)\big)_{\cal{F};K}} \approx_E
\big\|\xi\big\|_{E^{\rm ad}(\M;\ell_2^c)}.
\]
Similarly,  for every    $\zeta  \in E^{\rm  cond }(\M;\ell_2^c)$,
\[
\big\| \zeta\big\|_{\big(L_1^{\rm cond}(\M;\ell_2^c); L_q^{\rm cond}(\M;\ell_2^c)\big)_{\cal{F};K}} \approx_{E}
\big\|\zeta \big\|_{E^{\rm cond}(\M;\ell_2^c)}.
\]
We now outline the argument. Fix $\xi \in  L_1^{\rm ad}(\M;\ell_2^c) \cap   L_q^{\rm ad}(\M;\ell_2^c)\big)$.   Repeating  the argument in the proof of Proposition~\ref{simultaneous} (taking into account the fact that the decomposition in Proposition~\ref{simultaneous} works simultaneously), we obtain a decomposition $\xi=y +z$ where $y=\sum_{\nu \in \Z} y^{(\nu)}$ and $z=\sum_{\nu \in \Z} z^{(\nu)}$
are representations with respect to the couple $(L_1(\M\overline{\otimes}\ell_\infty),L_q(\M\overline{\otimes}\ell_\infty))$ and $(L_1^{\rm cond}(\M; \ell_2^c),L_q^{\rm cond}(\M;\ell_2^c))$ respectively  and further  satisfy that for every $\nu \in \Z$,
\[
J\big( y^{(\nu)}, 2^\nu, L_1(\M\overline{\otimes}\ell_\infty); L_q(\M \overline{\otimes} \ell_\infty)\big)  \lesssim_{q} K\big( \xi, 2^\nu, L_1^{\rm ad}(\M;\ell_2^c); L_q^{\rm ad}(\M;\ell_2^c)\big) 
\]
and
\[
J\big( z^{(\nu)}, 2^\nu, L_1^{\rm cond}(\M;\ell_2^c); L_q^{\rm cond}(\M;\ell_2^c)\big)
\lesssim_{q}  K\big( \xi, 2^\nu, L_1^{\rm ad}(\M;\ell_2^c); L_q^{\rm ad}(\M;\ell_2^c)\big).  
\]
 Consider the following  functions defined on the semi-axis $(0,\infty)$:
 \[
 f(t) = J(y^{(\nu)}, 2^\nu)\quad \text{for} \ t \in [2^\nu, 2^{\nu+1}),
 \]
\[
 g(t) = J(z^{(\nu)}, 2^\nu)\quad \text{for} \ t \in [2^\nu, 2^{\nu+1}),
\]
and
\[
h(t)=K(\xi, 2^\nu)\quad \text{for} \ t \in [2^\nu, 2^{\nu+1}).
\] 
It follows that  for every $t>0$, 
\[
\max\big\{ f(t); g(t) \big\} \lesssim_{q}  h(t).
\]
Taking the norms on the function space $\cal{F}$, we have
\[
\big\|f\big\|_{\cal{F}} + \big\|g\big\|_{\cal{F}} \lesssim_q \big\|h\big\|_{\cal{F}}.
\]
From the definitions of the three functions, we further get that:
\begin{equation}\label{J-K}
\|y\|_{(L_1(\M\overline{\otimes}\ell_\infty), L_q(\M\overline{\otimes}\ell_\infty)_{\cal{F};J}}  +
\|z\|_{(L_1^{\rm cond}(\M;\ell_2^c); L_q^{\rm cond}(\M;\ell_2^c))_{\cal{F};J}}
\lesssim_q
\|\xi\|_{(L_1^{\rm ad}(\M;\ell_2^c); L_q^{\rm ad}(\M;\ell_2^c))_{\cal{F};K}}.
\end{equation}
From the equivalence of the $J$-methods and $K$-methods relative the function space $\cal{F}$ (see for instance, \cite[Theorem~2.9]{KPS}) and the equivalence of norms stated at the beginning of the proof, we may conclude that:
\[
\|y\|_{E(\M\overline{\otimes} \ell_\infty)} + \|z\|_{E^{\rm cond}(\M;\ell_2^c)} \lesssim_E
\|\xi\|_{E^{\rm ad}(\M;\ell_2^c)}
\] 
which is the desired inequality.
\end{proof}

\begin{remark} The argument in the proof of Theorem~\ref{Davis-symmetric} can be carried out for the larger class $E\in {\rm Int}(L_p, L_q)$  with $2/3<p<q<2$ to get that every $\xi \in L_{p}^{\rm ad}(\M;\ell_2^c) \cap L_{q}^{\rm ad}(\M;\ell_2^c)$ admits a decomposition 
into two adapted sequences  $y$ and $z$ satisfying:
\[
\|y\|_{(L_p(\M\overline{\otimes}\ell_\infty), L_q(\M\overline{\otimes}\ell_\infty)_{\cal{F};J}}  +
\|z\|_{(L_p^{\rm cond}(\M;\ell_2^c); L_q^{\rm cond}(\M;\ell_2^c))_{\cal{F};J}}
\lesssim_{E}
\|\xi\|_{ E^{\rm ad}(\M;\ell_2^c)}.
\] 
However, we do not know if the equivalence of  the $J$-method and  the $K$-method relative to the function space $\cal{F}$ is valid for the case of quasi-Banach couples.
\end{remark}

 As an  immediate application  of Proposition~\ref{Davis-symmetric},  we deduce the next result   which partially answers a problem  from \cite[Remark~3.11]{Ran-Wu-Xu}. 
\begin{corollary}\label{davis}
Let $E$ be a Banach function space.
If $E \in {\rm Int}(L_1, L_q)$  for $1<q<2$,  then  
\[
\H_E^c(\M) = \h_E^d(\M) + \h_E^c(\M).
\]
\end{corollary}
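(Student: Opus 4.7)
The plan is to deduce this from Proposition~\ref{Davis-symmetric} applied to the martingale difference sequence of $x$, paired with a Rotfeld-concavity argument for the reverse direction. The two inclusions to check are $\H_E^c(\M) \subseteq \h_E^d(\M) + \h_E^c(\M)$ (the substantive one) and $\h_E^d(\M) + \h_E^c(\M) \subseteq \H_E^c(\M)$; the latter splits into two endpoint-style embeddings discussed below.

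For the forward inclusion, take $x\in \H_E^c(\M)$ so that $(dx_n) \in E(\M;\ell_2^c)$. Since $(dx_n)$ is adapted, it lies in $E^{\rm ad}(\M;\ell_2^c)$, and Proposition~\ref{Davis-symmetric} produces adapted sequences $y$ and $z$ with $dx_n = y_n + z_n$ and
\[
\|y\|_{E(\oplus_n \M_n)} + \|z\|_{E^{\rm cond}(\M;\ell_2^c)} \lesssim_E \|x\|_{\H_E^c}.
\]
The sequences $y$ and $z$ are merely adapted, so I define martingales $\tilde y$ and $\tilde z$ through $d\tilde y_n = y_n - \E_{n-1}(y_n)$ and $d\tilde z_n = z_n - \E_{n-1}(z_n)$. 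Because $\E_{n-1}(dx_n)=0$, the identity $dx_n = d\tilde y_n + d\tilde z_n$ holds, giving $x = \tilde y + \tilde z$. Lemma~\ref{complemented} applied to the adapted $z$ yields $\|\tilde z\|_{\h_E^c} \leq \|z\|_{E^{\rm cond}(\M;\ell_2^c)}$. For $\tilde y$, the shifted conditional-expectation map $(a_n)\mapsto(\E_{n-1}(a_n))$ acting diagonally on $E(\oplus_n \M_n)$ is a contraction on every $L_p(\M\overline{\otimes}\ell_\infty)$ for $1\leq p\leq\infty$ (it is the diagonal restriction of a conditional expectation on $\M\overline{\otimes} B(\ell_2)$), hence bounded on $E(\oplus_n \M_n)$ by interpolation; this yields $\|\tilde y\|_{\h_E^d}\lesssim_E \|y\|_{E(\oplus_n\M_n)}$.

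For the reverse inclusion, $\h_E^c(\M) \hookrightarrow \H_E^c(\M)$ is exactly the first inequality of the preceding theorem applied with $F=E$ (valid since $E \in {\rm Int}(L_1,L_q)$ with $q<2$). The inclusion $\h_E^d(\M) \hookrightarrow \H_E^c(\M)$ uses that for $p\leq 2$, Rotfeld's trace inequality applied to the operator concave function $t\mapsto t^{p/2}$ gives
\[
\|x\|_{\H_p^c}^p = \T\big((\textstyle\sum_k|dx_k|^2)^{p/2}\big) \leq \sum_k \T(|dx_k|^p) = \|x\|_{\h_p^d}^p,
\]
so $\h_p^d(\M) \hookrightarrow \H_p^c(\M)$ is continuous for $p=1$ and $p=q$. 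I then extend to $E$ via the $K$-method representation $E=(L_1,L_q)_{\cal F;K}$ from Proposition~\ref{K-method}, interpolating the identity map between the ambient couples $(L_p(\M\overline{\otimes}\ell_\infty))_{p\in\{1,q\}}$ (housing the diagonals $(dx_n)$) and $(L_p(\M))_{p\in\{1,q\}}$ (housing $S_c(x)$).

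The main obstacle will be verifying that the endpoint Rotfeld inclusion interpolates cleanly to the level of $E$; this is where the $K$-method formalism from Proposition~\ref{K-method} is used, by viewing $\h_E^d(\M)$ as the subspace of $E(\M\overline{\otimes}B(\ell_2))$ generated by martingale-difference diagonals and $\H_E^c(\M)$ as a subspace of $E(\M;\ell_2^c)$. Everything else amounts to routine conversion between adapted sequences and martingale differences through the contractive projection $\Pi$.
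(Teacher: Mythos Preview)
Your proof is correct and the forward inclusion follows the paper's argument essentially verbatim: apply Proposition~\ref{Davis-symmetric} to the adapted sequence $(dx_n)$, then project to martingale differences via $a_n\mapsto a_n-\E_{n-1}(a_n)$, controlling the diagonal piece by interpolating the shifted conditional expectation on $L_p(\oplus_n\M_n)$ and the conditioned piece by the Kadison--Schwarz inequality (which is precisely the content of Lemma~\ref{complemented}). The paper simply declares the reverse inclusion as the easy direction without proof; your Rotfeld-plus-interpolation argument for $\h_E^d\hookrightarrow\H_E^c$ and your citation of the preceding theorem for $\h_E^c\hookrightarrow\H_E^c$ fill that in correctly.
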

\begin{proof} We only need  to verify  the inclusion $\H_E^c(\M) \subseteq \h_E^d(\M) + \h_E^d(\M)$. That is to show the existence of a constant $\a_E$  such that for every $x \in \H_E^c(\M)$, the following holds:
\[
\inf\left\{ \big\|x^d\big\|_{\h_E^d} + \big\|x^c\big\|_{\h_E^c} \right\} \leq \a_E \big\|x \big\|_{\H_E^c}
\]
where the infimum is taken over all $x^d \in \h_E^d(\M)$ and $x^c \in \h_E^c(\M)$ such that $x=x^d + x^c$.

Let $\xi=(dx_n)_{n\geq 1} \in E^{\rm ad}(\M;\ell_2^c)$.
 It is enough to take
martingales $x^c$ and $x^d$ with  for every $n\geq 1$,
\[
dx_n^c= z_n - \E_{n-1}(z_n) \quad  \text{and}  \quad dx_n^d= y_n - \E_{n-1}(y_n), 
\]
where $y$ and $z$ are the adapted sequences from Theorem~\ref{Davis-symmetric}. Clearly, $x=x^d +x^c$. 
Since the map $(a_n)_{n\geq 1}\mapsto (\E_{n-1}(a_n))_{n\geq 1} $ is  a contraction  in $L_p(\oplus_{n=1}^\infty \M_n)$ for every $1\leq p<\infty$,  it follows by interpolation that it is also bounded in $E(\oplus_{n=1}^\infty \M_n)$. In particular,
\[
\|x^d\|_{\h_E^d} \lesssim_E \|y\|_{E(\oplus_{n=1}^\infty \M_n)}.
\]
On the other hand,  since for every $n\geq 1$, $\E_{n-1}|dx_n^c|^2 \leq \E_{n-1}|z_n|^2$, we immediately get  that 
\[
\|x^c\|_{\h_E^c} \leq \|z\|_{E^{\rm cond}(\M;\ell_2^c)}. 
\]
 Combining these two inequalities, we  arrive at
 \[
 \|x^d\|_{\h_E^d} + \|x^c\|_{\h_E^c}  \lesssim_E \|\xi\|_{E^{\rm ad}(\M;\ell_2^c)} =\|x\|_{\H_E^c}.
 \]
 The proof is complete.
 \end{proof}

We recall that  the conclusion of Corollary~\ref{davis} also applies to interpolation space   $E \in {\rm Int}(L_p, L_2)$ for $1<p<2$ (see \cite[Theorem~3.9]{Ran-Wu-Xu}).
We suspect that the preceding  corollary is valid  for any Banach function space in ${\rm Int}(L_1, L_2)$ but our method  is restricted to  $1<q<2$ (see also \cite[Problem~4.2]{RW} for a related question). 

As examples of spaces that  are not covered by previously known results, we consider the general Lorentz space $\Lambda_{1, w}$ where $w$ is a positive  decreasing function on  $(0,\infty)$ with $\int_0^\infty w(t)\ dt=\infty$. The Lorentz space $\Lambda_{1,w}$ is  the linear space consisting of all  $ f\in  L_0$  such  that
\[
\big\|f\big\|_{\Lambda_{1,w}} =\int_0^\infty \mu_t(f)  w(t)\ dt <\infty.
\]
The space $(\Lambda_{1,w}, \|\cdot\|_{\Lambda_{1,w}})$ is a  fully symmetric Banach function space. According to \cite{Kaminska-Maligranda-Persson}, $\Lambda_{1,w}$ is $r$-concave if and only  if 
(for $1/r +1/r'=1$), 
\[
\Big( \frac{1}{t} \int_0^t w(s)^{r'} \ ds \Big)^{1/r'} \lesssim w(t), \quad t>0.
\]
With the preceding criterion, one can isolate the family of  weights $w$ for which $\Lambda_{1,w} \in {\rm Int}(L_1, L_q)$ for some $1<q<2$ and therefore the Davis decomposition applies to  martingales from the corresponding Hardy spaces $\H_{\Lambda_{1,w}}^c(\M)$.

\bigskip 

 \noindent{\bf Acknowledgments.} I am grateful to  the  anonymous referee  for a careful reading of the paper and  for  useful comments  which improved the presentation of the paper.




\def\cprime{$'$}
\providecommand{\bysame}{\leavevmode\hbox to3em{\hrulefill}\thinspace}
\providecommand{\MR}{\relax\ifhmode\unskip\space\fi MR }
\providecommand{\MRhref}[2]{%
  \href{http://www.ams.org/mathscinet-getitem?mr=#1}{#2}
}
\providecommand{\href}[2]{#2}

\end{document}